\documentclass[11pt]{amsart}
\usepackage{amsmath,amsthm,latexsym,amssymb}
\usepackage{enumerate}
\usepackage{graphicx}
\usepackage{color}
\usepackage{epsfig}
\headsep .5cm
\textheight 20cm
\textwidth 16cm
\topskip .6cm
\baselineskip 1cm
\oddsidemargin .96cm
\evensidemargin -0.04cm

\numberwithin{equation}{section}

\newtheorem{theorem}{Theorem}
\newtheorem{claim}{Claim}
\newtheorem{lemma}{Lemma}
\newtheorem{corollary}{Corollary}
\newtheorem{proposition}{Proposition}
\newtheorem{remark}{Remark}
\newtheorem{definition}{Definition}
\newtheorem{quest}{Question}

\numberwithin{theorem}{section}
\numberwithin{corollary}{section}
\numberwithin{lemma}{section}
\numberwithin{definition}{section}
\numberwithin{proposition}{section}
\numberwithin{remark}{section}
\newcommand{\eps}{\varepsilon}

\newcommand{\R}{\mathbb R}
\newcommand{\N}{\mathbb N}

\newcommand{\medint}{-\kern  -,375cm\int}

\def\dist{\mathrm {dist}}
\def\sign{\mathrm {sign}}

\def\boldx{\mbox{\boldmath$x$}}

\def\boldy{\mbox{\boldmath$y$}}

\def\boldmu{\mbox{\boldmath$\mu$}}

\def\boldm{\mbox{\boldmath$m$}}
\def\boldtau{\mbox{\boldmath$\tau$}}

\title[\textsf{The longest shortest fence and sharp Poincar\'e-Sobolev inequalities}]{The longest shortest fence
\vskip.3cm
and sharp Poincar\'e-Sobolev inequalities}
\author[L. Esposito - V. Ferone - B. Kawohl - C. Nitsch - C. Trombetti]{L. Esposito - V. Ferone - B. Kawohl - C. Nitsch - C. Trombetti}

\subjclass{49J35, 53A10, 49Q10, 49Q20, 52A40}


\begin{document}

\begin{abstract}
We prove a long standing conjecture concerning the fencing problem in the plane: among planar 
convex sets of given area, prove that the disc, and only the disc maximizes the length of the shortest
area-bisecting curve. Although it may look intuitive, the result is by no means trivial since we also prove 
 that among  planar convex sets of given area the set which maximizes the length of the shortest bisecting chords
is the so-called \emph{Auerbach triangle}.
\end{abstract}

\maketitle

\section{Introduction}

The aim of this paper is to obtain sharp inequalities involving quantities which are related to the best way of halving 
convex sets in the plane. Problems of this type are called fencing problems, because they model the division of a piece of land into two pieces of equal area. The boundary between the two pieces can then be marked with a fence.
One of the first results in this context is contained in \cite{A}, where the author 
answers some questions posed by M. Ulam in the 1930's concerning the equilibrium position of a cylinder floating in water. 
Subsequently a large amount of literature has been devoted to the study of the length of the arcs 
(straight segments or curves) which divide a convex set in such a way that the area or the perimeter  of the set is bisected 
(see, e.g., \cite{E}, \cite{FP}, \cite{Go}, \cite{Gr}, \cite{GKMS}, \cite{Pa}, \cite{P}, \cite{Ra}, \cite{Rb}, \cite{Sa}).

We will focus our attention mainly on area-bisecting arcs, that is, on curves which split a set into two subsets of equal area.
More precisely, we are motivated by a question posed by P\'olya more than 50 years ago, which has been recently restated, for
example, in \cite[Question 4.3]{Z}. 
\begin{quest}\label{quest1}
In the class of planar convex sets having fixed area, which set maximizes the length of the shortest 
area-bisecting arc?
\end{quest}
P\'olya himself observed in \cite{P} that, if 
$K$ is a convex centrosymmetric set, and if we denote by $|K|$ its area, then an upper bound on the length $L$ of 
the shortest bisecting curve is given by 
\begin{equation}\label{disc}
L\leq 2\sqrt{|K|/\pi}
\end{equation}
i.e., the diameter of a circular disc of area $|K|$. This estimate is sharp only for the disc and provides 
a partial answer to Question \ref{quest1}. Later the same result was found independently by Cianchi \cite{C}.

Note that a set $K$ is centrosymmetric (with respect to its center $0$) if $x\in K$ implies $-x\in K$. 
Following the proof by P\'olya, if $K$ is simply connected and centrosymmetric  and $\bar x\in \partial K$, then 
the chord delimited by $\bar x$ and $-\bar x$ bisects $K$. Inequality \eqref{disc} follows from the fact that
there exists $\bar x\in\partial K$ such that $|\bar x|\le \sqrt{|K|/\pi}$. Indeed, 
either $K$ is a disc or the disc centered at the origin and having same area as $K$ cannot be contained in $K$.
As a matter of fact, it is worth noticing that 
with a slightly different proof, this result holds true in the whole class of measurable centrosymmetric sets, see Proposition \ref{centrosym} at the end of the paper.

Although the restriction to the centrosymmetric case may seem too tight, it suggests that the disc 
provides the answer to Question \ref{quest1}. Until now, however, for general convex domains the best upper 
bound has been obtained as a consequence of Pal's Theorem 
\cite{Pa} (see also \cite[p.37f]{CFG}, \cite[Table 2.1]{SA}, \cite[Ch. 4]{Gr}) and reads 
\begin{equation}\label{sant}
L\leq w(K)\leq  3^{1/4}A^{1/2},
\end{equation}
where $w(K)$ is the width of $K$ (see, e.g. \cite{Sc}). Recall that the width 
of a convex set $K$ is the minimal 
distance between two parallel tangents to $\partial K$. Equality holds in the second inequality of \eqref{sant}  only for an equilateral triangle; therefore the bound on $L$
is not sharp.

The restriction to the centrosymmetric case is also somewhat misleading since one may believe that working with chords instead of curves could be sufficient to answer Question \ref{quest1}. Let us therefore consider the following simpler problem.
\begin{quest}\label{quest2}
In the class of convex sets having fixed area, which set maximizes the length of the shortest bisecting chord?
\end{quest}
This question was posed in \cite[Problem A26]{CFG} where, according to the authors, Santal\'o asked whether the disc is the answer. Actually, Auerbach \cite{A}, by means of an interesting application of Fourier series, already provided a class of set, namely Zindler sets, which give a negative answer to Santal\'o's question. A Zindler set is 
a plane (not necessarily convex) set with the property that all of its (area-)bisecting chords have the same length. Moreover, all area-bisecting chords are also perimeter-bisecting. Clearly the disk belongs to this class, but there are also other domains with this property. In particular, Auerbach focuses on a particular Zindler set, 
now called \emph{Auerbach triangle}, and conjectures that this set has the longest bisecting chord in the class of convex Zindler sets of given area. Auerbach's conjecture has only recently been settled  by Fusco and Pratelli in \cite{FP}.
In view of this result, the Auerbach triangle becomes the natural candidate to answer Question \ref{quest2} 
(see \cite{FP}). Notice that the word \emph{Zindler} is missing between \emph{convex} and \emph{sets} Êin Question 2.

However, it is not difficult to show (see Remark \ref{remark_auerb}) that the shortest area-bisecting arc for the Auerbach triangle is not a straight segment but a circular arc. As a consequence, if the Auerbach triangle provides the answer to Question \ref{quest2}, then one cannot answer Question \ref{quest1} working with chords instead of curves, a fact which makes Question \ref{quest1} even more intriguing.

Indeed, in the present paper we prove
that the disc answers Question \ref{quest1} while the Auerbach triangle answers Question \ref{quest2}.

 In order to give the precise statement of our main results, instead of considering curves splitting $K$ into two subsets 
 of equal area, it is much more convenient to focus attention on the class of subsets of $K$ having area $|K|/2$. 
 More precisely, for any given $E\subset K$, such that $|E|=|K|/2$, the relative perimeter $Per(E;K)$ of $E$ 
 with respect to $K$ represents, in a suitable weak sense, the length of a curve that splits $K$ into two parts of equal area. 
 Taking advantage of this duality between splitting curves and subsets,
 our first main result reads as follows. 

 \begin{theorem}
 \label{main} If $K$ is an open convex set of $\R^2$, we have:
 \begin{equation}\label{mainineq}
 \mathop{\inf_{G \subset K}}_{|G| =|K|/2}Per(G;K)^2\le\frac4\pi|K|.
 \end{equation}
Moreover, equality holds in {\rm(\ref{mainineq})}  if and only if $K$ is a disc.
 \end{theorem}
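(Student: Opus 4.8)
The plan is to reduce the inequality to the centrosymmetric case by a symmetrization/reflection argument, and then to characterize equality. The key quantity is $F(K):=\inf\{Per(G;K)^2 : G\subset K,\ |G|=|K|/2\}$, which we want to bound by $\frac{4}{\pi}|K|$. First I would establish that $F$ is upper semicontinuous (or at least behaves well) under the relevant approximations, so that it suffices to prove the bound for, say, smooth strictly convex $K$; for such $K$ a minimizing $G$ exists and its relative boundary $\partial G\cap K$ is a curve of constant curvature (a circular arc or a segment) meeting $\partial K$ orthogonally, by the standard first-variation analysis for the relative isoperimetric problem. This regularity is what lets us talk about an actual bisecting arc $\gamma$ of length $L=Per(G;K)$.

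The heart of the argument should be to exploit a well-chosen chord or diameter of $K$. The idea suggested by the excerpt is: for a general convex $K$, one wants to find a direction and a subset $G$ whose relative boundary is short. I would look for an area-bisecting chord of $K$ and use it together with P\'olya's centrosymmetric estimate \eqref{disc} applied to a symmetrized body. Concretely, pick a bisecting chord $\ell$ of $K$ in some direction; reflecting one of the two halves of $K$ across the line of $\ell$ produces a centrosymmetric (in fact axially symmetric) convex set $\widetilde K$ with $|\widetilde K| = |K|$, and a bisecting curve of $\widetilde K$ of length $\widetilde L$ yields, by "folding," a bisecting curve of $K$ of length at most $\widetilde L$ (the part of the curve on the reflected side maps back with the same length, and the constraint that it close up is handled by symmetrizing the competitor to be itself symmetric across $\ell$). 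Applying \eqref{disc} — valid for all measurable centrosymmetric sets by Proposition \ref{centrosym} — to $\widetilde K$ gives $\widetilde L \le 2\sqrt{|\widetilde K|/\pi} = 2\sqrt{|K|/\pi}$, hence $L \le 2\sqrt{|K|/\pi}$, which is exactly \eqref{mainineq}. The main obstacle I anticipate is making the "folding" step rigorous: one must choose the competitor $G$ in $\widetilde K$ to be symmetric with respect to the reflection so that it descends to a genuine subset of $K$ of the right area, and one must check that relative perimeter does not increase under this operation — the inequality $Per(G;K)\le Per(\widetilde G;\widetilde K)$ has to be argued carefully, since pieces of $\partial G$ lying on the fold line $\ell$ contribute to $Per(G;K)$ but may have been interior to $\widetilde G$. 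I expect this forces a slightly different choice: rather than folding, work directly with the symmetric minimizer in $\widetilde K$ and restrict it.

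For the equality case I would argue as follows. Suppose $F(K)=\frac{4}{\pi}|K|$. Running the above reduction, every inequality must be an equality; in particular $\widetilde K$ must be a disc by the equality case of the centrosymmetric estimate (which, by the P\'olya-type proof recalled in the excerpt, holds iff the body is a disc). But $\widetilde K$ being a disc forces the original half of $K$ to be a half-disc for \emph{every} bisecting direction — running this for two non-parallel bisecting chords and using that a convex body all of whose symmetrized halves are half-discs must itself be a disc. Alternatively, and perhaps more cleanly, from the regularity of the minimizer: equality in the relative isoperimetric inequality on $K$ with the disc constant forces the minimizing $G$ to be (a half of) a disc and its free boundary arc to be a diameter-length arc meeting $\partial K$ orthogonally along a full semicircle, which pins $\partial K$ down to be a circle. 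I would present whichever of these two routes the earlier sections have set up machinery for; the rigidity statement "a convex set whose shortest bisecting curve has length $2\sqrt{|K|/\pi}$ is a disc" is the delicate point, and I would isolate it as the crux, reducing it to the strict monotonicity in P\'olya's argument (if $K$ is not a disc, the area-$|K|$ disc centered at the centroid of the relevant symmetric body is \emph{not} contained in it, producing a strictly shorter bisecting chord).
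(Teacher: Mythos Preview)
Your reduction has a genuine gap at its very first step. Reflecting one half of $K$ across a bisecting chord $\ell$ produces a set $\widetilde K$ that is \emph{axially} symmetric (with respect to the line $\ell$), not \emph{centro}symmetric (with respect to a point). These are different notions---an isosceles triangle is axially symmetric but not centrosymmetric---and P\'olya's estimate \eqref{disc}, as well as Proposition~\ref{centrosym}, requires point symmetry. So the inequality $\widetilde L \le 2\sqrt{|\widetilde K|/\pi}$ is simply not available for your $\widetilde K$, and the argument collapses before the folding step is even reached. (Your parenthetical ``centrosymmetric (in fact axially symmetric)'' suggests a conflation of the two.) Moreover, $\widetilde K$ need not be convex: at an endpoint $P$ of $\ell$ where $\partial K$ meets $\ell$ at an angle $\alpha>\pi/2$, the reflected set has interior angle $2\alpha>\pi$ at $P$. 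Even granting convexity, you would be reduced to proving the theorem for axially symmetric convex sets---and there is no reason to expect that case to be any easier than the general one. The folding step you flag as worrisome is also genuinely problematic: the short bisecting chord of $\widetilde K$ supplied by a P\'olya-type argument has no reason to be perpendicular to $\ell$, so it does not descend to a connected curve in $K$ under the reflection.

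The paper's proof is entirely different and considerably more involved. It passes to the scale-invariant functional $\mathcal C(K)=\inf_{0<|G|\le|K|/2} Per(G;K)^2/|G|$, shows a maximizer $K^*$ exists, and proves (Proposition~\ref{CHLlemma}) that $K^*$ must be a \emph{CHL-set}: every boundary point is the endpoint of an optimal bisecting arc. This relies on a delicate lemma (Lemma~\ref{intersect}) that two optimal arcs can cross only once. CHL-sets are then shown to be $C^{1,1}$ and given an explicit parametric representation governed by a single Lipschitz function $\theta(\sigma)$ (Proposition~\ref{CHL-repr}). Finally (Proposition~\ref{miniimality}), the area of a CHL-set with optimal arcs of length $L$ is bounded below by $\pi L^2/4$ via a Fourier-series computation exploiting an antisymmetry constraint on $\theta$, with equality only for the disc. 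None of this machinery is replaceable by a one-step symmetrization; indeed, the fact that for bisecting \emph{chords} the extremal set is the Auerbach triangle rather than the disc (Theorem~\ref{mainauerb}) is a warning that naive symmetry reductions cannot work here.
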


In other words, the above theorem states that any convex set $K$ that is not a disc must have a bisecting curve strictly shorter than the diameter of the disc of area $|K|$.

The second main result reads as follows.
\begin{theorem}
\label{mainauerb}
Only the Auerbach triangle minimizes area in the class of convex sets whose shortest  area-bisecting chord has given length.
\end{theorem}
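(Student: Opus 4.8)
The plan is to reduce Theorem \ref{mainauerb} to the \emph{Zindler case}, where it is the theorem of Fusco and Pratelli \cite{FP}, by showing that a convex set minimizing area among those with a prescribed shortest bisecting chord must itself be a Zindler set. By scaling this is equivalent to maximizing, among convex sets of given area, the length of the shortest bisecting chord, and I will use whichever normalization is convenient. First I would establish existence: for a planar convex body $K$ let $h(K)$ denote the length of its shortest area-bisecting chord. The bisecting chord of $K$ in a prescribed direction $\theta$, hence its length $\ell(\theta)$, depends continuously on $K$ in the Hausdorff metric, uniformly in $\theta$, so $h=\min_\theta\ell(\theta)$ is continuous; moreover $h(K)\le w(K)$, the width of $K$, so the constraint $h(K)\ge\ell_0$ together with an upper bound on the area confines $K$, up to rigid motions, to a compact family. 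By the Blaschke selection theorem the area functional attains its minimum at some $K_0$, and a scaling argument forces $h(K_0)=\ell_0$.

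The key tool is an area identity for $K_0$. For $\theta\in[0,\pi)$ write $c(\theta)$ for the bisecting chord of $K_0$ in direction $u(\theta)=(\cos\theta,\sin\theta)$, $\ell(\theta)$ for its length, $p(\theta)$ for its midpoint, and $d(\theta)=p(\theta)\cdot n(\theta)$ for its signed distance from the origin, where $n(\theta)=u(\theta)^{\perp}$. Differentiating the constraint that $c(\theta)$ bisects $|K_0|$ shows that $p'(\theta)$ is always parallel to $u(\theta)$ --- the bisecting chords are tangent to the midpoint curve --- so that $p=d\,n-d'\,u$; expressing the area of $K_0$ as a boundary integral and integrating by parts then gives
\[
|K_0|=\frac14\int_0^{\pi}\ell(\theta)^2\,d\theta+\int_0^{\pi}\bigl(d(\theta)^2-d'(\theta)^2\bigr)\,d\theta .
\]
Because $d$ is $\pi$-antiperiodic its Fourier series contains only odd frequencies, so the last integral is $\le 0$ by Wirtinger's inequality, with equality precisely when all bisecting chords are concurrent. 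This identity exposes the two competing mechanisms --- shortening the chords lowers the first term, a larger midpoint curve lowers the second --- and will serve to bookkeep the reduction.

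The crux is to prove $\ell(\cdot)\equiv\ell_0$ on $K_0$, i.e.\ that $K_0$ is a Zindler set. If not, then $\ell(\theta)>\ell_0$ on an open arc $I\subset[0,\pi)$, and I would construct a competitor $\widetilde K$ with $|\widetilde K|<|K_0|$ and $h(\widetilde K)\ge\ell_0$, contradicting minimality. The competitor is obtained by excising from $K_0$ a thin sliver contained in the region swept by the chords $\{c(\theta):\theta\in I\}$, shaped so that every such chord is strictly shortened while chords outside $I$ are only mildly displaced; the area identity and the uniform continuity of the bisecting-chord map are used to check that no formerly minimal chord drops below $\ell_0$ and that the area strictly decreases. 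Once $K_0$ is a convex Zindler set all of whose bisecting chords have length $\ell_0$, the theorem of Fusco and Pratelli \cite{FP} identifies it as the Auerbach triangle with bisecting chord $\ell_0$ and yields the uniqueness statement.

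The main obstacle is precisely this Zindler reduction. Bisecting chords depend \emph{non-locally} on the body: any perturbation of $\partial K_0$, however localized, moves every bisecting chord at first order, so one cannot merely cut where the chords are long. The sliver must be balanced across each chord it crosses, so that to leading order those chords shorten without the others lengthening, and one must use the first-order optimality of $K_0$ to exclude the scenario in which the compensating displacement of the globally shortest chords pushes one of them below $\ell_0$. Carrying this out quantitatively --- choosing the sliver, simultaneously controlling the induced motion of all chords, and closing the estimate against the constraint $h(\widetilde K)\ge\ell_0$ --- is the heart of the proof.
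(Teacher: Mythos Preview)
Your overall plan --- establish existence of a maximizer $K_0$, show it must be a convex Zindler set, then invoke Fusco--Pratelli --- is exactly the paper's strategy. The difference lies in how the Zindler reduction is carried out, and this is where your proposal has a genuine gap: you correctly identify the non-locality of bisecting chords as the obstacle, but the ``balanced sliver'' construction remains a wish rather than an argument. You yourself flag that closing this step quantitatively is ``the heart of the proof,'' and nothing in the proposal indicates how to do it.

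The paper avoids this obstacle altogether by \emph{relaxing the functional}. Rather than maximizing the shortest bisecting chord directly, it maximizes
\[
\mathcal{G}(K)=\inf_{\substack{F\ \text{half-plane}\\ 0<|F\cap K|\le |K|/2}}\frac{Per(F\cap K;K)^2}{|F\cap K|},
\]
and first proves (Proposition~\ref{cianchic}) that the infimum is always attained by a genuinely \emph{bisecting} half-plane, so $\mathcal{G}(K)=2h(K)^2/|K|$ and the two problems are equivalent. The payoff is that the cutting argument becomes local and clean: if some boundary arc of the maximizer $K^*$ carries no endpoint of an optimal chord, cut it off by a half-plane $H'$, obtaining $\tilde K=K^*\setminus\overline{H'}$. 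The optimal chord of $\tilde K$ still has its endpoints on $\partial K^*$ and bounds a region of measure $<|K^*|/2$; because the relaxed functional admits such sub-bisecting half-planes as competitors in $K^*$, one reads off $\mathcal{G}(\tilde K)\ge\mathcal{G}(K^*)$ immediately (this is the chord analogue of Proposition~\ref{CHLlemma} and inequality~\eqref{CK}). No global control of how all bisecting chords move is needed --- the relaxation absorbs it. The residual cases (optimal isosceles triangles with vertex on $\partial K^*$, the analogue of Corollary~\ref{coroll}) are handled by small symmetric cuts near the vertices.

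Your area identity and the Wirtinger observation are pleasant, but they play no role in the paper and, as you use them, do not by themselves force $\ell\equiv\ell_0$; they would at best say that among Zindler sets those with concurrent chords (discs) have the \emph{largest} area, which is the wrong direction here. The decisive idea you are missing is the relaxation $\mathcal{G}$.
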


In a rephrased form the above theorem states that any convex set $K$, which is not an Auerbach triangle, 
must have an area-bisecting chord which is strictly shorter then the bisecting chord of the Auerbach triangle of the same area $|K|$.

Aside from the fact the fencing problem is interesting in and of itself, Theorem \ref{mainauerb}  is also relevant in connection with geometric dilation (see \cite{DEGKR}),
which is studied in computational geometry, differential geometry and knot theory.
On the other hand,  Theorem \ref{main} has applications in variational problems connected with relative isoperimetric inequalities. We mention here only a few consequences of our Theorem.
  
From now on we consider a bounded open convex set $K \subset \R^2$. It is well known that a relative isoperimetric inequality holds true in the sense that for every $ \alpha \ge1/2$ one can define the relative isoperimetric constant for $K$ as 
 \begin{equation}
 \label{isop_const}
\gamma_\alpha(K) =  \mathop{\inf_{G\not=\emptyset,\> G \subset K}} \dfrac{Per(G;K) }{(\min\{|G|, |K \setminus G|\})^{\alpha}}.
\end{equation}  
As a consequence of Theorem  \ref{main}  we can prove the following corollary.

\begin{corollary}\label{corol_intr}
If $K$ is an open convex set of $\R^2$ and $ \alpha \ge1/2$, we have
\begin{equation}\label{31} 
\gamma_\alpha(K)\le \gamma_\alpha(K^\sharp) ,
\end{equation}
where $K^\sharp$ is the disc of same area as $K$. Moreover, equality holds in {\rm(\ref{31})}  if and 
only if $K$ is a disc. 
\end{corollary}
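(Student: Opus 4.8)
The plan is to deduce the corollary from Theorem \ref{main} together with the sharp relative isoperimetric inequality in a disc. First I would obtain an \emph{upper bound for} $\gamma_\alpha(K)$: if $G\subset K$ has $|G|=|K|/2$, then $\min\{|G|,|K\setminus G|\}=|K|/2$, so by the definition \eqref{isop_const} of $\gamma_\alpha$,
\begin{equation}\label{cor-ub}
\gamma_\alpha(K)\ \le\ \frac{1}{(|K|/2)^\alpha}\inf_{|G|=|K|/2}Per(G;K)\ \le\ \frac{1}{(|K|/2)^\alpha}\Big(\frac4\pi|K|\Big)^{1/2}=\frac{2^{1+\alpha}}{\sqrt\pi}\,|K|^{1/2-\alpha},
\end{equation}
where the second inequality is Theorem \ref{main}. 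In particular, since $|K^\sharp|=|K|$, this already gives $\gamma_\alpha(K^\sharp)\le \frac{2^{1+\alpha}}{\sqrt\pi}|K|^{1/2-\alpha}$, so it only remains to prove the reverse inequality for the disc, which will simultaneously yield \eqref{31} and the identity $\gamma_\alpha(K^\sharp)=\frac{2^{1+\alpha}}{\sqrt\pi}|K|^{1/2-\alpha}$.

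To get $\gamma_\alpha(K^\sharp)\ge \frac{2^{1+\alpha}}{\sqrt\pi}|K|^{1/2-\alpha}$, fix $G\subset K^\sharp$, put $m=\min\{|G|,|K^\sharp\setminus G|\}\le|K^\sharp|/2$, and recall $Per(G;K^\sharp)=Per(K^\sharp\setminus G;K^\sharp)$. Since $\alpha\ge1/2$ we have $m^\alpha\le m^{1/2}(|K^\sharp|/2)^{\alpha-1/2}$, hence $\dfrac{Per(G;K^\sharp)}{m^\alpha}\ge\dfrac{Per(G;K^\sharp)}{m^{1/2}(|K^\sharp|/2)^{\alpha-1/2}}$, and so it suffices to establish the sharp relative isoperimetric inequality in the disc,
\begin{equation}\label{cor-iso}
Per(G;K^\sharp)^2\ \ge\ \frac8\pi\,\min\{|G|,|K^\sharp\setminus G|\}\qquad\text{for every }G\subset K^\sharp,
\end{equation}
because substituting $Per(G;K^\sharp)\ge(\tfrac8\pi m)^{1/2}$ gives exactly $\dfrac{Per(G;K^\sharp)}{m^\alpha}\ge\dfrac{2^{1+\alpha}}{\sqrt\pi}|K^\sharp|^{1/2-\alpha}$. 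Replacing $G$ by its complement we may assume $|G|\le|K^\sharp|/2$; then \eqref{cor-iso} says that the isoperimetric profile $s\mapsto\inf\{Per(G;K^\sharp):|G|=s\}$ of the disc dominates $(\tfrac8\pi s)^{1/2}$ on $(0,|K^\sharp|/2]$, with equality at $s=|K^\sharp|/2$ (the half-disc cut off by a diameter). I would derive this from the classical description of the relative isoperimetric regions of a disc — the caps bounded by circular arcs meeting $\partial K^\sharp$ orthogonally, degenerating to a half-disc at area $|K^\sharp|/2$ — and then check that along this one-parameter family the ratio $Per(G;K^\sharp)^2/|G|$ is monotone decreasing in $|G|$. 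This monotonicity is the step I expect to be the main obstacle, although once the extremal caps are parametrized it reduces to an elementary single-variable inequality.

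Finally I would treat the \emph{equality case}. If equality holds in \eqref{31}, then by \eqref{cor-ub} and the identity just obtained, $\gamma_\alpha(K)=\gamma_\alpha(K^\sharp)=\frac{2^{1+\alpha}}{\sqrt\pi}|K|^{1/2-\alpha}$. Comparing this with the first bound in \eqref{cor-ub} (which used only sets of area $|K|/2$) gives $\inf_{|G|=|K|/2}Per(G;K)\ge \frac{2}{\sqrt\pi}|K|^{1/2}$, that is $\inf_{|G|=|K|/2}Per(G;K)^2\ge\frac4\pi|K|$; together with \eqref{mainineq} this is an equality in \eqref{mainineq}, so the equality statement of Theorem \ref{main} forces $K$ to be a disc. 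Conversely, if $K$ is a disc then $K^\sharp=K$ and \eqref{31} holds with equality trivially.
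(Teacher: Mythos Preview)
Your argument is correct and close in spirit to the paper's, but you are working harder than necessary at the one step you flag as the ``main obstacle''. The relative isoperimetric inequality \eqref{cor-iso} in the disc, i.e.\ the statement $\mathcal{C}(K^\sharp)=8/\pi$, need not be recovered by parametrizing the caps and checking monotonicity of $Per^2/|G|$: it is already contained in the results you may quote. Proposition~\ref{cianchi} (Cianchi) says that for \emph{any} convex $K$ the infimum defining $\mathcal{C}(K)$ is achieved at a set of measure $|K|/2$ whose free boundary is a circular arc orthogonal to $\partial K$; specialized to $K^\sharp$ this forces the minimizer to be a half-disc cut by a diameter, giving $\mathcal{C}(K^\sharp)=8/\pi$ at once. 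Equivalently, Theorem~\ref{main1} states this value explicitly.

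The paper's own proof exploits this same fact but packages it more symmetrically: rather than bounding $\gamma_\alpha(K)$ from above and $\gamma_\alpha(K^\sharp)$ from below separately, it observes (via Cianchi's Lemma~3.1, i.e.\ the content of Proposition~\ref{cianchi}) that for \emph{every} convex $K$ one has the identity
\[
\gamma_\alpha(K)=\gamma_{1/2}(K)\Big(\tfrac{2}{|K|}\Big)^{\alpha-1/2},
\]
since the minimizer always has half area. This reduces \eqref{31} for all $\alpha\ge 1/2$ directly to the case $\alpha=1/2$, which is Theorem~\ref{main1}. Your factorization $m^\alpha\le m^{1/2}(|K^\sharp|/2)^{\alpha-1/2}$ is exactly the inequality half of this identity; applying it to general $K$ (with Proposition~\ref{cianchi} supplying the lower bound $Per(G;K)^2\ge \mathcal{C}(K)\,m$) would give the same identity and shorten your write-up. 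Your treatment of the equality case is fine and matches the paper's.
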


The isoperimetric inequality comes into play, for example,  in variational problems connected with limiting cases of
nonlinear eigenvalue problems and Poincar\'e type inequalities. For instance, following ideas contained in \cite{FF}, it is shown in \cite{Ga}, \cite{GG} that 
the infimum $\gamma_{1/2}(K)$ coincides with the infimum
 \begin{equation}
 \label{gajew_const}
\Phi(K) =  \mathop{\inf_{u\in BV(K)}}_{u\not=\text{const.}} \dfrac{
\|Du\|(K)
}{\|u-t_0 (u)\|_2},
\end{equation}  
where $\|Du\|(K)$ denotes the total variation of $u$ on $K$ and  the functional $t_0$ is defined by 
 \begin{equation*}
t_0(u) = \sup 
\{t:\> |E_t|\ge|K\backslash E_t|\},\quad E_t= \{x\in\Omega: u(x) > t\}.
\end{equation*}
It has also been demonstrated (see \cite{Ga}) that the infimum $\gamma_{1}(K)$ coincides with the infimum
\begin{equation}
\label{eigen_const}
\mu_1(K) =  \mathop{\inf_{u\in BV(K)}}_{u\not=0,\>\int_K\sign\> u=0} \dfrac{\|Du\|(K)
}{\|u\|_1}.
\end{equation}  
The above problem is the limiting variational formulation for $p=1$ for the first nontrivial eigenvalue relative to the
$p$-Laplacian operator with Neumann boundary conditions.

We finally recall that one can consider a quantity which is somewhat related to the minimum problem \eqref{gajew_const}, that is, the best
constant $ I(K)$ in the following Poincar\'e type inequality 
\begin{equation}
\|Du\|(K)\ge I(K)\|u-\bar u\|_2,\qquad u\in BV(K),
\end{equation}
where $\bar u$ denotes the mean value of $u$ over $K$. It has been proved (see, for example, \cite{C1}) that $I(K)$ can be characterized as
\begin{equation} \label{poinc_const}
I(K)=|K|^{1/2}\mathop{\inf_{G \subset K}}_{0<|G|<|K|} \dfrac{Per(G;K) }{\sqrt{|G|\;|K\setminus G|}}.
\end{equation}
Using Corollary \ref{corol_intr} one can obtain the following result which states some isoperimetric inequalities for the quantities defined above. Incidentally, (\ref{34}) answers an open problem which was recently presented in Oberwolfach \cite{Fuscoetal} (see also \cite[Question 4.1]{Z} and \cite[Problem 4]{BV}).
\begin{corollary}\label{corol_intr1}
If $K$ is an open convex set of $\R^2$, we have: 
\begin{align}\label{32} 
&\Phi(K)\le \Phi(K^\sharp) ,\\ \notag\\
\label{33} 
&\mu_1(K)\le \mu_1(K^\sharp) ,\\ \notag\\
\label{34} 
&I(K)\le I(K^\sharp).
\end{align}
Moreover, equality holds in all of the above inequalities if and 
only if $K$ is a disc. 
\end{corollary}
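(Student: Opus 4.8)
The plan is to obtain \eqref{32} and \eqref{33} directly from Corollary~\ref{corol_intr}, and \eqref{34} from Theorem~\ref{main}. For \eqref{32}--\eqref{33} there is essentially nothing to prove: the identities $\Phi(K)=\gamma_{1/2}(K)$ and $\mu_1(K)=\gamma_1(K)$ recalled above are just two ways of writing one and the same infimum, valid for every bounded open convex set and in particular for $K^\sharp$. Thus \eqref{32} is Corollary~\ref{corol_intr} with $\alpha=1/2$, \eqref{33} is Corollary~\ref{corol_intr} with $\alpha=1$, and the equality statements in \eqref{32}--\eqref{33} coincide with the equality statement of Corollary~\ref{corol_intr}.

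For \eqref{34} I would first establish the universal bound $I(K)\le \tfrac{4}{\sqrt\pi}$, valid for every open convex $K\subset\R^2$. Restricting the infimum in the characterization \eqref{poinc_const} to competitors $G$ with $|G|=|K|/2$ gives
\begin{equation*}
I(K)\ \le\ |K|^{1/2}\,\frac{2}{|K|}\mathop{\inf_{G\subset K}}_{|G|=|K|/2}Per(G;K)\ =\ \frac{2}{|K|^{1/2}}\mathop{\inf_{G\subset K}}_{|G|=|K|/2}Per(G;K)\ \le\ \frac{4}{\sqrt\pi},
\end{equation*}
where the last inequality is exactly Theorem~\ref{main}. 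Hence it is enough to show that this bound is attained by the disc, i.e. $I(K^\sharp)=\tfrac{4}{\sqrt\pi}$; together with the displayed chain this gives $I(K)\le I(K^\sharp)$. Moreover, if $I(K)=I(K^\sharp)$ then the last inequality above is an equality, so $\bigl(\inf_{|G|=|K|/2}Per(G;K)\bigr)^{2}=\tfrac{4}{\pi}|K|$ and the rigidity in Theorem~\ref{main} forces $K$ to be a disc; the converse is immediate.

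It remains to compute $I(K^\sharp)$. For a disc $B$ the bound $I(B)\le\tfrac{4}{\sqrt\pi}$ has just been proved (the half-disc is an extremal competitor), so only the reverse inequality is needed, i.e. $Per(G;B)\ge\tfrac{4}{\sqrt{\pi|B|}}\sqrt{|G|\,|B\setminus G|}$ for all $G\subset B$. Here I would invoke the classical description of the sets minimizing relative perimeter at fixed area inside a disc — circular caps cut off by a circular arc meeting $\partial B$ orthogonally, degenerating to a half-disc when the area equals $|B|/2$ — which reduces the claim to the one-variable statement that, writing $h_B(m)$ for the least relative perimeter of a subset of $B$ of area $m$,
\begin{equation*}
h_B(m)\ \ge\ \frac{2}{|B|}\,h_B\!\left(\frac{|B|}{2}\right)\sqrt{m\,(|B|-m)}\qquad\text{for all }m\in(0,|B|).
\end{equation*}
I expect this estimate to be the main obstacle. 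Parametrizing the caps by the radius of the cutting arc one gets $h_B$ and the enclosed area in closed form, but the inequality is sharp at both ends (the two sides are $\sim\sqrt m$ as $m\to0$ and agree at $m=|B|/2$), so it cannot be reached through soft bounds such as $\sqrt{m(|B|-m)}\le|B|/2$ and must be proved by a genuine analysis of $h_B$ — either via its explicit expression, or by combining the concavity of $h_B^{\,2}$ on $(0,|B|)$ with the endpoint behaviour of the two sides. Once this is available, \eqref{34} and its equality case follow as in the preceding paragraph, which completes the proof of Corollary~\ref{corol_intr1}.
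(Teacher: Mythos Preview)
Your treatment of \eqref{32} and \eqref{33} is exactly the paper's: the identities $\Phi=\gamma_{1/2}$ and $\mu_1=\gamma_1$ from \cite{Ga,GG} reduce both inequalities to Corollary~\ref{corol_intr}.

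For \eqref{34} your argument is correct but takes a slightly different route from the paper. Both start from the same observation: restricting the infimum in \eqref{poinc_const} to $|G|=|K|/2$ (equivalently, bounding $\sqrt{|G|\,|K\setminus G|}$ from below by $\min\{|G|,|K\setminus G|\}$) yields $I(K)\le\sqrt{2}\,\gamma_{1/2}(K)$, and Theorem~\ref{main}/Corollary~\ref{corol_intr} then gives $I(K)\le\sqrt{2}\,\gamma_{1/2}(K^\sharp)=4/\sqrt\pi$. The difference lies in the identification $I(K^\sharp)=4/\sqrt\pi$. The paper simply quotes this equality from Cianchi~\cite{C1}, where the isoperimetric-profile inequality $h_B(m)\ge \tfrac{2}{|B|}\,h_B(|B|/2)\sqrt{m(|B|-m)}$ for the disc is already established. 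You instead propose to reprove it, via the explicit cap parametrization or the concavity of $h_B^{\,2}$; this is feasible and makes the argument self-contained, but it is precisely the content of the cited reference, so the paper's route is considerably shorter at the price of an external citation. Your handling of the equality case (equality forces equality in Theorem~\ref{main}, hence $K$ is a disc) is the same as the paper's.
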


The paper is organized as follows. In Section \ref{notation} we give some results concerning properties of the shortest bisecting arcs. In particular, we observe that it is possible to relax our minimum problem in \eqref{mainineq} allowing $|G|\le|K|/2$ and we prove that the supremum in problem
\begin{equation}\label{mainine}
\sup_{|K|=\text{const.}} \mathop{\inf_{G \subset K}}_{0<|G| \le\frac{|K|}{2}}Per(G;K)  \end{equation}
is attained, so that we can speak of a maximum. 

In Section \ref{sect_main} we prove Theorem \ref{main}. Our proof consists of three steps. First we prove that any set which attains the maximum 
in \eqref{mainine} has the property that each point of its boundary is a terminal point of a shortest bisecting arc. We name this property \emph{constant halving length}, abbreviated CHL. The other two steps consist of obtaining a parametric representation of CHL-sets and studying
the maximum problem \eqref{mainine} over the class of CHL-sets.

In Section \ref{sect_main10} we prove Theorem \ref{mainauerb}. The proof follows the arguments used in the previous section. The main novelty is an apparently new relaxed formulation which is analogous to \eqref{mainine} (see Proposition \ref{cianchic} below).

Finally, in Section \ref{last} we briefly discuss the centrosymmetric case and we sketch the proof of the corollaries stated above.

\section {Notation and preliminaries}\label{notation}

Let $K$ be an open convex set of $\R^2$. We set
\begin{equation}
\label{const}
\mathcal{C}(K) =  \mathop{\inf_{G \subset K}}_{0<|G| \le \frac{|K|}{2}} \mathcal{Q}(G;K)
\end{equation}
where
\begin{equation}
\label{quotient}
\mathcal{Q}(G;K) =  \dfrac{Per(G;K)^2}{|G|}.
\end{equation}
In what follows we say that a set $E$ is a minimizer for \eqref{quotient} if the minimum in \eqref{const} is attained on $E$.

\begin{proposition}
\label{cianchi}
Let $K$ be an open convex set of $\R^2$. There exists a convex minimizer of \eqref{quotient} whose measure equals $\dfrac{|K|}{2}$, and any minimizer $E$ has the following properties
\begin{enumerate}[(a)]
\item $\partial E  \cap  K$ is either a circular arc or a straight segment. Moreover neither $E$ nor $K \setminus E$ is a circle.
\item Let $P$ be one of the terminal points of $\partial E  \cap  K$. Then $P$ is a regular point of $\partial K$ in the sense that $\partial K$ has a tangent straight line at $P$ and $\partial E  \cap  K$ is orthogonal to $\partial K$.
As a consequence either $E$ or $K \setminus E$ is convex.
\item If $|E|<\dfrac{|K|}{2}$, then $E$ is a circular sector having sides on $\partial K$.
In such a case there exists another minimizer $\hat E$ which is a sector with sides on $\partial K$, having the same vertex as $E$, such that $|\hat E|=\dfrac{|K|}{2}$.
\item If $\partial E  \cap  K$ is a circular arc, its opening angle is at most $\sqrt3$.
\end{enumerate}
\end{proposition}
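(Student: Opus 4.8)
The plan is to run the direct method, use the regularity theory of relative (volume-constrained) isoperimetric regions to pin down the shape of the free boundary, and then extract the finer information of (b)--(d) from the first variation together with the a priori bound $\mathcal{C}(K)\le 2\sqrt3$ coming from \eqref{sant}. \emph{Existence.} One starts from a minimizing sequence $G_j$, $0<|G_j|\le|K|/2$, $\mathcal{Q}(G_j;K)\to\mathcal{C}(K)$. Since $\mathcal{C}(K)<\infty$ (test with a bisecting chord) the relative perimeters are bounded, so by compactness of $BV$ on the Lipschitz domain $K$ a subsequence converges in $L^1(K)$ to a set $E$ with $|E|=\lim|G_j|\le|K|/2$ and $Per(E;K)\le\liminf Per(G_j;K)$; if $|E|>0$ this gives $\mathcal{Q}(E;K)\le\mathcal{C}(K)$ and $E$ is a minimizer. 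The only point to check is that mass does not vanish: analysing the isoperimetric profile $h(m)=\inf\{Per(G;K):|G|=m\}$ as $m\to 0$, where $K$ is modelled near its worst boundary point by a half-plane or a wedge, bounds $\liminf_{m\to0}h(m)^2/m$ from below, and when this liminf equals $\mathcal{C}(K)$ the scale invariance of $\mathcal{Q}$ on wedges already produces a minimizer of positive measure. (The existence of a minimizer of measure exactly $|K|/2$ will come from (c) below in case a minimizer has smaller measure.)

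\emph{Parts (a) and (b).} A minimizer $E$ of $\mathcal{Q}$ with $|E|=m$ also minimizes $Per(\cdot;K)$ among subsets of $K$ of measure $m$ — a strictly cheaper competitor of the same measure would have strictly smaller $\mathcal{Q}$ — so $E$ solves a relative isoperimetric problem, and by the standard first-variation/regularity theory in the plane $\partial E\cap K$ is a finite union of smooth curves of one and the same constant curvature (the Lagrange multiplier of the volume constraint), i.e. of circular arcs of a common radius and/or segments, meeting $\partial K$ orthogonally at their endpoints. Connectedness is what makes this a single arc or segment: from the elementary $(a+b)^2/(s+t)>\min\{a^2/s,b^2/t\}$ one gets $\mathcal{Q}(A\cup B;K)>\min\{\mathcal{Q}(A;K),\mathcal{Q}(B;K)\}$ for separated pieces, so a minimizer cannot be disconnected, and the same applied to $K\setminus E$ (admissible since $|E|\le|K|/2$) forces $K\setminus E$ connected; planar topology then gives one arc/segment. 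The clause on circles follows because a disc lying in $K$, or a disc equal to $K\setminus E$, forces $\mathcal{Q}\ge 4\pi$, while $4\pi>2\sqrt3\ge\mathcal{C}(K)$ by \eqref{sant}, and the remaining degenerate possibility that such a disc shares an arc with $\partial K$ creates a corner of $\partial K$ at an endpoint, excluded by (b). For (b) itself, the orthogonality is the natural boundary condition; to see $P$ is a regular point, move the endpoint tangentially along $\partial K$: the first variation of $Per(\cdot;K)-\Lambda|\cdot|$ produces a boundary term proportional to the cosine of the contact angle which, at a corner of $\partial K$, can be made to decrease $\mathcal{Q}$ from one side, contradicting minimality. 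Since $\partial K$ is then differentiable at $P$ and the orthogonal arc splits the local half-plane (on the $K$ side of the tangent) into two right angles, both $E$ and $K\setminus E$ have a $\pi/2$ corner at $P$, so the arc curving toward exactly one of them (or the segment) leaves that region convex.

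\emph{Parts (c) and (d).} Because the constraint $|G|\le|K|/2$ can always be relaxed from above, a minimizer is one-sidedly critical: perturbing the arc normally into the region it curves toward and imposing $\delta\mathcal{Q}\ge0$ yields $\kappa\le Per(E;K)/(2|E|)$, i.e. the arc radius $R$ satisfies $|E|\le\tfrac12 R\ell$ with $\ell=Per(E;K)$ the arc length, hence $\mathcal{Q}(E;K)=\ell^2/|E|\ge 2\ell/R=2\theta_0$, where $\theta_0=\ell/R$ is the opening angle. With $\mathcal{C}(K)\le 2\sqrt3$ from \eqref{sant} this gives $\theta_0\le\sqrt3$, which is (d). If moreover $|E|<|K|/2$ the constraint is inactive on both sides, so the above holds with equality: $\kappa=Per(E;K)/(2|E|)$, hence $|E|=\tfrac12 R\ell$ and $\mathcal{Q}(E;K)=2\theta_0$, the arc curves into $E$, and $E$ is convex. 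The equality cases of the endpoint conditions then force the tangent line to $\partial K$ at each endpoint to pass through the centre $O$ of the arc and $|E|$ to equal the area of the circular sector $S$ spanned by $O$ and the arc; using the convexity of $K$ to locate $S$ relative to $E$, the rigidity of $|E|=|S|$ identifies $E$ with $S$, so $E$ is a circular sector whose two radii lie on $\partial K$ as straight edges with common vertex $O$. Enlarging this sector with the same vertex and angle until its measure reaches $|K|/2$ — possible inside $K$ and leaving $\mathcal{Q}$ unchanged, since $\mathcal{Q}\equiv 2\theta_0$ along such a family — produces the companion minimizer $\hat E$; and if a minimizer already has measure $|K|/2$, then (b) makes one of $E$, $K\setminus E$ a convex minimizer of that measure, completing the list.

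\emph{Main obstacle.} The delicate point is the rigidity in (c): turning the identity $|E|=|S|$ into $E=S$ requires carefully placing $O$ and the sector $S$ relative to $E$ from the orthogonality at the endpoints and the convexity of $K$, and in particular deducing that $\partial K$ must contain the two radii as straight segments. By comparison, existence is routine once loss of mass is excluded, (a)--(b) are the standard regularity package for relative isoperimetric regions, and (d) is immediate from the one-sided Euler--Lagrange inequality and \eqref{sant}.
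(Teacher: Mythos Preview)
The paper does not actually prove this proposition: for (a)--(c) it refers the reader to Cianchi \cite{C}, and for (d) it says only that the bound is ``a simple consequence of inequality \eqref{sant}''. So there is no in-paper argument to compare yours against; what you have written is a reconstruction of the standard route (direct method, constant-curvature regularity for relative isoperimetric sets in the plane, first variation at the contact points, then the quantitative bound via \eqref{sant}), which is presumably close to the content of \cite{C}. Your derivation of (d) from the one-sided Euler--Lagrange inequality together with $\mathcal C(K)\le 2\sqrt3$ is exactly in the spirit of the paper's one-line remark.

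Your sketch is largely sound, but two spots deserve tightening. First, the connectedness of $K\setminus E$: you call $K\setminus E$ admissible ``since $|E|\le|K|/2$'', but admissibility requires $|K\setminus E|\le|K|/2$, i.e.\ $|E|\ge|K|/2$, so as written your argument only covers $|E|=|K|/2$. When $|E|<|K|/2$ one argues instead with a component $C$ of $K\setminus E$: if some $|C|>|K|/2$ then $K\setminus C\supsetneq E$ is admissible with strictly smaller relative perimeter and strictly larger area, contradicting minimality; if every $|C|\le|K|/2$ your superadditivity inequality applied to the $C$'s does the job since $\mathcal Q(K\setminus E;K)<\mathcal Q(E;K)$. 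Second, in (d) the chain $\kappa\le Per(E;K)/(2|E|)\Rightarrow |E|\le \tfrac12 R\ell\Rightarrow \mathcal Q\ge 2\theta_0$ needs $\kappa\ge0$ with your sign convention; you should say explicitly that when $|E|=|K|/2$ you pass to the convex one among $E$, $K\setminus E$ (both are minimizers) before running the computation. Your flagging of the rigidity step in (c) as the genuine obstacle is accurate.
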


For the proof of statements \emph{(a)--(c)} we refer to \cite{C}. As regards statement \emph{(d)}, we observe that it is a simple consequence of inequality \eqref{sant}.

We need a slightly more precise information about the terminal points of $\partial E  \cap  K$ where $E$ is a minimizer of \eqref{quotient}.

\begin{proposition}
\label{attainable}
Let $K$ be an open convex set of $\R^2$. If $E$ is a convex minimizer of \eqref{quotient}, then at any terminal point of $\partial E  \cap  K$ the set $K$ satisfies an internal disc condition.
\end{proposition}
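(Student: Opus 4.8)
The plan is to argue at a terminal point $P$ of $\partial E\cap K$ using the orthogonality and convexity information already supplied by Proposition \ref{cianchi}. By part (b), $\partial K$ has a tangent line $\ell$ at $P$ and the arc (or segment) $\partial E\cap K$ meets $\partial K$ orthogonally at $P$, so the arc leaves $P$ in the direction of the inner normal $\nu$ to $K$ at $P$; moreover either $E$ or $K\setminus E$ is convex. Relabelling if necessary (and using that replacing $E$ by $K\setminus E$ changes neither $Per(\cdot;K)$ nor the measure constraint), I would assume $E$ is convex. The goal is to produce a disc $D\subset K$ tangent to $\ell$ at $P$, i.e. an internal disc condition at $P$.

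First I would set up a contradiction: suppose no internal disc at $P$ exists. Since $P\in\partial K$ is a regular point with tangent $\ell$, failure of the internal disc condition means that arbitrarily small discs tangent to $\ell$ at $P$ and lying on the inner side stick out of $K$; equivalently, the boundary $\partial K$ near $P$ cannot be touched from inside by any circle through $P$, so in particular $\partial K$ has a corner-like concavity in the curvature sense — its lower curvature at $P$ (in a viscosity/support sense) is $-\infty$. The key step is then a competitor construction. Near $P$, push the arc $\partial E\cap K$ slightly outward into a thin sliver between the old arc and a small piece of $\partial K$ near $P$, obtaining a new admissible set $E'\supset E$ with $|E'|>|E|$ and a modified relative boundary. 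Because the original arc meets $\partial K$ orthogonally, the first-order cost in relative perimeter of such a normal perturbation is governed by the curvature of $\partial E\cap K$ minus the curvature of $\partial K$ at the contact, and when the internal disc condition fails on the $K$-side the geometry lets one gain area at strictly sub-first-order perimeter cost; rescaling $E'$ down to restore $|E'|=|K|/2$ then strictly decreases $\mathcal{Q}(E';K)$, contradicting minimality. Concretely I would: (i) choose a small circular arc $\sigma$ tangent to $\ell$ at $P$ with tiny radius, lying inside $K$ except possibly near $P$; since the internal disc condition fails, in fact one can slip such a $\sigma$ strictly inside $K$ away from $P$ while it detaches from $\partial K$, carving a region of positive area on the $K$-side of the old arc; (ii) replace the portion of $\partial E\cap K$ in a neighbourhood of $P$ by $\sigma$ followed by a short reconnecting arc; (iii) estimate $Per(E';K)-Per(E;K)=o(|E'\setminus E|^{1/2})$ using that $\partial E\cap K$ is a circular arc (bounded curvature) and that the new boundary pieces are shorter than a fixed multiple of $\sqrt{|E'\setminus E|}$; (iv) use $\mathcal{Q}(E';K)\le (Per(E;K)+o(\sqrt{\delta}))^2/(|E|+c\delta)<\mathcal{Q}(E;K)$ for $\delta=|E'\setminus E|$ small.

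An alternative, possibly cleaner, route avoids perturbations entirely: use Proposition \ref{cianchi}(a) to write $\partial E\cap K$ as a circular arc $C$ of some radius $\rho$ (the case of a straight segment being the limit $\rho=\infty$), note that by orthogonality $C$ meets $\ell$ at $P$ at right angle, and then reflect. The reflection of $C$ across $\ell$ is the arc of the circle that continues $C$ through $P$; since $E$ is convex and lies on one side of $C$, one shows that a disc of radius $\rho$ (resp. a half-space if $\rho=\infty$) internally tangent to $\ell$ at $P$ must be contained in $K$, because any point of that disc that escaped $K$ would, together with the known tangency of $\partial K$ and the orthogonality, force $\partial E\cap K$ to exit $K$ before reaching $P$ — contradicting that $P$ is a terminal point on $\partial K$. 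This realizes the internal disc as literally the disc whose boundary arc is the analytic continuation of $\partial E\cap K$.

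The main obstacle, in either approach, is making the local geometry at $P$ rigorous without extra regularity of $\partial K$: $K$ is merely convex, so $\partial K$ need not be $C^1$ anywhere except — by part (b) — at the finitely many terminal points, and even there we only know a tangent line exists, not a curvature bound. Thus the delicate point is to upgrade "tangent line exists at $P$ and $\partial E\cap K\perp\partial K$ at $P$'' to "a genuine internal disc fits at $P$''. I expect to handle this by combining convexity of $E$ (or of $K\setminus E$) with the minimality-driven competitor estimate above: convexity pins down the one-sided behaviour of $\partial E\cap K$, while the variational inequality $\mathcal{Q}(E';K)\ge\mathcal{Q}(E;K)$ forbids the pathological (no-internal-disc) configuration. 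The routine parts — the $o(\sqrt\delta)$ perimeter bookkeeping and the rescaling computation — I would relegate to a short explicit estimate and not belabour here.
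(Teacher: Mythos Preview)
Your second, ``cleaner'' route is geometrically wrong. Since the arc $C=\partial E\cap K$ meets $\partial K$ orthogonally at $P$, the tangent to $C$ at $P$ is along the inner normal $\nu$, and hence the radius of $C$ at $P$ lies along the tangent line $\ell$. The circle containing $C$ is therefore centred on $\ell$; it is \emph{not} tangent to $\partial K$ at $P$ and half of it lies on the wrong side of $\ell$, outside $K$. So the disc bounded by the analytic continuation of $C$ is never an internal disc at $P$, and the reflection argument proves nothing.

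Your first route has the right overall strategy (build a competitor and contradict minimality), but the competitor you describe does not exploit the failure of the internal disc condition, and the estimates are not right. For a convex $K$, failure of the internal disc condition at a regular boundary point means the (support) curvature of $\partial K$ at $P$ is $+\infty$, not $-\infty$: concretely, for every $M>0$ there are points of $\partial K$ arbitrarily close to $P$ whose normal distance to $\ell$ exceeds $M$ times the square of their tangential distance. The paper's construction uses precisely this: one \emph{tilts} the optimal arc by a small parameter $\varepsilon$ so that its new terminal lands on such an anomalous boundary point $P_n$. Because $\partial K$ bends away so fast, an entire portion of the tilted arc of length $\gtrsim M\varepsilon^2$ now lies beyond $\partial K$ and drops out of the relative perimeter, while the area change is only $O(\varepsilon^2)$ with a fixed coefficient independent of $M$. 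Choosing $M$ large makes the quotient strictly decrease. Your ``push outward to gain area $\delta$'' competitor does not see this mechanism at all; near $P$ the arc and $\partial K$ meet in a right-angle wedge, so carving out area $\delta$ there costs relative perimeter of order $\sqrt{\delta}$, not $o(\sqrt{\delta})$, and the inequality $(L+o(\sqrt{\delta}))^2/(|E|+c\delta)<L^2/|E|$ does not follow. The missing idea is that the competitor must move the terminal point along $\partial K$ and cash in the super-quadratic departure of $\partial K$ from $\ell$ as a perimeter saving of the \emph{same order} as the area change but with an arbitrarily large constant.
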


\begin{proof}
We give the details of the proof just for the case that  $\partial E  \cap  K$ is not a straight segment.
In view of Proposition \ref{cianchi}\emph{(c)}, the claim is trivial when $|E|<|K|/2$. We fix the origin of our reference frame at the center of the circle, of radius $r$, to which $\partial E  \cap  K$ belongs. Denoting by $P$ one of the terminal points of $\partial E  \cap  K$ we use polar coordinates $(\rho, \theta)$ such that the tangent line to $\partial K$ through $P$ corresponds to $\theta=0$ and the arc $\partial E  \cap  K$ corresponds to $(r,\theta)$ for $0\le\theta\le\theta_0$, where $\theta_0=L/r$ and $L$ is the length of $\partial E  \cap  K$. Clearly, we have $P=(r,0)$ and and we denote by $\theta=\theta(\rho)$ the local parametric representation of $\partial K$ around the point $P$. Arguing by contradiction, we assume that  the set $K$ does not satisfy the internal disc condition at $P$, hence, for every $M>0$, there exists a sequence of points $P_n=(\rho_n,\theta(\rho_n))\in \partial K$, $n\in\N$, such that $P_n$ converges to $P$ and
\begin{equation}\label{tn}
\rho_n\theta(\rho_n)>M(\rho_n-r)^2,\qquad\forall n\in\N.
\end{equation}
Let us consider the sequence of arcs $\{a_n\}_{n\in \N}$ obtained as a perturbation of the optimal arc $\partial E  \cap  K$ which are described in the following way:
\begin{equation}\label{perturb}
r_n(\theta)=r+\eps_n \left(\frac{L}{2}-r\theta\right),
\end{equation}
where
\begin{equation}\label{epsn}
\eps_n=\frac{\rho_n-r}{\displaystyle\frac{L}{2}-r\theta(\rho_n)}.
\end{equation}
We observe that $a_n$ intersects $\partial K$ in $P_n$ and that $\eps_n$ goes to zero as $n$ goes to infinity.

Let us choose $M$ such that
\begin{equation}\label{emme}
M>\frac2L\left(\frac\pi{24} \mathcal{C}(K)+1\right)\ .
\end{equation}
The arc $a_n$ splits the set $K$ in two parts and, if $E_n$ denotes the one with smaller measure, using \eqref{tn},\eqref{perturb},\eqref{epsn} we have
\begin{equation}\label{EEn}
|E|- |E_n|
\le \frac12 \left| \int_0^{\theta_0}r^2\,d\theta-\int_0^{\theta_0}r_n(\theta)^2\,d\theta\right|+o(\eps_n^2)=\frac{L^2\theta_0}{24}\eps_n^2+o(\eps_n^2).
\end{equation}

On the other hand we have
\begin{align*}
Per(E_n;K)-Per(E;K)& \le  \int_{\theta(\rho_n)}^{\theta_0}\sqrt{r_n(\theta)^2+r_n'(\theta)^2}\,d\theta- r\theta_0 \\\notag
&\le \int_0^{\theta_0}\sqrt{r_n(\theta)^2+r_n'(\theta)^2}\,d\theta- \int_0^{\theta(\rho_n)}r_n(\theta)\,d\theta -r\theta_0 \\\notag
&\le \int_0^{\theta_0}\sqrt{r_n(\theta)^2+r_n'(\theta)^2}\,d\theta - \theta(\rho_n)\left(r-|\eps_n|\frac{L}{2}\right)-r\theta_0
\\\notag
&\le \eps_n^2 \frac {L}{2}  - M \left(r-|\eps_n|\frac{L}{2}\right) \frac{(\rho_n-r)^2}{\rho_n} + o(\eps_n^2)\\\notag
&= \eps_n^2 \left(\frac {L}{2} - \frac{M}{\rho_n} \left(r-|\eps_n|\frac{L}{2}\right)\left(\frac{L}{2}-r\theta(\rho_n)\right)^2\right) + o(\eps_n^2)\\\notag
&=  - \frac{L}{2}\left(\eps_n^2 \left(M \frac{L}{2}-1\right)  +  o(\eps_n^2)\right).
\end{align*}
Therefore, for $n$ sufficiently large, in view of \eqref{emme}, we have
$$ \dfrac{Per(E_n;K)^2}{|E_n|}<\dfrac{Per(E;K)^2}{|E|},$$
which contradicts the optimality of $E$.

\end{proof}

\begin{definition}[Optimal arc]
If $E$ is a minimizer of \eqref{quotient} whose measure equals $\dfrac{|K|}{2}$, then we say that $\partial E  \cap  K$ is an optimal arc of $K$.\end{definition}

If $a$ is an optimal arc and $b$ is any other rectifiable simple curve which splits $K$ in two parts of equal measure, then $\ell(a)\le\ell(b)$, where $\ell(\cdot)$ denotes the length of a curve.

An optimal arc has the property to have the shortest possible length among all the halving curves of $E$. For the sake of simplicity, unless otherwise specified, whenever we speak about circular arcs this includes straight line segments (interpreted as circular arcs 
with infinite radius).

\begin{lemma}
\label{succ}
Let $\{K_n\}_{n\in \N}$ be a sequence of open convex sets converging to an open convex set $K$ in the sense of the Hausdorff metric $(\mbox{i.e. }\lim_nd_H(K,K_n)=0)$, then
$ \lim_n \mathcal{C}(K_n) = \mathcal{C}(K)$. In
particular, if $ E_n $ $(n\ge 1)$ is a sequence of convex minimizers of $\mathcal{Q}(\cdot;K_n)$ with $|E_n| = \dfrac{|K_n|}{2}$, every cluster point  of $\{E_n\}_{n\in\N}$ (in the Hausdorff metric) is  a minimizer of $\mathcal{Q}(\cdot;K)$.

\end{lemma}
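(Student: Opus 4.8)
The plan is to establish the convergence $\mathcal{C}(K_n)\to\mathcal{C}(K)$ by proving the two inequalities $\limsup_n\mathcal{C}(K_n)\le\mathcal{C}(K)$ and $\liminf_n\mathcal{C}(K_n)\ge\mathcal{C}(K)$ separately, and then to deduce the statement about cluster points of minimizers as an easy consequence. For the $\limsup$ inequality, I would fix a competitor $G\subset K$ with $0<|G|\le|K|/2$ that is nearly optimal for $\mathcal{C}(K)$, and transplant it into $K_n$. Since $d_H(K_n,K)\to0$, a suitable rescaling and translation $\Phi_n$ of $G$ (or, more robustly, $\Phi_n(G\cap K_n)$ with a small dilation centered at an interior point of $K$) produces sets $G_n\subset K_n$ with $|G_n|\to|G|$, $|G_n|\le|K_n|/2$ eventually, and $Per(G_n;K_n)\to Per(G;K)$; hence $\mathcal{Q}(G_n;K_n)\to\mathcal{Q}(G;K)$, giving $\limsup_n\mathcal{C}(K_n)\le\mathcal{Q}(G;K)$, and we let $G$ approach the infimum. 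The one subtlety here is controlling the relative perimeter under the perturbation, which is why it is convenient to take $G$ to be one of the well-structured near-minimizers guaranteed by Proposition \ref{cianchi} — a circular arc or segment cut — for which the perimeter depends continuously on the cutting data.

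For the $\liminf$ inequality, I would use the minimizers directly. By Proposition \ref{cianchi} each $K_n$ admits a convex minimizer $E_n$ with $|E_n|=|K_n|/2$, so $\mathcal{C}(K_n)=\mathcal{Q}(E_n;K_n)$. Since $K_n\to K$ in Hausdorff distance, the $K_n$ are uniformly bounded, so by Blaschke's selection theorem a subsequence of $\{E_n\}$ converges in Hausdorff metric to a convex set $E\subset K$. Along this subsequence $|E_n|\to|E|$ (Hausdorff convergence of convex bodies implies convergence of areas), so $|E|=|K|/2$, and in particular $E\ne\emptyset$ and $0<|E|\le|K|/2$. The crucial point is lower semicontinuity of the relative perimeter: I would argue $Per(E;K)\le\liminf_n Per(E_n;K_n)$. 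This does not follow from the standard BV lower semicontinuity for a fixed domain, because the ambient set is also moving; I would handle it by noting that for any fixed compact $C\Subset K$ one has $C\Subset K_n$ for large $n$, whence $Per(E_n;K_n)\ge Per(E_n;\mathrm{int}\,C)\to$ something controlled by $Per(E;\mathrm{int}\,C)$ via the usual lower semicontinuity on the fixed open set $\mathrm{int}\,C$, and then exhaust $K$ by such $C$. Combining, $\mathcal{C}(K)\le\mathcal{Q}(E;K)\le\liminf_n\mathcal{Q}(E_n;K_n)=\liminf_n\mathcal{C}(K_n)$.

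Putting the two inequalities together yields $\lim_n\mathcal{C}(K_n)=\mathcal{C}(K)$. For the final assertion, let $E$ be any cluster point of a sequence of convex minimizers $E_n$ with $|E_n|=|K_n|/2$; passing to the relevant subsequence, the argument just given shows $E\subset K$ is convex with $|E|=|K|/2$ and $\mathcal{Q}(E;K)\le\liminf_n\mathcal{Q}(E_n;K_n)=\liminf_n\mathcal{C}(K_n)=\mathcal{C}(K)$, so $E$ is a minimizer of $\mathcal{Q}(\cdot;K)$. I expect the main obstacle to be the lower semicontinuity of $Per(E_n;K_n)$ as the domain $K_n$ varies; the exhaustion-by-compact-subsets device above should resolve it, but one must take care that no perimeter is "lost" near $\partial K$ — which is where the convexity of the $E_n$, hence uniform rectifiability bounds on $\partial E_n\cap K_n$ (the arcs have bounded length and bounded curvature by Proposition \ref{cianchi}), makes the passage to the limit clean.
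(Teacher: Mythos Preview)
Your proposal is correct and follows essentially the same route as the paper: the $\limsup$ bound by testing $\mathcal{C}(K_n)$ against a (near-)optimal competitor for $K$, the $\liminf$ bound via Blaschke selection on the convex half-area minimizers $E_n$ together with the exhaustion-by-compact-subsets device (the paper writes this as $Per(E;K)=\lim_\varepsilon Per(E;K-\varepsilon B)$), and the cluster-point statement as an immediate corollary. The one streamlining you miss is that the paper first invokes the homothety invariance of $\mathcal{C}$ to reduce to the case $K_n\subseteq K$, which makes the $\limsup$ step a one-liner ($Per(E^*;K_n)\le Per(E^*;K)$ trivially) and avoids your transplantation maps $\Phi_n$ altogether.
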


\begin{proof}
Since $\mathcal{C}(\cdot)$ is invariant under homothety, it is not restrictive to assume that $K_n\subseteq K$.
Let  $E^*$ be any convex set minimizing $\mathcal{Q}(\cdot,K)$. It immediately follows
$$\limsup_n \mathcal{C}(K_n)\!\le\! \limsup_n\dfrac{Per(E^*;K_n)^2}{\min\{|K_n\cap E^*|,|K_n\setminus E^*|\}}\!\le\! \limsup_n\dfrac{Per(E^*;K)^2}{\min\{|K_n\cap E^*|,|K_n\setminus E^*|\}}\!=\!\mathcal{C}(K)$$

On the other hand by Blaschke's selection theorem, see \cite[p. 50]{Sc}, $\{E_n\}_{n\in\N}$  is compact with respect to the Hausdorff metric, and if $E$ is a cluster point of $\{E_n\}_{n\in\N}$ then $|E|=\dfrac{|K|}{2}$ and 
\begin{align}\notag
\mathcal{Q}(E;K)=&\lim_{\varepsilon}\frac{Per(E;K-\varepsilon B)^2}{|E|}\le\lim_{\varepsilon}\liminf_n\frac{Per(E_n;K-\varepsilon B)^2}{|E|}\\ \notag \\ \notag
\le&\liminf_n\frac{Per(E_n;K_n)^2}{|E|}=\liminf_n \mathcal{C}(K_n),\end{align}
where $B$ is the unit disc in $\R^2$ with center at the origin.

\end{proof}

Choosing  $K_n\equiv K$ in Lemma \ref{succ}  we obtain 
\begin{corollary}
\label{compact}
Let ${\mathcal F}(K) $ be the family of area-halving minimizers, that is the family of all minimizers of $\mathcal{Q}(\cdot;K)$  whose measure equals $\dfrac{|K|}{2}$. Then $\mathcal{F}(K)$ is a compact set in the Hausdorff metric.
In particular the  set  ${\mathcal E} ( K)$, subset of $\partial K$, consisting of the all the terminal points of all optimal arcs of $K$ is compact in $\R^2$.
\end{corollary}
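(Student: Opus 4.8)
The plan is to deduce Corollary~\ref{compact} directly from Lemma~\ref{succ} by the indicated specialization $K_n\equiv K$. First I would observe that taking the constant sequence $K_n=K$ makes the hypothesis $\lim_n d_H(K,K_n)=0$ trivially satisfied, so the lemma applies verbatim. The conclusion then says that every cluster point (in the Hausdorff metric) of a sequence of convex area-halving minimizers of $\mathcal{Q}(\cdot;K)$ is again an area-halving minimizer of $\mathcal{Q}(\cdot;K)$; equivalently, $\mathcal{F}(K)$ is closed under Hausdorff limits of sequences drawn from itself.

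Next I would establish precompactness. All elements of $\mathcal{F}(K)$ are convex subsets of the fixed bounded convex set $K$, so they form a uniformly bounded family of convex bodies (or degenerate convex sets), and Blaschke's selection theorem—already invoked inside the proof of Lemma~\ref{succ}—guarantees that every sequence in $\mathcal{F}(K)$ has a subsequence converging in the Hausdorff metric to some convex set. Combined with the closedness just noted, this shows $\mathcal{F}(K)$ is sequentially compact, hence compact, in the Hausdorff metric. (One should also note that the limit automatically has measure $|K|/2$, since Hausdorff convergence of convex sets implies convergence of areas when the limit is nondegenerate; this is exactly the point recorded in Lemma~\ref{succ}, so nothing new is needed here.)

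For the second assertion, I would pass from sets to their arc-endpoints. Define $\mathcal{E}(K)\subset\partial K$ as the set of all terminal points of the optimal arcs $\partial E\cap K$ as $E$ ranges over $\mathcal{F}(K)$. To see $\mathcal{E}(K)$ is compact, take a sequence of such terminal points $P_j$, with $P_j$ a terminal point of $\partial E_j\cap K$ for some $E_j\in\mathcal{F}(K)$. By the compactness of $\mathcal{F}(K)$ we may pass to a subsequence with $E_j\to E\in\mathcal{F}(K)$ in the Hausdorff metric; since each $E_j\subset K$ and $K$ is bounded, the $P_j$ lie in a compact set and we may further assume $P_j\to P\in\partial K$. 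It then remains to check that $P$ is a terminal point of $\partial E\cap K$. By Proposition~\ref{cianchi}(a) each $\partial E_j\cap K$ is a circular arc or segment, and Hausdorff convergence of the $E_j$ forces the defining circles/lines (center, radius, endpoints on $\partial K$) to converge, so $\partial E_j\cap K$ converges as a curve to $\partial E\cap K$ and the endpoints converge accordingly; hence $P$ is a terminal point of the optimal arc of the limit set $E$, i.e. $P\in\mathcal{E}(K)$.

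The main obstacle is the last step: making rigorous the claim that Hausdorff convergence of the minimizers $E_j$ to $E$ entails convergence of the boundary arcs $\partial E_j\cap K$ together with their endpoints. One has to rule out degeneration—e.g. the arc shrinking to a point, or the radius blowing up in an uncontrolled way—and use the orthogonality and internal-disc conditions of Propositions~\ref{cianchi}(b) and~\ref{attainable} to keep the endpoints away from pathological behaviour, together with the uniform length bound coming from $\mathcal{C}(K)<\infty$ and the fixed area $|K|/2$. Once the convergence of arcs is pinned down, the compactness of $\mathcal{E}(K)$ follows immediately, and everything else is a routine application of Blaschke selection plus the stability statement of Lemma~\ref{succ}.
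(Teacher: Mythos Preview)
Your proposal is correct and matches the paper's approach, which consists of the single line ``Choosing $K_n\equiv K$ in Lemma~\ref{succ} we obtain'' the corollary; you have simply fleshed out the implicit details (Blaschke selection for precompactness, Lemma~\ref{succ} for closedness, and then passage to endpoints). One minor inaccuracy worth flagging: not every $E\in\mathcal{F}(K)$ is itself convex---Proposition~\ref{cianchi}(b) only guarantees that either $E$ or $K\setminus E$ is---but since both have measure $|K|/2$ and share the same optimal arc, you may always pass to the convex representative without affecting either the Blaschke argument or the conclusion about $\mathcal{E}(K)$.
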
 

\section {Proof of Theorem \ref{main}}\label{sect_main}

In this section we prove the following result.
\begin{theorem}
\label{main1} For any open convex set $K$ of $\R^2$ we have
\begin{equation}\label{mainineqa}
\mathcal{C}(K) \le \mathcal{C}(K^{\sharp})=\frac8\pi \simeq 2.5464\dots\ .
\end{equation}
Moreover, equality holds in  inequality {\rm (\ref{mainineqa})} if and only if $K$ is a disc.
\end{theorem}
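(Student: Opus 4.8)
\textbf{Proof plan for Theorem \ref{main1}.}

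The plan is to reduce the problem to a one-parameter family of explicitly describable sets and then to optimize by hand. First I would establish, as the paper announces, that the supremum in \eqref{mainine} is attained, so there is an optimal convex set $K_0$ (normalized, say, to $|K_0|=\pi$) realizing $\mathcal{C}(K_0)=\sup_K\mathcal{C}(K)$; by Lemma \ref{succ} and Corollary \ref{compact} this is a genuine maximum and the family $\mathcal F(K_0)$ of area-halving optimal arcs is compact. The first substantive step is to prove that $K_0$ has the \emph{constant halving length} property: every point of $\partial K_0$ is a terminal point of some optimal arc. The argument is by contradiction — if some boundary arc of $\partial K_0$ carried no terminal point of an optimal arc, one could perturb $K_0$ near that arc, pushing the boundary slightly outward there, increasing the area, and by Proposition \ref{attainable} (which gives an internal disc condition, hence a quantitative lower bound on how the length of a competing arc grows under perturbation) show that $\mathcal{C}$ strictly increases after renormalizing the area, contradicting maximality. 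The compactness from Corollary \ref{compact} is what makes ``no terminal point on an open sub-arc'' a robust, open condition that survives the perturbation.

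The second step is to extract a parametric representation of CHL-sets. By Proposition \ref{cianchi}(a),(b), through each boundary point $P$ there passes an optimal arc which is either a circular arc (of some fixed radius tied to the common optimal length $L$) or a straight segment, meeting $\partial K_0$ orthogonally at both endpoints, and all these arcs have the same length $\ell=\sqrt{\mathcal{C}(K_0)\,|K_0|/2}$. I would use this two-endpoint orthogonality together with the fact that the arcs have common radius $R$ to set up an ODE for $\partial K_0$: parametrize $\partial K_0$ by arclength or by the angle of the tangent, let $P(t)$ and $Q(t)$ be the two endpoints of the optimal arc through $P(t)$, and exploit that the optimal arc is a circular arc of radius $R$ orthogonal to $\partial K_0$ at both ends. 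This yields that the locus of centers of the optimal arcs is itself a curve, and the relation between $P$, $Q$ and that center curve — together with the area-bisection constraint $|E(t)|=|K_0|/2$ for all $t$ — pins down $\partial K_0$ up to finitely many parameters (essentially the radius $R$ and the opening angle, which by Proposition \ref{cianchi}(d) is at most $\sqrt3$). Convexity of $E$ or of $K_0\setminus E$ from Proposition \ref{cianchi}(b) restricts which branch of solutions is admissible.

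The final step is to maximize $\mathcal{C}$ over this explicit family. Having written $|K_0|$ and $\ell^2$ as explicit functions of the remaining parameter(s), the inequality $\mathcal{C}(K_0)=\ell^2/(|K_0|/2)\le 8/\pi$ becomes a calculus exercise in one variable, whose interior critical points and boundary behavior I would check; I expect the maximum to occur in the degenerate limit where the optimal arcs become straight diameters, i.e. $K_0$ is the disc, and that strict inequality holds for every other value, giving the equality characterization. I expect the \textbf{main obstacle} to be the second step: rigorously deriving and solving the ODE/functional equation for CHL-sets, in particular handling the bookkeeping of the two moving endpoints and the possibility that the optimal arc through a given point is not unique (the compact family $\mathcal F(K_0)$ may be genuinely multi-valued), and ruling out exotic CHL-configurations other than the disc before one ever gets to the final one-variable optimization.
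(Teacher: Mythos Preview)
Your three-step architecture matches the paper's, but each step contains a real error or gap.

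\textbf{Step 1.} Your perturbation goes the wrong way. If you push $\partial K_0$ outward on an arc free of terminal points, the old optimal halving set $E$ (with $|E|=|K_0|/2$) is still admissible in the enlarged set $K'$ with $Per(E;K')=Per(E;K_0)$, so $\mathcal{C}(K')\le \mathcal{Q}(E;K')=\mathcal{C}(K_0)$ --- the inequality points the wrong direction. The paper instead \emph{cuts off} a slab $K_0\cap\overline{H'}$ near the free arc: the optimal $E$ for the smaller set $\tilde K$ still has its terminal points on the old boundary, so $Per(E;\tilde K)=Per(E;K_0)$, while $|E|=|\tilde K|/2<|K_0|/2$; this makes $E$ a competitor in $\mathcal{C}(K_0)$ and yields $\mathcal{C}(\tilde K)\ge \mathcal{C}(K_0)$. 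One must then also dispose of the equality case, which leads to circular sectors and ultimately (via a separate lemma that two optimal arcs cross exactly once, which you do not mention but is essential) to a contradiction.

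\textbf{Steps 2--3.} The optimal arcs of a CHL-set share a common \emph{length} $L$, not a common radius. Since $L=R\theta$, the opening angle $\theta$ varies from arc to arc, and so does the radius. The paper parametrizes $\partial K$ by $\sigma\in[-\pi,\pi)$ (the direction of the chord joining the endpoints) and obtains a \emph{function} $\theta(\sigma)$ of class $C^{0,1}$, antisymmetric under $\sigma\mapsto\sigma-\pi$; the area of $K$ is then an explicit nonlocal functional of $\theta(\cdot)$ (formula~\eqref{area}). Consequently the final optimization is not a one-variable calculus exercise but an infinite-dimensional problem: the paper expands an auxiliary function $f(\theta(\sigma))$ in Fourier series, uses the orthogonality relations coming from the closure of $\partial K$ to kill the first harmonics, and then combines a quadratic-form estimate with a pointwise inequality $g^2(\tau)-\tfrac{9}{8}f^2(\tau)\ge L^2/4$ to conclude $|K|\ge \pi L^2/4$ with equality only when $\theta\equiv 0$ (the disc). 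Your plan, as written, would stall as soon as you discover that the ``remaining parameter'' is in fact a Lipschitz function on the circle.
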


The above result immediately implies Theorem \ref{main}. Indeed, in view of Proposition \ref{cianchi}, we have

$$\mathcal{C}(K) =  \mathop{\inf_{G \subset K}}_{|G| = \frac{|K|}{2}} \mathcal{Q}(G;K)=\frac{2}{|K|}\mathop{\inf_{G \subset K}}_{|G| = \frac{|K|}{2}} Per(G;K)^2,
$$
and this combined with \eqref{mainineqa} implies \eqref{mainineq}.

Theorem \ref{main1} is a consequence of the results contained in the following three subsection, in particular of Propositions \ref{CHLlemma},  \ref{CHL-repr} and  \ref{miniimality}.

\subsection{Reduction to a CHL-set}\label{sub_red}

\begin{lemma}
\label{intersect}
Two optimal arcs of $K$ either cross each other transversally in one and only one point or they coincide.
\end{lemma}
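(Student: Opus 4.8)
The statement to prove is Lemma \ref{intersect}: two optimal arcs of $K$ either coincide or cross transversally in exactly one point. I would first dispose of the non-crossing possibilities. If two distinct optimal arcs $a_1, a_2$ were disjoint, then one of them, say $a_1$, would lie in one of the two components into which $a_2$ splits $K$; but $a_2$ splits $K$ into two pieces of area $|K|/2$, so $a_1$ would be a bisecting curve entirely contained in a region of area $|K|/2$, forcing one of the two pieces it creates to have area $\le |K|/2$ that is strictly contained in a set of area $|K|/2$ — a contradiction with $|E|=|K|/2$ unless the pieces coincide, which is impossible for disjoint arcs. So $a_1$ and $a_2$ must meet. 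Next I would rule out tangential contact (a common point where they do not cross) and rule out two or more crossing points.

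\textbf{Main mechanism.} The key tool is the rigidity from Proposition \ref{cianchi}: each optimal arc is a genuine circular arc (or segment) meeting $\partial K$ orthogonally at its two regular endpoints, and by Proposition \ref{attainable} $K$ satisfies an internal disc condition there. Suppose $a_1$ and $a_2$ meet at a point $Q \in K$ without crossing (tangential contact) or meet at two or more points. In either case one can perform a standard cut-and-paste: at a common point $Q$, swap the two arcs to form two new curves $b_1, b_2$ each still bisecting $K$ (the total enclosed areas are preserved by the swap), with $\ell(b_1)+\ell(b_2) = \ell(a_1)+\ell(a_2) = 2\,\mathcal{C}(K)^{1/2}\,(|K|/2)^{1/2}$ in the length normalization, hence $\min\{\ell(b_1),\ell(b_2)\} \le$ the optimal length, so $b_1$ and $b_2$ are \emph{also} optimal arcs. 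But now $b_1$ (or $b_2$) has a corner at $Q$: at a tangential contact the swapped curve is only piecewise $C^1$ with a genuine angle, and at a double crossing the swapped curves likewise have corners. This contradicts Proposition \ref{cianchi}(a), which says an optimal arc ($\partial E \cap K$ for a minimizer $E$) is a single circular arc or segment — in particular it is smooth in the interior of $K$ and has no corners. Rounding the corner strictly decreases length while preserving enclosed area to first order (a standard first-variation argument, cf. the perturbation in Proposition \ref{attainable}), so the swapped curve is in fact \emph{not} optimal, and reversing the inequality we conclude the original configuration was impossible. The only surviving case is a single transversal crossing.

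\textbf{Expected obstacle.} The delicate point is making the cut-and-paste rigorous: one must check that swapping arcs at a common point genuinely produces two curves that each bisect $K$ (tracking which region gets which area contribution — this uses that the four arc-pieces at $Q$ together with portions of $\partial K$ bound regions whose areas can be rearranged into two halves), and one must verify that the resulting curve really does have a corner rather than an accidental $C^1$ junction. In the tangential case the two circular arcs meet with equal tangent lines at $Q$, so the swapped curve is actually $C^1$ there; the contradiction must instead come from the arcs then being two distinct circular arcs through $Q$ with the same tangent but different curvature, so the swapped curve is $C^1$ but not a single circular arc — again violating Proposition \ref{cianchi}(a). Handling these sub-cases cleanly, and confirming that the shortest of $b_1, b_2$ does not already equal $a_1$ or $a_2$ (so that we get a genuinely new, non-smooth optimal arc, or else directly that $a_1 = a_2$), is where the real care is needed. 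I expect this case analysis around the junction point to be the heart of the argument.
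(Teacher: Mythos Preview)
Your cut-and-paste strategy has a genuine gap at its core: the swapped curves $b_1,b_2$ do \emph{not} bisect $K$. Take the two-crossing case, with $a_1,a_2$ meeting at $P$ and $Q$; write $\alpha_2,\beta_2$ for the middle sub-arcs between $P$ and $Q$, bounding a lune $\Lambda$. If you swap the middle pieces (the only swap that yields simple curves here), the region on one side of $a_1'=\alpha_1\cup\beta_2\cup\alpha_3$ has area $|K|/2\pm|\Lambda|$, not $|K|/2$; the same for $a_2'$. If instead you swap at a single point, the resulting curves either self-intersect or cross each other at the second point. So neither version produces bisecting curves, and your claim ``each still bisecting $K$'' fails. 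Once bisection is lost, the chain ``$\min\{\ell(b_1),\ell(b_2)\}\le L$ $\Rightarrow$ optimal $\Rightarrow$ smooth'' collapses: the shorter $b_i$ bounds a set $G$ with $|G|=|K|/2-|\Lambda|$, and then $\mathcal Q(G;K)=\ell(b_i)^2/(|K|/2-|\Lambda|)$ need not be $\le \mathcal C(K)$ even if $\ell(b_i)\le L$. Corner-rounding saves length of local order, not of order $|\Lambda|$, so it cannot close this deficit. (In the single-crossing case --- the one you want to \emph{allow} --- the swap visibly fails too: with the four regions $R_1,\dots,R_4$ around the crossing, your $b_1$ separates a single $R_j$ from the rest, of area generally $\neq |K|/2$.)

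The paper avoids this obstruction by \emph{not} reusing pieces of $a$ and $a'$. Instead it builds genuinely new bisecting curves out of arcs \emph{parallel} to the four half-arcs $a_1,\dots,a_4$ (inner/outer depending on the geometry of the two circle centers), choosing the offsets by continuity so that each new curve does bisect $K$; the work is then a direct length computation (equations \eqref{deltap}--\eqref{maggioo}) showing the total length of the two new bisecting curves is strictly below $2L$. This sidesteps the area defect entirely. Your disjointness argument and the observation that shared terminal points are excluded by orthogonality are fine and match the paper's (p3)--(p4); the failure is specifically in the two-crossing reduction.
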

\begin{proof}
Let us consider two optimal arcs $a , a'\subset \bar K$, they certainly satisfy the following properties:
\begin{enumerate}[(p1)]
\item they have the same length $L$;
\item the terminal points belong to $\partial K$, and in those points $\partial K$ has tangent lines;
\item they both split the set $K$ into two subset of equal measure;
\item they are orthogonal to the boundary at each terminal point.
\end{enumerate}
We notice that because of (p3) $a$  and $a'$ have to cross each other in some point inside $K$, and therefore, in view of (p4), unless they coincide, they can not share any terminal point.

We are going to show that if $a $ and $a'$ cross each other twice it is always possible to construct a strictly shorter curve, which splits the set $K$ into two subsets of equal measure.  This contradicts the hypothesis that $a $ and $a'$ are optimal arcs.

Let us assume that $a $ and $a'$ cross each other twice and let $O $ and $O'$
be the centers of the circles to which $a $ and $a'$ belong. Then one of the two cases  certainly occur:  
\vglue .3cm
\noindent Case (i).\ The segment $O O'$ crosses the arcs $a $ and $a'$ (see Figure \ref{intersection}(i)).

\noindent Case (ii).\ The segment $OO'$ does not cross  the arcs $a $ and $a'$ (see Figure \ref{intersection}(ii)).
\vglue .3cm
The case where one of the two arcs degenerates into a straight segment is a limiting case that can be treated as case (i).\medskip
\setlength\fboxsep{30pt}
\setlength\fboxrule{0.0pt}
\begin{figure}
\fbox{\includegraphics[width=2.5 in]{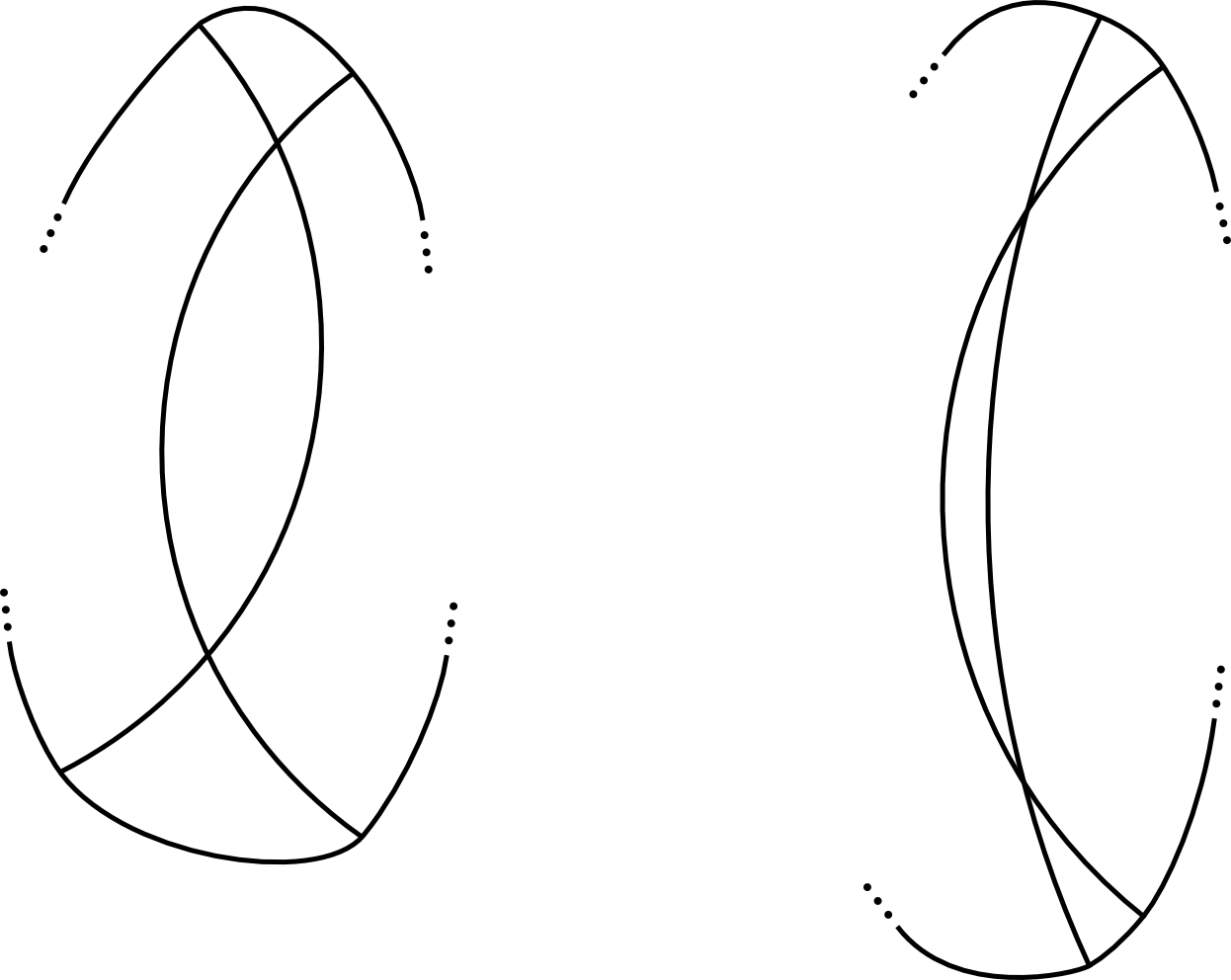}}
\caption{Cases (i) and (ii)}\label{intersection}
\begin{picture}(0,0)(150,-30)
\put(65,170){$\partial K$}
\put(240,170){$\partial K$}
\put(70,120){$a$}
\put(110,120){$a'$}
\put(185,110){$a'$}
\put(210,110){$a$}
\put(88,200){(i)}
\put(215,200){(ii)}
\end{picture}
\end{figure}

\noindent{Case (i)}

The radii 
connecting $O$ with the terminal points of $a$ (see Figure \ref{intersection}(i)) have length $r$ and are tangent to the boundary of $K$. Similarly, the radii 
connecting $O'$ with the terminal points of $a'$ have length $r'$ and are tangent to the boundary of $K$. 
In Figure \ref{case-a}(a) we have drawn the arcs $a$, $a'$ and the points $O$, $O'$. The segment $OO'$ splits $a$ and $a'$ into four arcs $a_1, a_2$ and $a_3, a_4$. 
Then we consider an arc $b_1$ inner parallel to $a_1$ and an arc $b_4$ inner parallel to $a_4$ (see Figure \ref{case-a}(b)), such that they intersect $OO'$ in the same point. Therefore the union of $b_1$ and $b_4$ is an arc of a $C^1$ curve with non empty intersection with $K$ and  it   splits such set into two parts. We will choose $b_1$ and $b_4$ in such a way that $b_1\cup b_4$ splits the set $K$ into two subsets of equal measure. Notice that  it is always possible to satisfy such a condition since, in the limiting case $b_1=a_1$, the arc of curve $a_1\cup b_4$ splits the set $K$ in two parts where the left hand side has a measure smaller than the right one. On the other hand, in the limiting case $a_4 = b_4$,  the  arc of curve $b_1\cup a_4$ splits the set $K$ into two parts where the left hand side has a measure bigger than the right one.
\setlength\fboxsep{30pt}
\setlength\fboxrule{0.0pt}
\begin{figure}
\fbox{
\includegraphics[width=2.5 in]{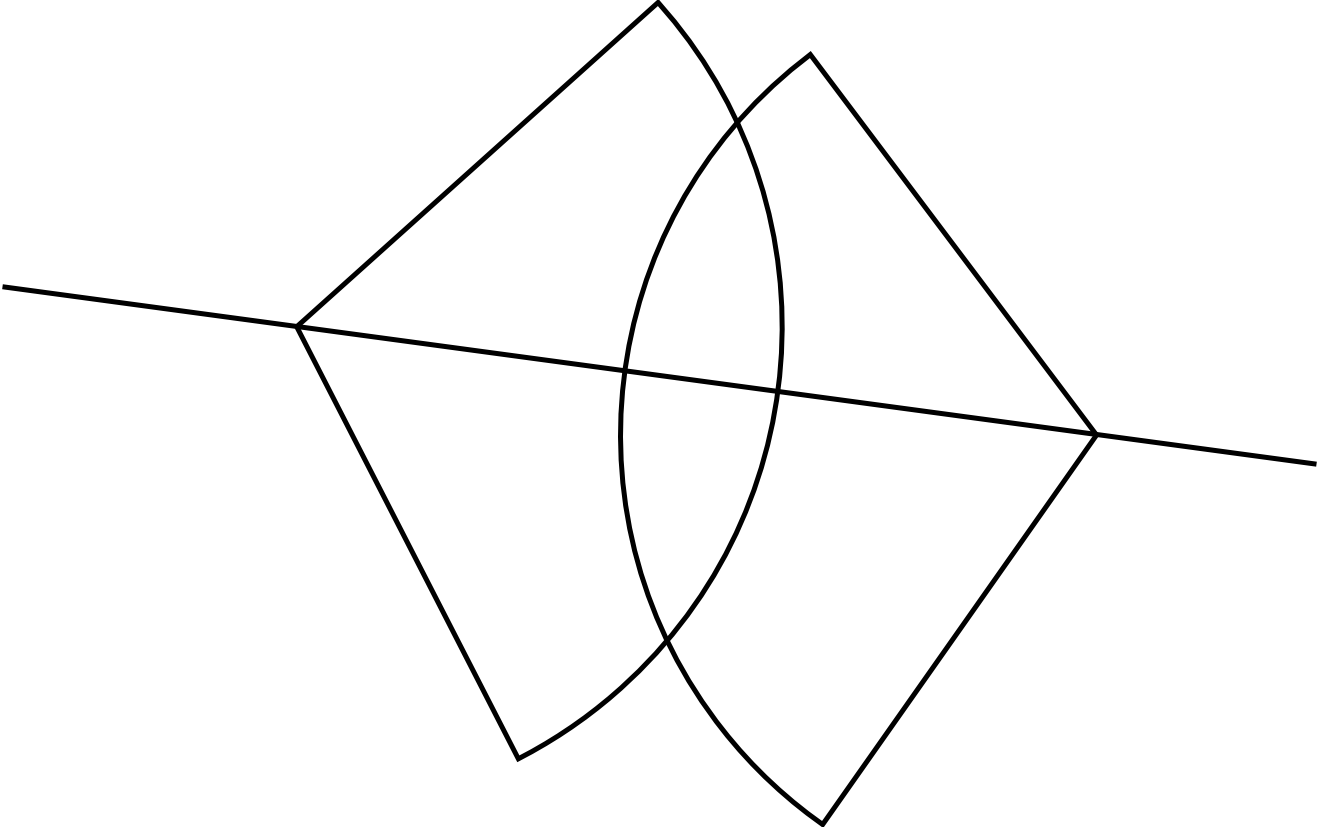}}
\fbox{\includegraphics[width=2.5 in]{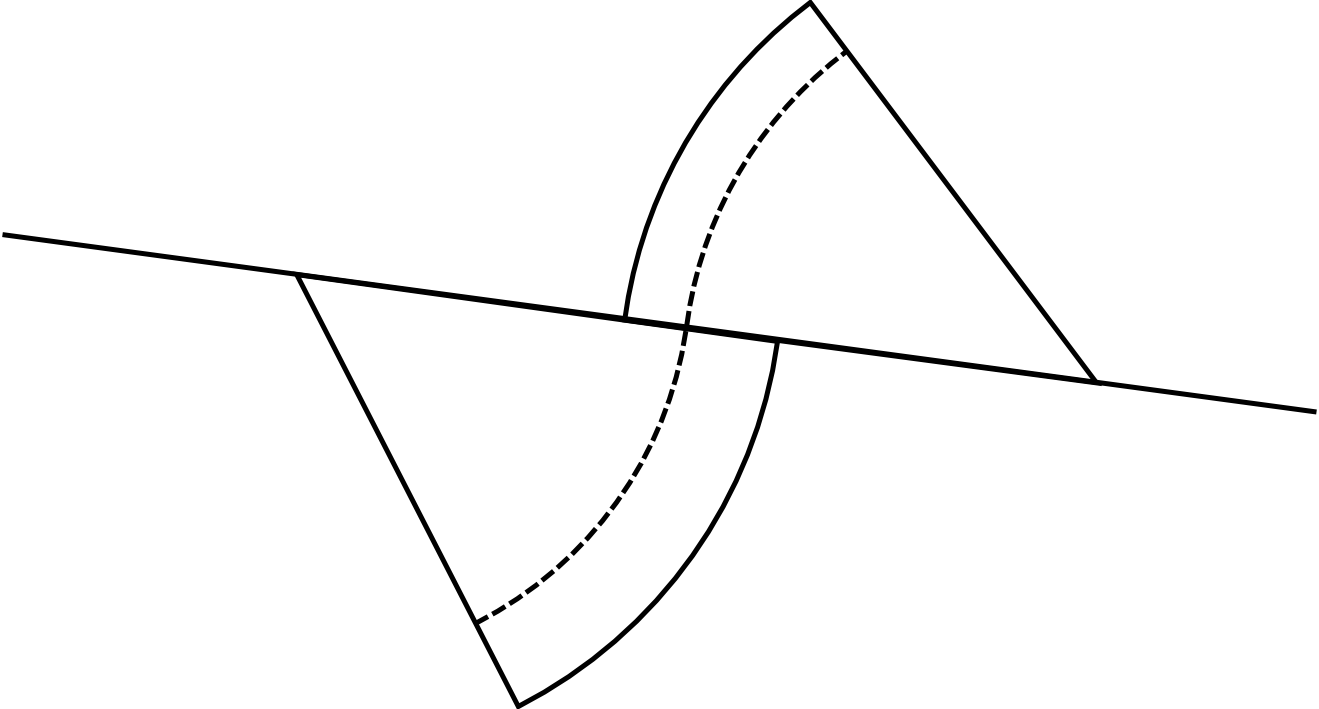}
\includegraphics[width=2.5 in]{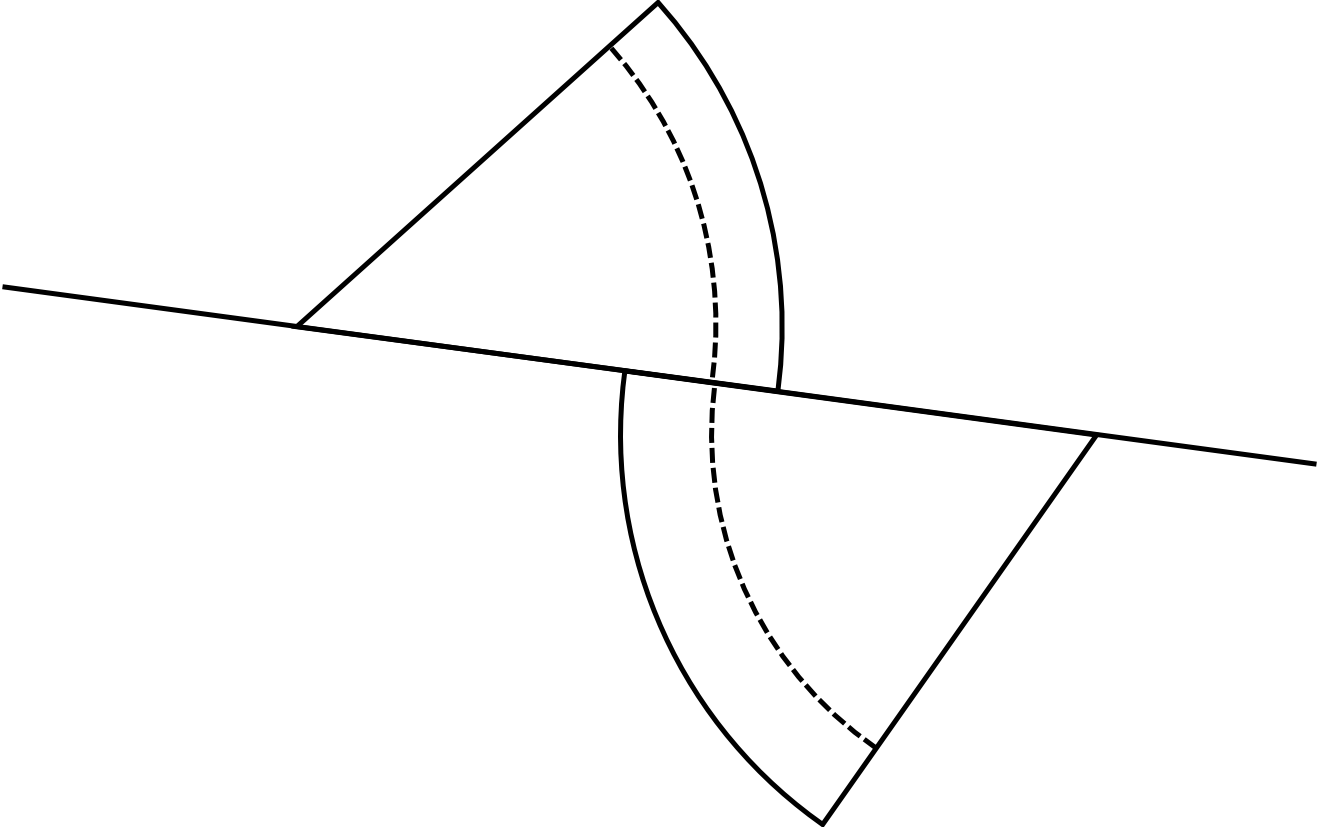}}
\caption{Construction in case (i) }\label{case-a}
\begin{picture}(0,0)(150,-30)
\put(150,335){(a)}
\put(90,285){$O'$}
\put(215,265){$O$}
\put(140,295){$a_1$}
\put(175,285){$a_3$}
\put(130,255){$a_2$}
\put(170,250){$a_4$}
\put(80,160){(b)}
\put(54,120){$a_1$}
\put(77,100){$b_1$}
\put(40,70){$b_4$}
\put(70,55){$a_4$}
\put(260,160){(c)}
\put(228,117){$b_3$}
\put(260,130){$a_3$}
\put(228,58){$a_2$}
\put(260,73){$b_2$}
\put(302,280){$a = a_1 \cup a_2 $}
\put(302,265){$a' = a_3 \cup a_4$}
\end{picture}
\end{figure}

In the same way we can construct an arc $b_3$  inner parallel to $a_3$,  and an arc $b_2$ inner parallel to $a_2$ (see Figure \ref{case-a}(c)), such that they intersect $OO'$ in the same point. Arguing as before the point of intersection can be chosen in such a way that the arc of curve given by the union of $b_3$ and $b_2$ splits the set $K$ into two subset of equal measure.

Since
$$
 \ell( (b_1\cup b_4 )\cap K)+(\ell(b_2 \cup b_3) \cap K)) <\ell(a_1)+\ell(a_2)+\ell(a_3)+\ell(a_4) = 2L
$$
at least one of the two arc of curves  $(b_1 \cup b_4 )\cap K$ or $(b_3 \cup b_2) \cap K$ has length strictly smaller than $L$, which contradicts the hypothesis.\medskip

\noindent Case (ii)
\setlength\fboxsep{30pt}
\setlength\fboxrule{0.0pt}
\begin{figure}
\fbox{{\includegraphics[width=2.5 in]{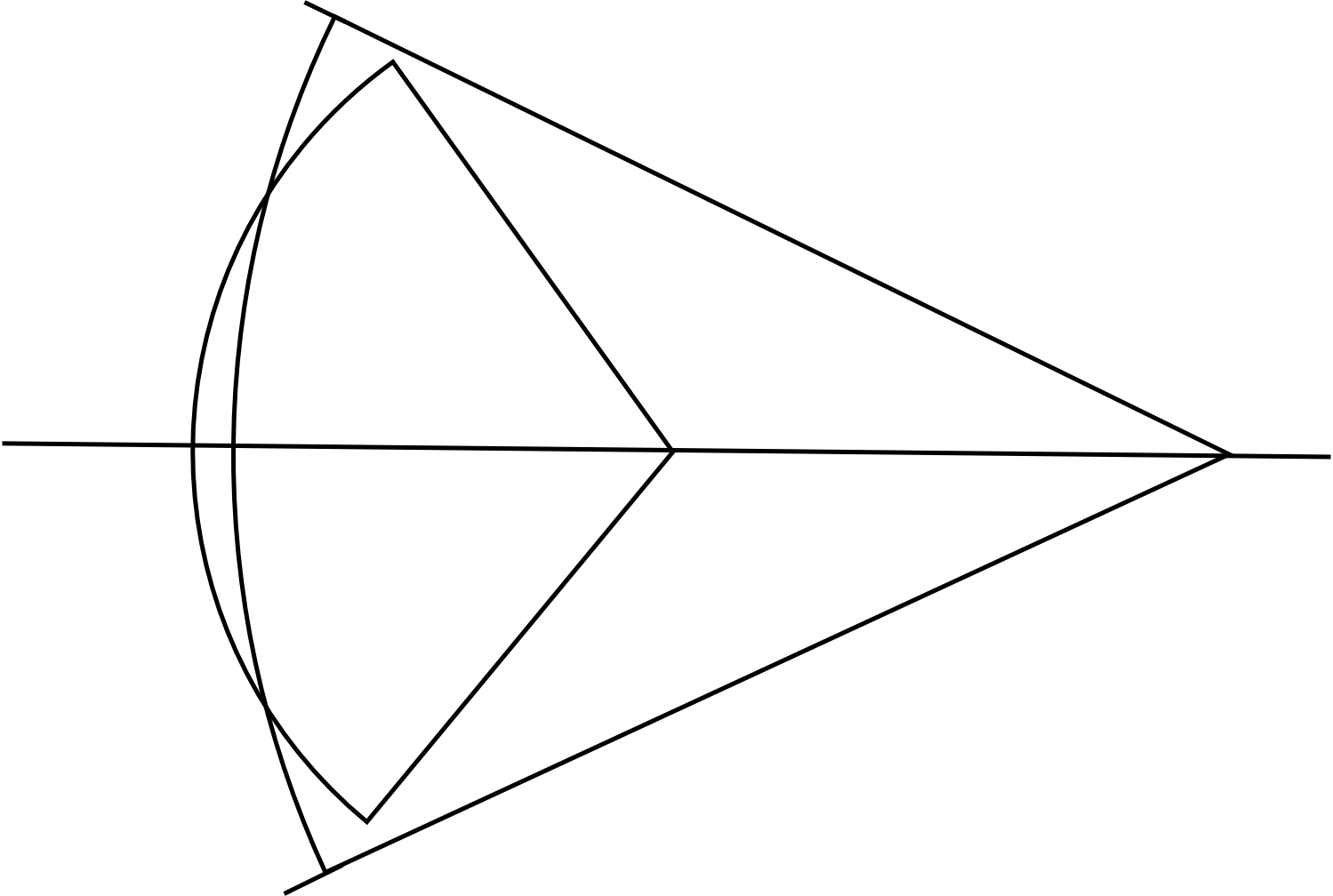}}}
\fbox{{\includegraphics[width=2.5 in, angle=-0.08]{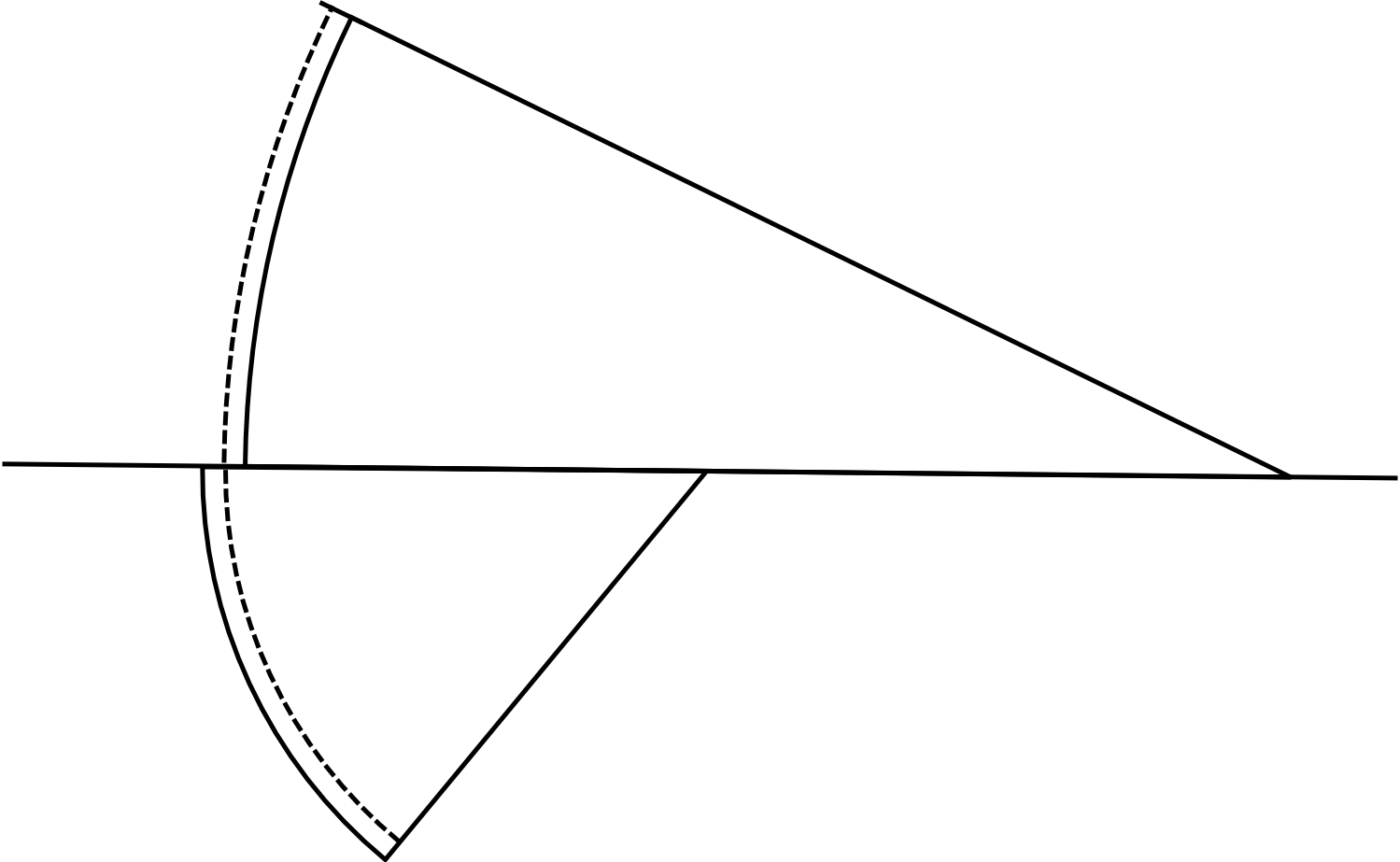}}
{\includegraphics[width=2.5 in,angle=-0.08]{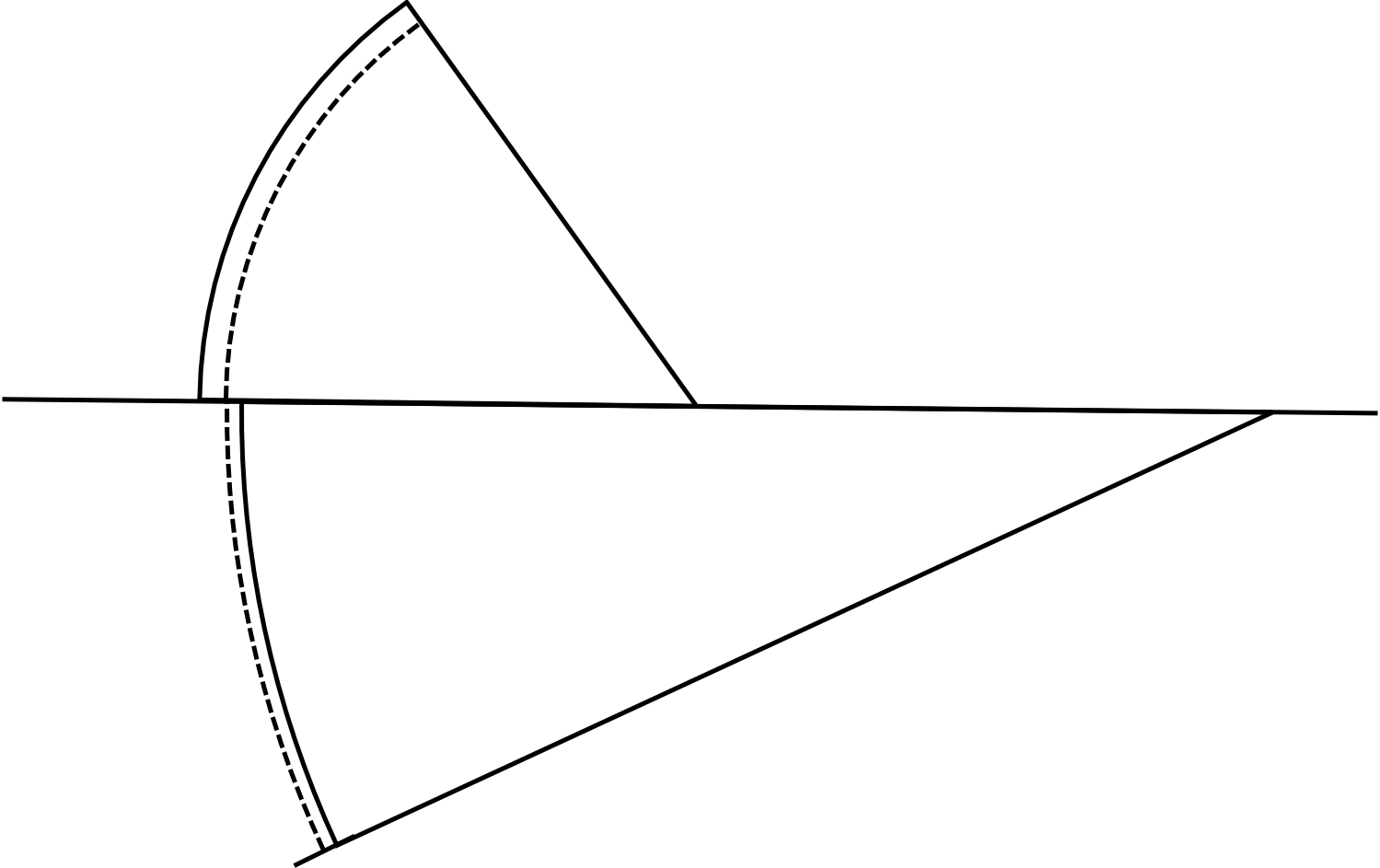}}}
\caption{Construction in case (ii)}\label{case-b}
\begin{picture}(0,0)(150,-30)
\put(150,350){(a)}
\put(150,275){$O'$}
\put(220,275){$O$}
\put(100,280){$a_1$}
\put(72,280){$a_3$}
\put(100,255){$a_2$}
\put(72,255){$a_4$}
\put(40,165){(b)}
\put(-10,120){$b_1$}
\put(7,110){$a_1$}
\put(-15,60){$a_4$}
\put(8,58){$b_4$}
\put(270,165){(c)}
\put(170,120){$a_3$}
\put(190,112){$b_3$}
\put(172,58){$b_2$}
\put(195,60){$a_2$}
\put(300,290){$a = a_1 \cup a_2 $}
\put(300,275){$a' = a_3 \cup a_4$}
\end{picture}
\end{figure}

The radii 
connecting $O$ with the terminal points of $a$ (see Figure \ref{intersection}(ii)) have length $r$ and are tangent to the boundary of $K$. Similarly, the radii 
connecting $O'$ with the terminal points of $a'$ have length $r'$ and are tangent to the boundary of $K$. With the above notation we have that $r'<r$.
In  Figure \ref{case-b}(a) we have drawn the arcs $a$, $a'$ and the points $O$, $O'$. The line passing through $OO'$ splits $a$ and $a'$ in four arcs $a_1, a_2$ and $a_3, a_4$.
Then we consider an arc $b_1$ outer parallel to $a_1$ and an arc $b_4$ inner parallel to $a_4$ (see Figure \ref{case-b}(b)), such that they intersect the line passing through $OO'$ in the same point. Therefore the union of $b_1$ and $b_4$ is an arc of a $C^1$ curve with non empty intersection with $K$ and it  splits such set into two parts. We choose $b_1$ and $b_4$ in such a way that $b_1\cup b_4$ splits the set $K$ into two subset of equal measure. 

In the same way we can construct an arc $b_3$ inner parallel to $a_3$,  and an arc $b_2$ outer parallel to $a_2$ (see Figure \ref{case-b}(c)), such that they intersect the line passing through $OO'$ in the same point and the union of $b_3$ and $b_2$ splits the set $K$ into two subset of equal measure.

Now we want to prove that at least one of the two arcs of curves  $(b_1 \cup b_4 )\cap K$ or $(b_3 \cup b_2) \cap K$ has length strictly smaller than $L$, which contradicts the hypothesis.

Given a circular arc $c$ we denote by  ${\mathcal S}(c)$  the circular sector delimited by   $c$  and  by $\theta(c)$  its opening angle. For every $i=1,\dots,4$ we define $\varphi_i \ge 0$ such that
\begin{equation*}
\theta(a_i) = \theta(b_i \cap K) + \varphi_i.
\end{equation*}
If $u_i= {\rm dist} (a_i,b_i)$, $i=1,\dots,4$, we have:
\begin{eqnarray}
\label{deltap}
\ell(b_i \cap K) = \ell(a_i) + \theta(a_i) u_i -  (r+u_i) \varphi_i,
 \quad i=1,2 \\
\cr
\label{deltapp}  \ell(b_i \cap K) =  \ell(a_i) - \theta(a_i) u_i - (r'-u_i)\varphi_i, \quad i=3,4.
\end{eqnarray}

On the other hand, we observe that
$$
\sum_{i=1}^{4} |{\mathcal S}(b_i ) \cap K| =\sum_{i=1}^{4} |{\mathcal S}(a_i) \cap K| = 
 |K|.
$$
The terms with $i=1$ in the above sums are such that \begin{align}
\label{esse}|{\mathcal S}(b_1 ) \cap K|-|{\mathcal S}(a_1) \cap K|&=-|{\mathcal S}(a_1) | + |{\mathcal S}(b_1) |-|({\mathcal S}(a_1) \bigtriangleup   {\mathcal S}(b_1) ) \setminus K  |\\ \notag\\
\notag &=\theta(a_1) u_1(r+u_1/2)-|({\mathcal S}(a_1) \bigtriangleup   {\mathcal S}(b_1) ) \setminus K  |.
\end{align}
A similar computation holds true for every $i=1,\dots,4$, and, using  \eqref{deltap} and \eqref{deltapp}, we get:
\begin{align}
\label{deltaa}
0&=\displaystyle  
\sum_{i=1}^{4} |{\mathcal S}(b_i ) \cap K|-\sum_{i=1}^{4} |{\mathcal S}(a_i) \cap K| \\
\notag
&=\sum_{i=1}^{4} |{\mathcal S}(b_i) |
-\sum_{i=1}^{4} |{\mathcal S}(a_i) |
-\sum_{i=1}^2   |({\mathcal S}(a_i) \bigtriangleup   {\mathcal S}(b_i) ) \setminus K  | +\sum_{i=3}^4   |({\mathcal S}(a_i) \bigtriangleup   {\mathcal S}(b_i) ) \setminus K  | \\
\notag
&=\theta(a_1) u_1(r+u_1/2)+\theta(a_2) u_2(r+u_2/2)- \theta(a_3) u_3(r'-u_3/2) - \theta(a_4) u_4(r'-u_4/2)\\
\notag
& \displaystyle-\sum_{i=1}^2   |({\mathcal S}(a_i) \bigtriangleup   {\mathcal S}(b_i) ) \setminus K  | +\sum_{i=3}^4   |({\mathcal S}(a_i) \bigtriangleup   {\mathcal S}(b_i) ) \setminus K  |\\
\notag
& \displaystyle=r' \left ( \sum_{i=1}^{i=4}  \ell(b_i \cap K) - \sum_{i=1}^{i=4}  \ell(a_i) \right)+(r-r')\theta(a_1)u_1+(r-r')\theta(a_2)u_2+
 \sum_{i=1}^{i=4} \theta(a_i) u_i^2/2\\
\notag
& \displaystyle+r'(r+u_1)\varphi_1+r'(r+u_2)\varphi_2+r'(r'-u_3)\varphi_3+r'(r'-u_4)\varphi_4\\
\notag
& \displaystyle
-\sum_{i=1}^2   |({\mathcal S}(a_i) \bigtriangleup   {\mathcal S}(b_i) ) \setminus K  | +\sum_{i=3}^4   |({\mathcal S}(a_i) \bigtriangleup   {\mathcal S}(b_i) ) \setminus K  |
\end{align}

\medskip

Observe that 
\begin{eqnarray}
\label{maggioo}
 |({\mathcal S}(a_i) \bigtriangleup   {\mathcal S}(b_i) ) \setminus K  | 
 \leq (r u_i+u_i^2/2) \varphi_i, \quad i= 1,2.
\end{eqnarray}
\setlength\fboxsep{30pt}
\setlength\fboxrule{0.0pt}
\begin{figure}
\fbox{{\includegraphics[scale=0.75, width=3. in]{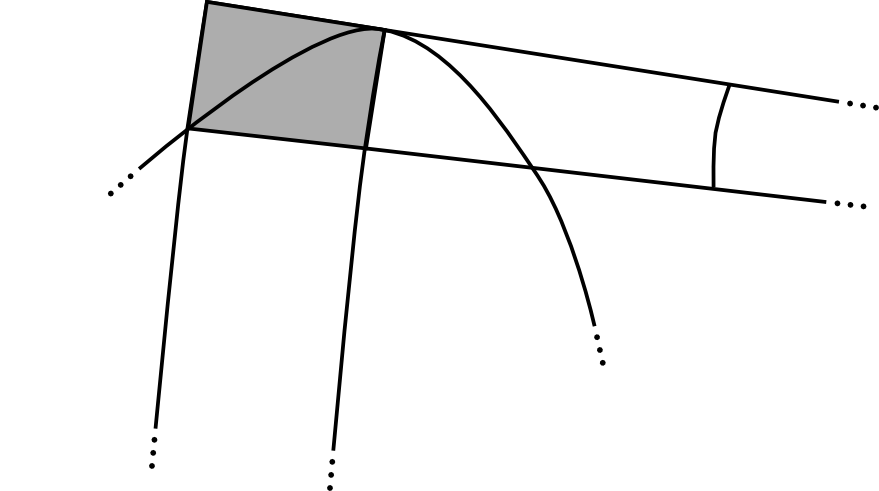}}}
\caption{Visualization of (\ref{maggioo}) }\label{deltaarea}
\begin{picture}(0,0)(150,-30)
\put(200,120){$\varphi_1$}
\put(188,82){$\partial K$}
\put(70,85){$b_1$}
\put(115,85){$a_1$}
\put(82,156){$B$}
\put(135,150){$C$}
\put(135,120){$D$}
\put(77,125){$A$}
\end{picture}
\end{figure}

Figure \ref{deltaarea} refers to the case $i=1$, and the inequality \eqref{maggioo} follows from the fact that 
the measure of $ABC$ is smaller than the measure of $ABCD$. 

From \eqref{deltap}, \eqref{deltapp}, \eqref{deltaa} and \eqref{maggioo} we have
\begin{equation*}
\displaystyle r' \left ( \sum_{i=1}^{i=4}  \ell(b_i \cap K) - \sum_{i=1}^{i=4}  \ell(a_i) \right) <
\varphi_1[u_1(r+u_1/2) -r' (r+u_1)] + \varphi_2[u_2(r+u_2/2) -r' (r+u_2)] <0.
\end{equation*}
Hence
$$
\ell((b_1 \cup b_4) \cap K) + \ell((b_2 \cup b_3) \cap K) <2L
$$
and the claim follows.

\end{proof}

\begin{corollary}\label{coroll}
If $E_1$ and $E_2$ are two circular sectors having the sides on $\partial K$, $E_1$ and $E_2$ minimize \eqref{quotient}, $|E_1|=|E_2|=\frac{|K|}{2}$, then the two sectors necessarily share one of the two sides. \end{corollary}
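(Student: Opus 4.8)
The plan is to deduce the statement from Lemma~\ref{intersect}, using the regularity information in Proposition~\ref{cianchi}. Denote by $a_i=\partial E_i\cap K$ the optimal arc bounding $E_i$, by $V_i$ the vertex of the sector $E_i$, and by $\sigma_i,\sigma_i'$ its two radial sides; by hypothesis $\sigma_i\cup\sigma_i'\subset\partial K$, and moreover $\partial E_i\cap\partial K=\sigma_i\cup\sigma_i'$. Two elementary facts will be used throughout. \emph{(i)} By Proposition~\ref{cianchi}(b),(d), $\partial K$ is differentiable at every terminal point of $a_1$ and of $a_2$, while at $V_i$ the two radial sides of $E_i$ meet at an opening angle at most $\sqrt3<\pi$; hence $V_i$ is a genuine corner of $\partial K$, so in particular $V_i$ is not a terminal point of any optimal arc. \emph{(ii)} At a point lying in the relative interior of a segment contained in $\partial K$, the boundary $\partial K$ is differentiable, with tangent line equal to the line carrying that segment.

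By Lemma~\ref{intersect} we distinguish two cases. If $a_1=a_2$, then $E_1$ and $E_2$ are circular sectors sharing the same bounding arc; since a circular sector is uniquely determined by its arc (its vertex being the centre of the underlying circle), $E_1=E_2$ and the two sectors trivially share both sides. (When $a_1=a_2$ is a straight segment we invoke the convention that segments are circular arcs of infinite radius, together with $|E_1|=|E_2|=|K|/2$.)

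In the remaining case $a_1$ and $a_2$ cross transversally at a single point $X$ in the interior of $K$. Since $a_1$ splits $K$ into $E_1$ and $K\setminus E_1$, and $a_2$ meets $a_1$ only at $X$, the interior of $a_2$ consists of one sub-arc lying in $E_1$ and one lying in $K\setminus E_1$; consequently one terminal point $P$ of $a_2$ lies in $\partial E_1\cap\partial K=\sigma_1\cup\sigma_1'$. By fact \emph{(i)}, $P\neq V_1$; moreover $P$ cannot be a terminal point of $a_1$, for otherwise Proposition~\ref{cianchi}(b) would force $a_1$ and $a_2$ to be mutually tangent at $P$, and two distinct circles tangent at a common point meet only there, contradicting the interior transversal crossing $X$. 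Hence $P$ lies in the relative interior of one radial side of $E_1$, say $\sigma_1$. On the other hand the segment joining $P$ to $V_2$ is a radial side of $E_2$, contained in $\partial K$ and issuing from $P$; by fact \emph{(ii)} it must lie on the tangent line to $\partial K$ at $P$, which is precisely the line carrying $\sigma_1$. Therefore $E_1$ and $E_2$ each have a radial side lying on one and the same line, i.e. they share a side, and the proof is complete.

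The substantive input is Lemma~\ref{intersect}: once the two optimal arcs are known to either coincide or meet in a single transversal interior point, the remaining argument is short and purely local, contrasting the regularity of $\partial K$ at terminal points of optimal arcs with the corner of $\partial K$ at the vertex of a sector. The only delicate point is excluding the degenerate configurations in which a terminal point of one optimal arc is simultaneously a terminal point (or a vertex) of the other; this is exactly where the orthogonality assertion of Proposition~\ref{cianchi}(b) — not merely the differentiability of $\partial K$ at such points — is needed.
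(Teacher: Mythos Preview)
Your proof is correct and follows the same approach as the paper: both deduce the result directly from Lemma~\ref{intersect}, with your version supplying the local details (ruling out that the terminal point $P$ coincides with a vertex or a terminal of $a_1$, then using the tangent-line argument to place a side of $E_2$ on the line of $\sigma_1$) that the paper's one-line proof leaves implicit. Your reading of ``share a side'' as ``have radial sides lying on a common straight portion of $\partial K$'' is the intended one and is exactly what the subsequent application in Proposition~\ref{CHLlemma} requires.
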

\begin{proof}
The proof is a straightforward consequence of the previous lemma since the optimal arcs enclosing the circular sectors $E_1$ and $E_2$ have to cross each other in one and only one point.

\end{proof}

We are now ready to characterize the set maximizing $\mathcal{C}(\cdot)$. First we need the following compactness result.

\begin{lemma}
\label{max}
Let $C_0 = \sup_{K} \mathcal{C}(K)$, then there exists a convex set $K_0$ such that 
$\mathcal{C}(K_0) = C_0$
\end{lemma}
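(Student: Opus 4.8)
\textbf{Proof plan for Lemma \ref{max}.}

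The plan is to obtain $K_0$ via a direct method together with a normalization that rules out degeneration. First, since $\mathcal{C}(\cdot)$ is invariant under homotheties (scaling a set by $\lambda$ multiplies $Per(G;K)^2$ by $\lambda^2$ and $|G|$ by $\lambda^2$), I would fix the scale by imposing $|K|=1$ on all competitors; this does not change the supremum $C_0$. Next take a maximizing sequence $\{K_j\}$ of open convex sets with $|K_j|=1$ and $\mathcal{C}(K_j)\to C_0$. The first thing to check is that $C_0$ is finite and positive: positivity is clear because the relative isoperimetric inequality gives $\mathcal{C}(K)>0$ for every fixed $K$, and finiteness follows from Theorem \ref{main1}'s a priori bound — but since we are trying to reprove that circle of ideas, it is cleaner to invoke the elementary bound \eqref{sant}, namely $Per(G;K)\le L\le 3^{1/4}|K|^{1/2}$ for the shortest bisecting curve, together with $|G|=|K|/2$, which yields $\mathcal{C}(K)\le 2\cdot 3^{1/2}$ uniformly. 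So $C_0$ is a well-defined positive finite number.

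The second step is to control the shape of the $K_j$ so that Blaschke's selection theorem applies with a nondegenerate limit. After a rigid motion we may assume each $K_j$ has its minimal-width direction vertical, so that $K_j\subset\R\times[0,w(K_j)]$ with $w(K_j)=w_j$. If $w_j\to 0$ along a subsequence, then $K_j$ collapses to a segment; but for a thin convex set one can bisect it by a short vertical chord: placing a vertical chord at the point where exactly half the area lies to its left gives a bisecting segment of length at most $w_j$, hence $\mathcal{C}(K_j)\le Per^2/(|K_j|/2)\le w_j^2/(1/2)\to 0$, contradicting $\mathcal{C}(K_j)\to C_0>0$. Therefore $\inf_j w_j>0$. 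On the other hand the diameter is bounded: $|K_j|=1$ and $w_j\ge w_0>0$ force $\diam(K_j)\le 1/w_0$ (a convex set of width $\ge w_0$ and area $1$ fits, after the motion, in a strip of height $\le \diam$ and width $w_0$, so $\diam\cdot w_0\ge$ area is false in general — instead use that $K_j$ contains a triangle of base $\diam(K_j)$ and height $\ge c\,w_j$, giving $\diam(K_j)\le C/w_0$). Thus all $K_j$, suitably translated, lie in a fixed ball, so by Blaschke's selection theorem a subsequence converges in the Hausdorff metric to a convex body $K_0$ with $w(K_0)\ge w_0>0$; in particular $K_0$ has nonempty interior and $|K_0|=1$ (area is continuous along Hausdorff-convergent convex sets with uniformly bounded diameter and inradius bounded below).

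The final step is upper semicontinuity of $\mathcal{C}$ along this sequence, which is exactly the content already proved in Lemma \ref{succ}: applying that lemma with the sequence $K_j\to K_0$ gives $\mathcal{C}(K_0)=\lim_j\mathcal{C}(K_j)=C_0$. (Strictly, Lemma \ref{succ} is stated for $K_n$; here the role of $K$ is played by $K_0$ and of $K_n$ by $K_j$, and the convergence hypothesis is satisfied; one only needs the open set $K_0=\mathrm{int}(K_0)$, which holds since $K_0$ has nonempty interior.) Hence $K_0$ realizes the supremum.

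\textbf{Main obstacle.} The analytic heart — continuity of $\mathcal{C}(\cdot)$ under Hausdorff convergence — is already handled by Lemma \ref{succ}, so the only real work is the compactness/nondegeneracy argument: showing a maximizing sequence cannot collapse to a lower-dimensional set. The key quantitative input is that a very thin convex set has a very short bisecting chord, which forces $\mathcal{C}\to 0$ and contradicts $C_0>0$; making the width lower bound and the resulting diameter bound precise (so that Blaschke applies) is the step that requires the most care, though it is elementary.
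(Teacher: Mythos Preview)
Your proposal is correct and follows essentially the same route as the paper: normalize by $|K|=1$ using scale invariance, bound the width from below via the observation that a thin convex set admits a short bisecting chord (the paper writes this as $2w(K_n)^2\ge\mathcal{C}(K_n)|K_n|$), deduce a diameter bound, apply Blaschke's selection theorem, and conclude with the continuity of $\mathcal{C}$ from Lemma~\ref{succ}. The only cosmetic difference is that the paper invokes the clean inequality $d(K)\le 2|K|/w(K)$ from \cite{SA} for the diameter bound, whereas you reach the same conclusion through a slightly more ad hoc triangle-containment argument.
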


\begin{proof}
Let us consider a maximizing sequence of convex sets $\{K_n\}_{n\in\N}$. Since $\mathcal{C}(\cdot)$ is invariant
under homothety, we can suppose that $|K_n|=1$ for $n\in\N$. We claim that the diameter $d(K_n)$ of $K_n$
can be uniformly bounded. Indeed, see \cite{SA},
\begin{equation*}
d(K_n)\le 2\frac{|K_n|}{w(K_n)},
\end{equation*}
where $w(K)$ denotes the width of a convex set $K$, that is the minimum distance of two nonidentical parallel tangent planes to $K$. Furthermore, by definition of width and $\mathcal{C}(\cdot)$,  we have
\begin{equation*}
2 w(K_n)^2\ge  \mathcal{C}(K_n){|K_n|}
\end{equation*}
and the claim is proven.

The compactness of $\{K_n\}_{n\in \N}$ in $\mathcal K$ (the class of planar convex sets endowed with the Hausdorff metric) is provided by Blaschke selection theorem, see \cite[p. 50]{Sc}, and the proof of the Lemma follows from the continuity of $\mathcal{C}(\cdot)$ in $\mathcal K$ as stated in Lemma \ref{succ}.

\end{proof}

\begin{definition}[CHL - set]\label{CHLdef}
We say that a convex set $K$ is a  set with constant halving length (CHL - set ) if 
 each point of its boundary is a terminal point of an optimal arc.
\end{definition}

\begin{proposition}\label{CHLlemma}
If $K^*$ is a convex set with $\mathcal{C}(K^*) =\sup_{K} \mathcal{C}(K)$, then $K^*$ is a CHL - set.
\end{proposition}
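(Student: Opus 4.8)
The plan is to argue by contradiction, extracting from a non-CHL maximizer a convex set with strictly larger $\mathcal{C}$. Suppose $\mathcal{C}(K^*)=\sup_K\mathcal{C}(K)=:C_0$ but some $P_0\in\partial K^*$ is not a terminal point of any optimal arc. The guiding intuition is that the part of $\partial K^*$ near $P_0$ is ``unused'' by the competition defining $\mathcal{C}(K^*)$: if I shave a tiny cap of area $\delta$ off $K^*$ near $P_0$, the area drops by $\delta$ while the length of the shortest bisector should drop by only $O(\delta^2)$, so $\mathcal{C}=2(\text{length})^2/|K|$ strictly increases.

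First I would localise. By Corollary \ref{compact} the set $\mathcal{E}(K^*)$ is compact, and by hypothesis $P_0\notin\mathcal{E}(K^*)$; moreover every optimal arc lies, except for its two endpoints, in the open set $K^*$, so $P_0$ lies on no optimal arc at all. Since $\mathcal{F}(K^*)$ is compact in the Hausdorff metric, the union of all optimal arcs is a compact set at some distance $d_0>0$ from $P_0$. Now fix small $\delta>0$, cut $K^*$ by a straight line near $P_0$ so as to remove a cap $C_\delta$ of area $\delta$ (with diameter $o(1)$ as $\delta\to0$), and put $\tilde K_\delta:=K^*\setminus\overline{C_\delta}$, an open convex set of area $|K^*|-\delta$. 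By Proposition \ref{cianchi} there is a convex minimizer $\tilde E_\delta$ of $\mathcal{Q}(\cdot;\tilde K_\delta)$ with $|\tilde E_\delta|=|\tilde K_\delta|/2$ whose relative boundary $\gamma_\delta:=\partial\tilde E_\delta\cap\tilde K_\delta$ is a circular arc or segment, so $\mathcal{C}(\tilde K_\delta)=2\,\ell(\gamma_\delta)^2/(|K^*|-\delta)$. Using Lemma \ref{succ} ($\tilde K_\delta\to K^*$ in the Hausdorff metric) together with the localisation above, I would show that for all small $\delta$ the arc $\gamma_\delta$ lies in $K^*$ at distance $\ge d_0/2$ from $C_\delta$, with both endpoints in $\partial K^*\setminus\overline{C_\delta}$: otherwise, along a sequence $\delta_k\to0$, Blaschke selection and Lemma \ref{succ} would produce a minimizer $E_\infty\in\mathcal{F}(K^*)$ whose optimal arc has a terminal point at $P_0$, contradicting $P_0\notin\mathcal{E}(K^*)$.

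The core step is a cheap re-balancing. Since $\gamma_\delta$ avoids $C_\delta$, the cap lies on one side of $\gamma_\delta$ inside $K^*$, so $\gamma_\delta$ splits $K^*$ into a piece of area $\tfrac12(|K^*|+\delta)$ containing $C_\delta$ and a piece of area $\tfrac12(|K^*|-\delta)$. I would then pick a point of $\gamma_\delta$ at a distance bounded below uniformly in $\delta$ from $\partial K^*$ (such a point exists because $\gamma_\delta$ is Hausdorff-close to an optimal arc whose interior is a positive distance from $\partial K^*$, uniformly over the compact family of optimal arcs) and replace a short fixed-width sub-arc around it by a shallow bump bulging into the larger piece and enclosing exactly the extra area $\delta/2$; this produces a curve $\gamma_\delta'$, still contained in the open set $K^*$ with its endpoints on $\partial K^*$, which bisects $K^*$. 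A bump of fixed width enclosing area $\delta/2$ has height $O(\delta)$ and hence length excess $O(\delta^2)$, so $\ell(\gamma_\delta')=\ell(\gamma_\delta)+O(\delta^2)$. As $\gamma_\delta'$ bounds a subset of $K^*$ of measure $|K^*|/2$ from inside, $C_0=\mathcal{C}(K^*)\le 2\,\ell(\gamma_\delta')^2/|K^*|$, i.e. $\ell(\gamma_\delta)^2\ge \tfrac12 C_0|K^*|-O(\delta^2)$, whence
\[
\mathcal{C}(\tilde K_\delta)=\frac{2\,\ell(\gamma_\delta)^2}{|K^*|-\delta}\ \ge\ \frac{C_0\,|K^*|-O(\delta^2)}{|K^*|-\delta}\ >\ C_0
\]
for all sufficiently small $\delta$, contradicting the definition of $C_0$ and proving that $K^*$ is a CHL-set.

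I expect the main obstacle to be this last step: the re-balancing of $\gamma_\delta$ into an exact bisector of $K^*$ must cost $o(\delta)$ in length, and only a genuinely two-dimensional modification achieves this — translating or dilating the arc to restore the area balance would cost $\Theta(\delta)$, which does not beat the $\Theta(\delta)$ gain from the area drop, whereas a wide shallow interior bump costs only $O(\delta^2)$. The routine but slightly delicate points are checking that the bump stays inside $K^*$ (and inside the larger piece, away from $C_\delta$) uniformly in $\delta$, and justifying via Lemma \ref{succ} and Blaschke's theorem that shaving the cap near $P_0$ does not disturb the (perturbed) shortest bisector — which is exactly where the hypothesis $P_0\notin\mathcal{E}(K^*)$ is used.
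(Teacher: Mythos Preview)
Your argument is correct and shares the opening move with the paper (cut off a cap near a non-terminal boundary point and show $\mathcal{C}$ goes up), but the core comparison step is genuinely different. The paper exploits the \emph{relaxed} definition $\mathcal{C}(K)=\inf_{0<|G|\le|K|/2}\mathcal{Q}(G;K)$: once the optimal half-area set $E\in\mathcal{F}(\tilde K)$ is known to have its arc inside $K^*$, one has $|E|=|\tilde K|/2<|K^*|/2$ and $Per(E;\tilde K)=Per(E;K^*)$, so $E$ is itself an admissible competitor for $\mathcal{C}(K^*)$ and
\[
\mathcal{C}(\tilde K)=\frac{Per(E;K^*)^2}{|E|}\ \ge\ \mathcal{C}(K^*)
\]
with no rebalancing at all. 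The price is that this yields only $\ge$; the equality case forces $E$ to be a circular sector with sides on $\partial K^*$ (Proposition~\ref{cianchi}(c)), and the paper must then dispose of one, two, or three optimal sectors separately (three sectors being the equilateral triangle). Your bump construction, by contrast, secures a strict inequality in one stroke and so bypasses the sector case analysis entirely, at the cost of building the interior bump and controlling uniformly in $\delta$ that it stays inside $K^*$ and inside the larger piece. One point you gloss over: the claim that $C_\delta$ can be taken with diameter $o(1)$ fails when $\partial K^*$ contains a straight segment through $P_0$, since any chord cutting off a cap there has length bounded below by that segment; the paper handles exactly this obstruction in Claim~1, case~(ii), by passing to the opposite arc of $\partial K^*$. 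This is a minor repair rather than a real gap.
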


\begin{proof}
Let $K^*$ be a convex set with $\mathcal{C}(K^*) =\sup_{K} \mathcal{C}(K)$. The strategy of the proof consists in showing that, if $K^*$ is not a CHL-set, then it is always possible to ``cut off a piece of $K^*$'' in such a way that $\mathcal{C}(\cdot)$ strictly increases.
We argue by contradiction supposing  that $K^*$ is not a CHL-set, that is, $\partial K^*\setminus \mathcal E(K^*)$ is nonempty.

\begin{claim}\label{claim1}
It is possible to find an open half-plane $H$ such that $H\cap K^*\not=\emptyset$ and $H\cap \partial K^*$ does not contain points of $\mathcal E(K^*)$.
\end{claim}
Because of Corollary \ref{compact}, it is possible to find an open connected arc $a$ on $\partial K^*$, not containing points of $\mathcal E(K^*)$, such that its ending points $P_1, P_2$ belong to  $\mathcal E(K^*)$.
If we consider the line $l$ passing through $P_1$   and $P_2$, two cases may occur.
\begin{enumerate}[(i)]
\item The set $l\cap \partial K^*$ is not a segment.
\item The set $l\cap \partial K^*$ is a segment.
\end{enumerate}

In case (i) the Claim \ref{claim1} is proved choosing $H$ as the open half-plane bounded by $l$ and containing $a$.

In case (ii) we consider the points $P'_1$   and $P'_2$ which are the second terminal points of the optimal arcs passing through $P_1$   and $P_2$ and we denote by $a'$ the arc of $\partial K^*$ with end points $P'_1$   and $P'_2$ (not containing $P_1$   and $P_2$). Clearly, if $l'$ is the line passing through $P'_1$   and $P'_2$, the set $l'\cap \partial K^*$ cannot be a segment since two intersecting circular arcs cannot be both orthogonal to two different lines.
Claim \ref{claim1} is then proved choosing $H$ as the half-plane bounded by $l'$ and containing $a'$. Indeed,  the arc $a'$ does not contain points of $\mathcal E(K^*)$. This comes from the fact that any optimal arc $b$ having a terminal on $a'$ cannot have the second terminal on $a$. Hence $b$ has to intersect the optimal arc connecting $P_1$ to $P'_1$ (or $P_2$ to $P'_2$) in two points, but this is not possible in view of Lemma \ref{intersect}.

\begin{claim}\label{claim2}
It is possible to find an open half-plane $H'$ contained in $H$ such that $H'\cap K^*\not=\emptyset$ and $\mathcal{C}(K^* \setminus {\overline {H'}} )\ge \mathcal{C}(K^*)$.
\end{claim}

We denote by $\{H_t\}_{t\ge 0}$,  the one-parameter family of open half-planes  such that $\dist(H_t,\partial H) =t$. Obviously, $H_0 = H$,  $H_{t_1} \subset H_{t_2}$ if $t_1 > t_2$.

Lemma \ref{succ} and Corollary \ref{compact} ensure that  there exists ${\bar t}$ such that $H_{\bar t}\cap K^*\not=\emptyset$ and $\mathcal E(K^* \setminus {\overline {H_{\bar t}}})\subset\partial K^* $.
Choosing $H' = H_{\bar t}$ we observe that any optimal cut of $\tilde K=K^* \setminus {\overline {H'}}$  touches $\partial K^*$ and bounds a portion of $K^*$ with measure strictly smaller than $|K^*|/2$. Therefore there exists a set $E\in\mathcal F(\tilde K)$ such that $Per(E;\tilde K) =Per(E;K^*)$. Using the definition of $\mathcal{C}(\cdot)$ and Proposition \ref{cianchi}, we get
\begin{equation}
\label{CK}
\mathcal{C}(\tilde K)=  \dfrac{Per(E;\tilde K)^2}{|E|}=\dfrac{Per(E; K^*)^2}{|E|}\ge \mathop{\inf_{G \subset K^*}}_{0<|G| \le \frac{|K^*|}{2}} \mathcal{Q}(G;K^*)=\mathcal{C}(K^*).
\end{equation}
Claim \ref{claim2} is proved. 

\medskip

By Claim \ref{claim2}, if
$$\mathcal{C}(K^* \setminus {\overline {H'}} )> \mathcal{C}(K^*),$$
we get a contradiction and the proposition is proved.

On the other hand, it may happen that
$$\mathcal{C}(K^* \setminus {\overline {H'}} )= \mathcal{C}(K^*).$$
This means that \eqref{CK} holds as a chain of equalities. But then, in view of Proposition \ref{cianchi}\emph{(c)}, the set $E$ is a circular sector with sides on $\partial K^*$.
According to Corollary \ref{coroll} there exist up to three optimal circular sectors in $\mathcal F(K^*)$ with sides on $\partial K^*$. We also observe that any optimal circular sectors in $\mathcal F(K^*)$ have the same opening angle $\alpha=\mathcal{C}(K^*)/2$. 

Three cases may occur.
\begin{enumerate}[(a)]
\item There exists only one optimal circular sector in $\mathcal F(K^*)$.
\item There exist only two optimal circular sectors in $\mathcal F(K^*)$.
\item There exist  three optimal circular sectors in $\mathcal F(K^*)$.
\end{enumerate}

In case (a) we proceed as in Claim \ref{claim2}. We cut a suitably small piece of $K^*$ close to the vertex of the circular sector by a line which is orthogonal to the bisector of the sector,
and it is easy to prove that \eqref{CK} holds as a strict inequality. The contradiction follows.

Similarly, in case (b) we cut off two suitably small pieces of $K^*$, with equal measure, close to the two vertices of the circular sectors by two lines which are orthogonal to the bisectors of each sector,
and it is easy again to realize that \eqref{CK} holds as a strict inequality. The contradiction follows.

In case (c) we observe that $K^*$ has to be an equilateral triangle and the contradiction immediately follows (see \cite{C}), since $\mathcal{C}(K^*)$ would be equal to $2\pi/3$. But $2\pi/3<8/\pi$, the value  of $\mathcal{C}(\cdot)$ on a disc.

\end{proof}

\subsection{Regularity and representation of CHL-sets}

Let $K$ be a CHL-set. As a consequence of Proposition \ref{attainable} we obtain the following regularity result for $K$.

\begin{lemma}\label{disk_cond}
Any CHL-set  $K$ is of class $C^{1,1}$.
\end{lemma}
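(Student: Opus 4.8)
The plan is to show that at every boundary point $P$ of a CHL-set $K$ there is a uniform two-sided ball condition, which is well known to be equivalent to $C^{1,1}$ regularity of $\partial K$. Since $K$ is convex, an exterior tangent line (hence a trivial exterior ball condition with infinite radius, which is all convexity gives for free) exists at every point; the content is the interior ball condition with a radius bounded below uniformly in $P$. Proposition \ref{attainable} already supplies, at each $P \in \partial K$, \emph{some} interior disc condition, because $P$ is by definition a terminal point of an optimal arc and Proposition \ref{attainable} applies to the convex minimizer whose relative boundary is that optimal arc. So the only thing left is to make the radius in that interior disc condition uniform over $\partial K$.

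First I would record the quantitative form of Proposition \ref{attainable}: reading off the contradiction argument there, the interior disc condition at a terminal point $P$ holds with a radius depending only on $\mathcal{C}(K)$ and on $L$, the common length of the optimal arcs (equivalently on $\mathcal{C}(K)$ and $|K|$, since $L^2 \le \mathcal{C}(K)\,|K|/2$ up to the normalization used there, and $L$ is bounded below because the arcs are orthogonal to $\partial K$ at both endpoints and $K$ has positive width). Indeed, the choice of $M$ in \eqref{emme} depends only on $L$ and $\mathcal{C}(K)$, and the failure of an interior disc of radius $\rho$ produces, for $M$ as in \eqref{emme}, a point $P_n$ with $\rho_n\theta(\rho_n) > M(\rho_n-r)^2$; contrapositively, an interior disc of radius $r_0 = r_0(M) = r_0(L,\mathcal{C}(K))$ does fit. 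Because all optimal arcs have the same length $L$ and $\mathcal{C}(K)$ is a single number attached to $K$, this radius $r_0$ is the \emph{same} for every terminal point, i.e. for every $P \in \partial K$.

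Then I would combine this with Corollary \ref{compact}: the set $\mathcal{E}(K)$ of all terminal points of all optimal arcs is compact, and for a CHL-set $\mathcal{E}(K) = \partial K$. Hence every point of $\partial K$ admits an interior disc of the fixed radius $r_0$, and every point admits an exterior supporting line by convexity; the two-sided condition (interior ball of radius $r_0$, exterior ball of radius $+\infty$) is exactly the statement that the signed distance function to $\partial K$ is $C^{1,1}$ near $\partial K$, equivalently $\partial K \in C^{1,1}$. One should also note the local picture at $P$ from the proof of Proposition \ref{attainable}: the optimal arc meets $\partial K$ orthogonally, so in the polar frame used there $\partial K$ is a graph $\theta = \theta(\rho)$ with $\theta(r) = 0$ and the interior-disc estimate gives $|\theta(\rho)| \le C|\rho - r|^2/\rho$, which pins down a genuine (not merely one-sided) quadratic bound on the deviation of $\partial K$ from its tangent line, uniformly in $P$ — this is precisely the $C^{1,1}$ estimate.

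The main obstacle is the uniformity, i.e. checking that the radius extracted from the argument in Proposition \ref{attainable} really depends only on invariants of $K$ (such as $\mathcal{C}(K)$ and $|K|$) and not on the individual terminal point or individual optimal arc. This requires (i) a uniform lower bound on the length $L$ of optimal arcs — clear since an arc orthogonal to $\partial K$ at both ends and bisecting the area cannot be arbitrarily short for fixed $|K|$ and fixed inradius/width, both of which are controlled — and (ii) observing that the quantity $M$ in \eqref{emme}, and hence the resulting interior radius, is a fixed function of $L$ and $\mathcal{C}(K)$ alone. A secondary, purely bookkeeping point is the degenerate case in which the optimal arc at $P$ is a straight segment or $P$ is the vertex of an optimal circular sector; but Proposition \ref{cianchi}(c) together with the already-handled construction in Proposition \ref{attainable} covers these, and in any case a CHL-set cannot have all its boundary points be sector vertices, so such points form a negligible subset around which the uniform estimate from neighbouring non-segment terminal points still applies by the compactness in Corollary \ref{compact}.
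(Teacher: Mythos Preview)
Your proposal is correct and follows essentially the same route as the paper: extract from the proof of Proposition~\ref{attainable} that the interior-disc radius depends only on $L$ and $\mathcal{C}(K)$, observe that both are fixed invariants of the given CHL-set $K$ (so the radius is automatically uniform over $\partial K$; your side argument for a ``uniform lower bound on $L$'' is unnecessary since all optimal arcs of $K$ have one common length), and conclude $C^{1,1}$ from the uniform interior ball condition together with convexity. The paper's proof is terser---it simply writes down the explicit bound $\bar R < L\big/\big(4(\tfrac{\pi}{24}\mathcal{C}(K)+1)\big)$ and notes its independence of $P$---and does not invoke Corollary~\ref{compact} or discuss degenerate cases, but the key step is identical.
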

\begin{proof}
In view of Proposition \ref{attainable} any CHL-set $K$ is of class  $C^{1}$. We have to prove that it is of class $C^{1,1}$. For every $P\in\partial K$, we denote by $D_{R}(P)$ the open disc  of radius $R$, tangent to $\partial K$ in $P$, having nonempty intersection with $K$. Using the arguments of the proof of 
Proposition \ref{attainable}, if $L$ is the length of the optimal arcs of $K$, we deduce that there exists a neighborhood $\mathcal I$ of $P$ such that for every
\begin{equation*}
\bar R<\frac1{\frac4L\left(\frac\pi{24} \mathcal{C}(K)+1\right)}
\end{equation*}
we have  $D_{\bar R}(P)\cap\mathcal I\subset K$. 
Observing that the choice of $\bar R$ is independent of $P$, the lemma follows.

\end{proof}

\begin{remark}
We observe that, employing   \eqref{sant}, any CHL-set having optimal arcs of length $L$, satisfies a uniform internal disc condition depending only on $L$. One can choose, for instance,
the radius of the disc
\begin{equation}
\label{unifdisc}
\tilde R=\frac{L}{ 8\left(\frac{\pi\sqrt{3}}{12} +1\right)}.
\end{equation} 

\end{remark}

For any given $\boldx\in\partial K$ there exists a unique $\boldy\in\partial K$ such that $\boldx$ and $\boldy$ are terminal points of an optimal arc $\Gamma$. In view of Lemma \ref{succ} such correspondence is a continuous bijection from $\partial K$ into itself. Moreover, because of Lemma \ref{intersect}, $\boldmu(\boldx) = \dfrac{\boldy-\boldx}{|\boldy-\boldx|}$ is a continuous invertible map from $\partial K$ into $\mathcal{S}^1$. We observe that, because of Proposition \ref{cianchi}\emph{(d)}, $|\boldy-\boldx|$ is bounded away from 0. Therefore, for any given $\sigma\in[-\pi,\pi)$, we denote by $\boldx(\sigma)$ and $\boldy(\sigma)$ the two terminal points of the unique optimal arc $\Gamma_\sigma$ such that
$$\dfrac{\boldy(\sigma)-\boldx(\sigma)}{|\boldy(\sigma)-\boldx(\sigma)|}=(-\sin \sigma,\cos\sigma).$$
It is trivial to observe that for $\sigma\in[0,\pi)$ one has $\Gamma_\sigma=\Gamma_{\sigma-\pi}$, $\boldx(\sigma)=\boldy(\sigma-\pi)$, and $\boldy(\sigma)=\boldx(\sigma-\pi)$. 
Using Lemma \ref{disk_cond},
for any $P\in \partial K$  the unit tangent vector to $\partial K$ in $P$ oriented in anti-clockwise way, $\boldtau(P)$, is a lipschitz function of $P$. Finally, we define the signed opening angle $\theta(\sigma)$ of the optimal arc having terminal points in $\boldx(\sigma)$ and $\boldy(\sigma)$ as the angle between  $\boldtau(\boldx(\sigma))$ and $-\boldtau(\boldy(\sigma))$, that is, the angle in the interval $[-\pi,\pi)$ such that an anti-clockwise $\theta(\sigma)$-rotation brings $\boldtau(\boldx(\sigma))$ into $-\boldtau(\boldy(\sigma))$. We refer to Figure \ref{CHL_fig} for notation.

\begin{figure}
\includegraphics[height=80mm]{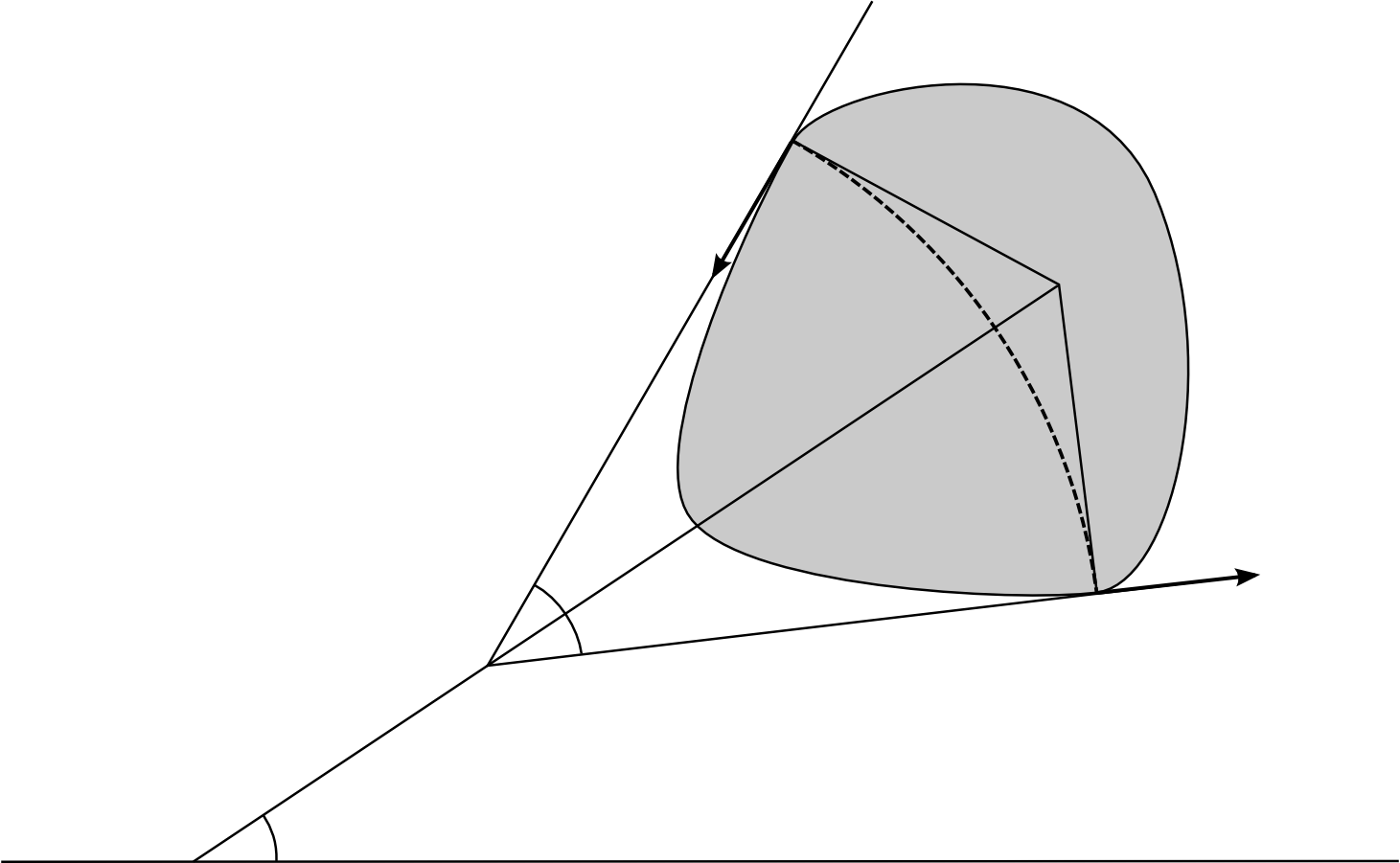}\caption{Notation for CHL-set}\label{CHL_fig}
\begin{picture}(0,0)
 \put(-33,120){$\theta/2$}
  \put(-25,100){$\theta/2$}
 \put(93,196){$\boldm$}
 \put(135,180){$K$}
\put(130,102){${\boldtau(\boldx)}$}
\put(104,99){${\boldx}$}
\put(-15,202){${\boldtau(\boldy)}$}
\put(16,230){${\boldy}$}
\put(-105,45){$\sigma$}
\end{picture}

\end{figure}

We prove the following lemma.
\begin{lemma}\label{regular_repr}
With the above notation, the function $\theta(\sigma)$ and the representations of $\partial K$, $\boldx(\sigma)$, $\boldy(\sigma)$, are of class $C^{0,1}$.
\end{lemma}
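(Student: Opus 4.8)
The plan is to establish the Lipschitz regularity of $\theta(\sigma)$, $\boldx(\sigma)$, $\boldy(\sigma)$, and the boundary representation simultaneously, exploiting the geometry already forced by Proposition \ref{cianchi} and Lemma \ref{intersect}. First I would fix $\sigma_0$ and a nearby $\sigma$ and compare the two optimal arcs $\Gamma_{\sigma_0}$ and $\Gamma_\sigma$: by Lemma \ref{intersect} they cross transversally in exactly one interior point, and both meet $\partial K$ orthogonally at their endpoints. Since $K$ is of class $C^{1,1}$ (Lemma \ref{disk_cond}) and satisfies a uniform internal disc condition of radius $\tilde R$ (see \eqref{unifdisc}), the unit normal $\boldn$ to $\partial K$ is Lipschitz in arclength. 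The optimal arc $\Gamma_\sigma$ is the circular arc (possibly a segment) of fixed length $L$ that starts at $\boldx(\sigma)$ in the normal direction, ends at $\boldy(\sigma)$ in the normal direction, and encloses area $|K|/2$. I would write these three conditions — orthogonality at $\boldx$, orthogonality at $\boldy$, and the area constraint — as a system of equations $F(\boldx,\boldy,\rho;\sigma)=0$ for the two endpoints and the (signed) curvature $1/\rho$ of the arc, with $\sigma$ parametrizing the prescribed direction of the chord $\boldy-\boldx$.

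The core step is then an inverse/implicit-function argument for this finite-dimensional system, but carried out only at the level of Lipschitz estimates rather than smooth dependence, since $\partial K$ is merely $C^{1,1}$. Concretely, I would show a quantitative transversality bound: if $\sigma$ is close to $\sigma_0$, then $|\boldx(\sigma)-\boldx(\sigma_0)|+|\boldy(\sigma)-\boldy(\sigma_0)|+|\theta(\sigma)-\theta(\sigma_0)|\le C|\sigma-\sigma_0|$. The mechanism is that a displacement of the endpoints by $\delta$ along $\partial K$ rotates the chord direction by an amount comparable to $\delta$ (because $|\boldy-\boldx|$ is bounded below by Proposition \ref{cianchi}(d) and the opening angle is bounded by $\sqrt3$), while the orthogonality and area constraints rigidly determine $\theta$ and the two endpoints from the chord direction. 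In other words, the map $\sigma\mapsto(\boldx(\sigma),\boldy(\sigma))$ is the inverse of a map whose ``derivative'' (in the a.e. sense) is bounded below; bi-Lipschitz-ness follows once one checks that this lower bound is uniform, which is exactly where the compactness from Corollary \ref{compact} and the uniform disc condition enter. Once $\boldx(\sigma)$ and $\boldy(\sigma)$ are Lipschitz, $\boldtau(\boldx(\sigma))$ and $\boldtau(\boldy(\sigma))$ are Lipschitz (composition of the Lipschitz Gauss map with a Lipschitz curve), hence $\theta(\sigma)$, being the angle between them, is Lipschitz; and the boundary $\partial K$, being traced by the Lipschitz curve $\sigma\mapsto\boldx(\sigma)$ with derivative bounded below, inherits a $C^{0,1}$ parametrization.

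The main obstacle I expect is the uniform lower bound on the ``speed'' of $\sigma\mapsto\boldx(\sigma)$, i.e.\ ruling out that a tiny change in the chord direction $\sigma$ forces a macroscopic jump of the endpoints, or conversely that moving the endpoints a lot changes $\sigma$ very little. Degenerate configurations to watch are: the arc nearly tangent to $\partial K$ along a flat portion (excluded by strict orthogonality plus the internal disc condition), and two endpoints nearly coalescing (excluded by the lower bound on $|\boldy-\boldx|$). I would handle this by a contradiction/compactness argument: if no uniform constant worked, one could extract sequences $\sigma_k\to\sigma_k'$ with endpoints staying a fixed distance apart, and by Corollary \ref{compact} pass to a limit pair of distinct optimal arcs with the \emph{same} chord direction — but Lemma \ref{intersect} forbids two distinct optimal arcs from being parallel at their crossing, giving the contradiction. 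This reduces the whole lemma to the already-established structural facts, and the remaining estimates are routine plane geometry of circular arcs meeting a $C^{1,1}$ curve orthogonally.
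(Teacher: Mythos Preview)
Your overall intuition is sound, but the route you take is considerably heavier than what the paper does, and the compactness step you lean on has a genuine gap. The paper's proof is a single elementary estimate: for nearby parameters $\sigma$ and $\sigma+\varepsilon$, one looks at the two chords $\boldx(\sigma)\boldy(\sigma)$ and $\boldx(\sigma+\varepsilon)\boldy(\sigma+\varepsilon)$, which meet at angle $\varepsilon$, and writes down (essentially from the sine rule in the two small triangles near the endpoints) the identity
\[
\sin\varepsilon=\frac{\cos\bigl(\theta(\sigma+\varepsilon)/2\bigr)}{|\boldx(\sigma)-\boldy(\sigma)|}\bigl(|\boldx(\sigma+\varepsilon)-\boldx(\sigma)|+|\boldy(\sigma+\varepsilon)-\boldy(\sigma)|\bigr)+o(\cdot).
\]
Here the angle $\alpha_\varepsilon$ between the new chord and the tangent to $\partial K$ at its endpoints satisfies $\sin\alpha_\varepsilon=\cos(\theta(\sigma+\varepsilon)/2)$ by orthogonality of the arc. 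Since $|\theta|\le\sqrt3$ (Proposition~\ref{cianchi}(d)) and $|\boldx-\boldy|\le L$, dividing by $\varepsilon$ gives the Lipschitz bound on $\boldx(\cdot)$ and $\boldy(\cdot)$ in one line; Lipschitz continuity of $\theta$ then follows because $\boldtau$ is Lipschitz on the $C^{1,1}$ boundary. Notice this is exactly the ``mechanism'' you name in passing (``a displacement of the endpoints by $\delta$ rotates the chord direction by an amount comparable to $\delta$''), but made explicit and quantitative; once you have this formula there is nothing more to do.

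The gap in your argument is the compactness step. You write that if no uniform Lipschitz constant worked, ``one could extract sequences $\sigma_k\to\sigma_k'$ with endpoints staying a fixed distance apart'' and reach a contradiction with Lemma~\ref{intersect}. But the negation of a uniform Lipschitz bound is that the \emph{ratio} $|\boldx(\sigma_k)-\boldx(\sigma_k')|/|\sigma_k-\sigma_k'|$ blows up; this is perfectly compatible with $|\boldx(\sigma_k)-\boldx(\sigma_k')|\to 0$ (indeed continuity, which you already have, forces this). In the limit you get a single arc, not two distinct ones, and the contradiction evaporates. Compactness alone cannot upgrade continuity to Lipschitz; you need a quantitative local estimate, and that estimate is precisely the sine-rule formula above. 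So the implicit-function and contradiction framing are superfluous: the direct geometric computation already delivers the uniform bound you were trying to extract by soft means.
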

\begin{proof}
By definition, for $\sigma\in[-\pi,\pi)$, $\boldx(\sigma)$ and $\boldy(\sigma)$  are continuous anti-clockwise representations of $\partial K$.
Referring to Figure \ref{sin}, for $\varepsilon>0$ sufficiently small, 
we get
\begin{align}\label{sin2}
\sin \varepsilon =& \frac{ \sin \alpha_{\varepsilon} }{ |\boldx(\sigma) -\boldy(\sigma)|}(|\boldx(\sigma+\varepsilon) -\boldx(\sigma)|+|\boldy(\sigma+\varepsilon) -\boldy(\sigma)|)\\ \notag\\ \notag&+o(|\boldx(\sigma+\varepsilon) -\boldx(\sigma)| +|\boldy(\sigma+\varepsilon) -\boldy(\sigma)|),
\end{align}
where $\alpha_\varepsilon$ is the angle that the line  joining $\boldx(\sigma+\varepsilon)$ and $\boldy(\sigma+\varepsilon)$ forms with the two tangent lines to $\partial K$ at those points. 
Therefore, $\sin \alpha_\varepsilon= \cos(\theta(\sigma+\varepsilon)/2)$ and, dividing \eqref{sin2} by $\varepsilon$, after passing to the limit as $\varepsilon\to 0$, we have
\begin{align}\label{sin3}
&\limsup_{\varepsilon\to 0} \dfrac{|\boldx(\sigma+\varepsilon) -\boldx(\sigma)|}{\varepsilon}\le\sup_{s\in[-\pi,\pi[}\frac{ |\boldx(s) -\boldy(s)|}{\cos(\theta(s)/2)}\le \frac L{\cos(\sqrt3/2)},\\ \notag
\\ \label{sin4}
&\limsup_{\varepsilon\to 0} \dfrac{|\boldy(\sigma+\varepsilon) -\boldy(\sigma)|}{\varepsilon}\le\sup_{s\in[-\pi,\pi[}\frac{ |\boldx(s) -\boldy(s)|}{\cos(\theta(s)/2) }\le \frac L{\cos(\sqrt3/2)}.
\end{align}
We end the proof observing that $\theta(\sigma)$ is defined as the angle between two vectors which have a lipschitz dependence on $\sigma$.

\end{proof}
\setlength\fboxsep{30pt}
\setlength\fboxrule{0.0pt}
\begin{figure}
\fbox{\includegraphics[height=75mm]{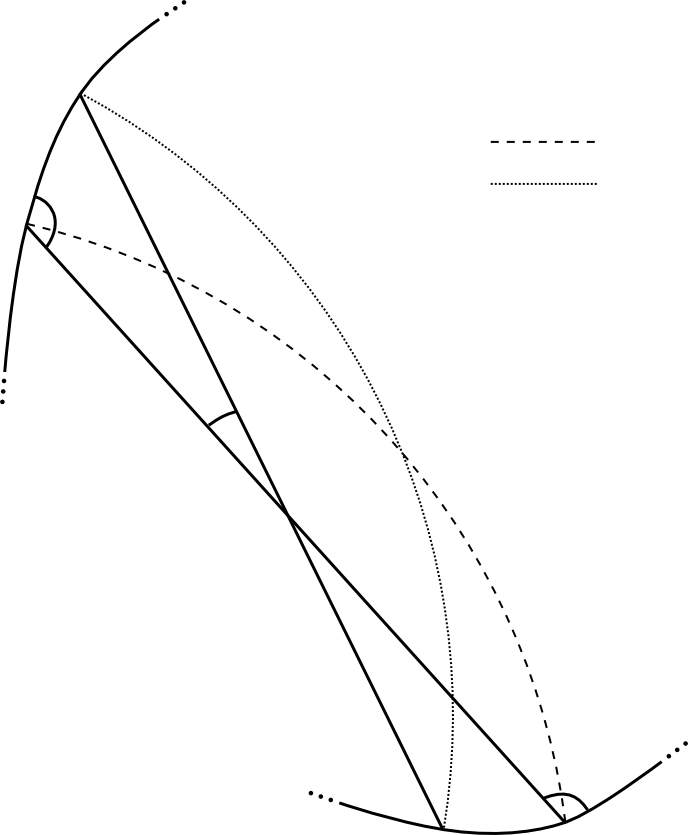}}\caption{Notation used in the proof of Lemma \ref{regular_repr}}\label{sin}
\begin{picture}(0,50)(0,0)
 \put(-38,215){$\varepsilon$}
 \put(90,190){$K$}
 \put(70,270){$\Big\}$ optimal arcs}
\put(-70,260){$\alpha_\varepsilon$}
\put(57,116){$\alpha_\varepsilon$}
\put(17,92){${\boldx}(\sigma)$}
\put(62,95){${\boldx}(\sigma+\eps)$}
\put(-128,253){${\boldy}(\sigma+\eps)$}
\put(-88,300){${\boldy}(\sigma)$}
\end{picture}
\end{figure}

We denote (see Figure \ref{CHL_fig}) by ${\boldm}(\sigma)$ the parametric representation of the intersection point of the straight lines tangent to $\Gamma_\sigma$ in ${\boldx(\sigma)}$ and ${\boldy}(\sigma)$ (if $\Gamma_\sigma$ is a straight segment ${\boldm}(\sigma)$ is its midpoint).
We have:
\begin{align}
\label{paramx}  \displaystyle{\boldx(\sigma)} &= {\boldm}(\sigma)-g(\theta(\sigma)) (-\sin(\sigma -\theta(\sigma)/2), \cos(\sigma -\theta(\sigma)/2)), \\\notag\\
\label{paramy} \displaystyle{\boldy(\sigma)} &= {\boldm}(\sigma)+g(\theta(\sigma)) (-\sin(\sigma +\theta(\sigma)/2), \cos(\sigma +\theta(\sigma)/2)),
\end{align}
where $g:]-\pi,\pi[\rightarrow\R$ is the smooth function defined by
\begin{equation}\label{fung}
g(\tau)=\left\{\begin{array}{ll}
\displaystyle\frac{L}{\tau}\tan\frac{\tau}{2}  &  \text{ if }\tau\not=0,    \\ \\
\displaystyle\frac{L}{2}  &  \text{ if }\tau=0.
\end{array}
\right.
\end{equation}

In view of Lemma \ref{regular_repr},  ${\boldx}(\sigma)$, ${\boldy}(\sigma)$,  $\theta(\sigma)$ and ${\boldm}(\sigma)$ are lipschitz functions. Therefore we may differentiate (\ref{paramx}) and (\ref{paramy}) a.e. and obtain
\begin{align}
\label{paramdx}  \displaystyle{\boldx'(\sigma)} &= {\boldm'}(\sigma)-\frac d{d\sigma}(g(\theta(\sigma))) (-\sin(\sigma -\theta(\sigma)/2), \cos(\sigma -\theta(\sigma)/2)) \\ \notag \displaystyle&\qquad+g(\theta(\sigma))\left(1-\frac{\theta'(\sigma)}{2}\right) (\cos(\sigma -\theta(\sigma)/2), \sin(\sigma -\theta(\sigma)/2)),
\end{align}
\begin{align}
\label{paramdy}  \displaystyle{\boldy'(\sigma)} &= {\boldm'}(\sigma)+\frac d{d\sigma}(g(\theta(\sigma))) (-\sin(\sigma +\theta(\sigma)/2), \cos(\sigma +\theta(\sigma)/2)) \\\notag& \displaystyle\qquad-g(\theta(\sigma))\left(1+\frac{\theta'(\sigma)}{2}\right) (\cos(\sigma +\theta(\sigma)/2), \sin(\sigma +\theta(\sigma)/2)).
\end{align}
The fact that the optimal arc touches the boundary of $K$ orthogonally implies the following conditions:
\begin{align}
\label{orthox} & \displaystyle{\boldx'(\sigma)} \cdot (-\sin(\sigma -\theta(\sigma)/2), \cos(\sigma -\theta(\sigma)/2)) =0, \\\notag\\
\label{orthoy} & \displaystyle{\boldy'(\sigma)} \cdot  (-\sin(\sigma +\theta(\sigma)/2), \cos(\sigma +\theta(\sigma)/2))=0.
\end{align}
Using \eqref{paramdx} and \eqref{paramdy}, from conditions \eqref{orthox} and \eqref{orthoy} we get
\begin{equation}
\label{orthom} {\boldm'(\sigma)} \cdot (-\sin \sigma, \cos \sigma) =0.
\end{equation}
This means that for a.e. $\sigma$ the vector  ${\boldm'(\sigma)}$ points in the direction of $(\cos \sigma, \sin \sigma)$, i.e. it can be written in the form
\begin{equation}
\label{mprime} {\boldm'(\sigma)}=M(\sigma) (\cos \sigma, \sin \sigma).
\end{equation}

Using again \eqref{paramdx} and \eqref{paramdy}, we can then write:
\begin{align*}
\notag  \displaystyle{\boldx'(\sigma)} &= \left[M(\sigma)\sin\frac{\theta(\sigma)}{2}-\frac d{d\sigma}(g(\theta(\sigma))) \right](-\sin(\sigma -\theta(\sigma)/2), \cos(\sigma -\theta(\sigma)/2)) \\\notag&  \displaystyle+\left[M(\sigma)\cos\frac{\theta(\sigma)}{2}+g(\theta(\sigma))\left(1-\frac{\theta'(\sigma)}{2}\right)\right] (\cos(\sigma -\theta(\sigma)/2), \sin(\sigma -\theta(\sigma)/2)) \\\notag\\
\notag \displaystyle{\boldy'(\sigma)} &= \left[-M(\sigma)\sin\frac{\theta(\sigma)}{2}+\frac d{d\sigma}(g(\theta(\sigma))) \right](-\sin(\sigma +\theta(\sigma)/2), \cos(\sigma +\theta(\sigma)/2)) \\\notag& \displaystyle+\left[M(\sigma)\cos\frac{\theta(\sigma)}{2}-g(\theta(\sigma))\left(1+\frac{\theta'(\sigma)}{2}\right)\right] (\cos(\sigma +\theta(\sigma)/2), \sin(\sigma +\theta(\sigma)/2)).
\end{align*}

As a consequence of \eqref{orthox} and \eqref{orthoy} we obtain the following condition which gives $M(\sigma)$ for a e. $\sigma$ in terms of $\theta(\sigma)$:
\begin{equation}\label{maincondm}
M(\sigma)=\frac{1}{\sin\frac{\theta(\sigma)}{2}}\frac d{d\sigma}(g(\theta(\sigma))),
\end{equation}
where, by continuity, $M(\sigma)=L\theta'(\sigma)/6$ whenever $\theta(\sigma)=0$.

We have now proved the following result.
\begin{proposition}\label{CHL-repr}
Let $K$ be a CHL-set. There exists a lipschitz function $\theta(\sigma)$, $\sigma\in[-\pi,\pi[$, which satisfies the following antisymmetry property
\begin{equation}\label{thetaper}
\theta(\sigma-\pi)=-\theta(\sigma),\qquad\forall\sigma\in[0,\pi[,
\end{equation}
such that,
using the notation of Figure \ref{CHL_fig}, the parametric representations ${\boldx(\sigma)}$ and ${\boldy(\sigma)}$ of the boundary of $K$ given in \eqref{paramx} and \eqref{paramy} are lipschitz and the following equalities hold true:
\begin{align}
\label{paramdm} \displaystyle{\boldm'(\sigma)}&= M(\sigma)(\cos \sigma, \sin \sigma)\\\notag\\
\label{paramdxx}  \displaystyle{\boldx'(\sigma)} &= \left[M(\sigma)\cos\frac{\theta(\sigma)}{2}+g(\theta(\sigma))\left(1-\frac{\theta'(\sigma)}{2}\right)\right] {\bf e}_-(\sigma) \\\notag\\
\label{paramdyy}  \displaystyle{\boldy'(\sigma)} &= \left[M(\sigma)\cos\frac{\theta(\sigma)}{2}-g(\theta(\sigma))\left(1+\frac{\theta'(\sigma)}{2}\right)\right] {\bf e}_+(\sigma),
\end{align}
where $M(\sigma)$ is given by \eqref{maincondm},  and where ${\bf e}_\pm(\sigma)$ denote the unit vectors
\begin{equation*}
{\bf e}_\pm(\sigma)=(\cos(\sigma \pm\theta(\sigma)/2), \sin(\sigma \pm\theta(\sigma)/2)).
\end{equation*}
\end{proposition}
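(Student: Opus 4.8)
The plan is to consolidate the constructions already carried out in this subsection, in three stages: build the parametrisation together with the angle $\theta(\sigma)$, record its Lipschitz regularity, and then differentiate almost everywhere and eliminate $\boldm'(\sigma)$.

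For the parametrisation, I would start from Lemma \ref{intersect}, by which every point of $\partial K$ is a terminal point of exactly one optimal arc, and the correspondence $\boldx\mapsto\boldy$ pairing the two endpoints is a continuous involution of $\partial K$ by Lemma \ref{succ}; combined with Lemma \ref{intersect} again, the direction $\boldmu(\boldx)=(\boldy-\boldx)/|\boldy-\boldx|$ is a continuous bijection of $\partial K$ onto $\mathcal{S}^1$, and by Proposition \ref{cianchi}\emph{(d)} the chord length $|\boldy-\boldx|$ stays bounded away from zero. Parametrising $\mathcal{S}^1$ by $\sigma\mapsto(-\sin\sigma,\cos\sigma)$, $\sigma\in[-\pi,\pi[$, one obtains well-defined endpoints $\boldx(\sigma),\boldy(\sigma)$, the opening angle $\theta(\sigma)$ (the rotation in $[-\pi,\pi[$ carrying $\boldtau(\boldx(\sigma))$ to $-\boldtau(\boldy(\sigma))$), and the vertex $\boldm(\sigma)$ of the two tangent lines. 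The antisymmetry \eqref{thetaper} is immediate from $\Gamma_\sigma=\Gamma_{\sigma-\pi}$, $\boldx(\sigma)=\boldy(\sigma-\pi)$, $\boldy(\sigma)=\boldx(\sigma-\pi)$, which interchange the two tangent vectors and hence flip the sign of the opening angle. Formulas \eqref{paramx}--\eqref{paramy} are then plane geometry: an arc of length $L$ subtending angle $\theta(\sigma)$ meets its two endpoint tangents at distance $g(\theta(\sigma))$ from $\boldm(\sigma)$, with $g$ as in \eqref{fung}, and those tangents point in the directions obtained from the chord direction $(-\sin\sigma,\cos\sigma)$ by rotation through $\mp\theta(\sigma)/2$. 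For the regularity, Lemma \ref{disk_cond} gives that $K$ is $C^{1,1}$, so $\boldtau$ is Lipschitz along $\partial K$, and Lemma \ref{regular_repr} then yields that $\sigma\mapsto\boldx(\sigma),\boldy(\sigma),\theta(\sigma)$ are Lipschitz; since $\boldm(\sigma)$ is, by \eqref{paramx}, a Lipschitz combination of $\boldx(\sigma)$ and $\theta(\sigma)$, all four maps are Lipschitz and hence differentiable for a.e.\ $\sigma\in[-\pi,\pi[$.

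It then remains to differentiate and eliminate $\boldm'$. Differentiating \eqref{paramx}--\eqref{paramy} a.e.\ produces \eqref{paramdx}--\eqref{paramdy}. The orthogonality of each optimal arc to $\partial K$ at both endpoints, from Proposition \ref{cianchi}\emph{(b)}, is exactly the pair \eqref{orthox}--\eqref{orthoy}: $\boldx'(\sigma)$ and $\boldy'(\sigma)$ are orthogonal to the respective directions $(-\sin(\sigma\mp\theta(\sigma)/2),\cos(\sigma\mp\theta(\sigma)/2))$. Projecting \eqref{paramdx} and \eqref{paramdy} onto these directions and subtracting cancels all $g$-terms and leaves \eqref{orthom}, whence $\boldm'(\sigma)=M(\sigma)(\cos\sigma,\sin\sigma)$ for a scalar function $M(\sigma)$, which is \eqref{paramdm}. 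Substituting this back into \eqref{paramdx}--\eqref{paramdy} and re-imposing \eqref{orthox} (resp.\ \eqref{orthoy}) forces the coefficient of the radial direction to vanish; this identity is precisely \eqref{maincondm}, with $M(\sigma)=L\theta'(\sigma)/6$ when $\theta(\sigma)=0$ by continuity, and the surviving tangential components are \eqref{paramdxx}--\eqref{paramdyy}.

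I expect no serious obstacle here, since the statement is in essence a bookkeeping summary of the preceding computation; the only points needing genuine care are that the differentiation in the last stage is legitimate --- that is, that the Lipschitz bounds \eqref{sin3}--\eqref{sin4} of Lemma \ref{regular_repr} (which ultimately rest on the uniform internal disc condition of Proposition \ref{attainable}) are available --- and that the algebraic elimination of $\boldm'$ is carried out on the common full-measure set on which $\boldx,\boldy,\theta$ and $\boldm$ are all differentiable.
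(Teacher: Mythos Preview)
Your proposal is correct and follows essentially the same route as the paper, which indeed presents this proposition as a summary of the preceding constructions and computations (Lemmas \ref{intersect}, \ref{succ}, \ref{disk_cond}, \ref{regular_repr} together with the differentiation of \eqref{paramx}--\eqref{paramy} and the orthogonality conditions from Proposition \ref{cianchi}\emph{(b)}). One minor slip: to derive \eqref{orthom} you should \emph{add} the two projected equations rather than subtract, since the $\tfrac{d}{d\sigma}g(\theta)$ terms carry opposite signs in \eqref{paramdx} and \eqref{paramdy}.
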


Using the above notation, from \eqref{paramx}, \eqref{paramy} and  \eqref{paramdm}  we have
\begin{align}
\label{paramxx}  \displaystyle{\boldx(\sigma)} &= \boldm(0)+\int_0^\sigma M(s)(\cos s, \sin s)\,ds-g(\theta(\sigma)) (-\sin(\sigma -\theta(\sigma)/2),\cos(\sigma -\theta(\sigma)/2)), \\\notag\\
\label{paramyy} \displaystyle{\boldy(\sigma)} &=\boldm(0)+ \int_0^\sigma M(s)(\cos s, \sin s)\,ds+g(\theta(\sigma)) (-\sin(\sigma +\theta(\sigma)/2), \cos(\sigma +\theta(\sigma)/2)).
\end{align}
By the Gauss-Green formula, it is possible to compute the measure of $K$:
\begin{align}
\label{area}
|K|=&\frac12\int_0^\pi(\boldx(\sigma)\wedge\boldx'(\sigma)+\boldy(\sigma)\wedge\boldy'(\sigma))\,d\sigma\\ \notag\\ \notag
=&\int_0^\pi\int_0^tM(t)M(\sigma)\,\sin(t-\sigma)\,d\sigma\,dt+\int_0^\pi g^2(\theta(\sigma))\,d\sigma.
\end{align}
We observe explicitly: If for some lipschitz function $\theta:[-\pi,\pi[\rightarrow[-\pi,\pi[$ satisfying \eqref{thetaper} the equations \eqref{paramxx} and \eqref{paramyy}
define the boundary of a convex set $K$, then $K$ is a CHL-set.
Indeed, by construction for every $\sigma\in[-\pi,\pi[$ there is a  circular arc $\omega_\sigma$ of length $L$ and opening angle $\theta(\sigma)$, which is orthogonal to $\partial K$ and has terminal points in $\boldx(\sigma)$ and $\boldy(\sigma)$. For every $\sigma\in[-\pi,0[$ this arc bounds two parts of $K$ denoted as  $K^{(1)}_\sigma$ and $K^{(2)}_\sigma$, whose measures are given by
\begin{align}
\label{K1}
|K^{(1)}_\sigma|=&\frac12\int_\sigma^{\pi+ \sigma}(\boldx(\tau)-\boldm(\sigma))\wedge\boldx'(\tau)\,d \tau+\frac{L^2}{2\theta(\sigma)}\left(\frac2{L}g(\theta(\sigma))-1\right)\\ \notag\\\label{K2}
|K^{(2)}_\sigma|=&\frac12\int_ \sigma^{\pi+ \sigma}(\boldy(\tau)-\boldm(\sigma))\wedge\boldy'(\tau)\,d \tau-\frac{L^2}{2\theta(\sigma)}\left(\frac2{L}g(\theta(\sigma))-1\right).
\end{align}
Here we have used the Gauss-Green formula and the observation that the measure of the set bounded by $\omega_\sigma$ and the two segments joining $\boldm(\sigma)$ to $\boldx(\sigma)$ and to $\boldy(\sigma)$ is given by the modulus of the second term on the right-hand side of \eqref{K1} and \eqref{K2}. A straightforward calculation gives
\begin{align}
\notag
|K^{(1)}_\sigma|=&\frac12\int\limits_\sigma^{\pi+\sigma}\int\limits_0^tM(t)M(\tau)\,\sin(t-\tau)\,d \tau\,dt+\frac12\int\limits_\sigma^{\pi+\sigma} g^2(\theta(\tau))\,d \tau\\ \notag\\ \notag
&
+\frac12\int\limits_\sigma^{\pi+ \sigma}(\boldm(0)-\boldm(\sigma))\wedge\boldx'(\tau)\,d \tau-\frac12\left.\left(g(\theta(\tau))\int\limits_0^\tau M(t)\cos(\tau-t-\theta(\tau)/2)\,dt\right)\right|_{\tau=\sigma}^{\tau=\pi+\sigma}\\ \notag\\ \notag
&+\int\limits_\sigma^{\pi+\sigma}\left(M(\tau)g(\theta(\tau))\cos\left(\frac{\theta(\tau)}2\right)-g^2(\theta(\tau))\frac{\theta'(\tau)}4\right)\,d\tau+\frac{L^2}{2\theta(\sigma)}\left(\frac2{L}g(\theta(\sigma))-1\right)\\ \notag\\ \notag
=&I_1+I_2+I_3+I_4+I_5+I_6.
\end{align}
In view of property \eqref{thetaper} we have that
\begin{equation*}
I_1+I_2=\frac{|K|}2.
\end{equation*}
Furthermore, using \eqref{paramdm}, we have
\begin{align}\notag
I_4=&-\frac12g(\theta(\sigma))\cos\left(\frac{\theta(\sigma)}2\right)\left[\int_\sigma^0M(\tau)\cos(\sigma-\tau)\,d\tau-\int_0^{\pi+\sigma}M(\tau)\cos(\sigma-\tau)\,d\tau\right]\\ \notag\\ \notag
&=-g(\theta(\sigma))\cos\left(\frac{\theta(\sigma)}2\right)(\boldm(0)-\boldm(\sigma))\wedge (-\sin\sigma,\cos \sigma)=-I_3.
\end{align}
Finally, it is easy to show that $I_5+I_6=0$ and it results
\begin{equation*}
|K^{(1)}_\sigma|=|K^{(2)}_\sigma|=\frac{|K|}2.
\end{equation*}

\begin{remark}\label{fig9}  A particular example of a CHL-set can be obtained if we choose
\begin{equation*}
\theta(\sigma)=\frac{\pi-\bigl|2\pi-|6\sigma-3\pi| \bigr|}3,\qquad \sigma\in[0,\pi[,
\end{equation*}
in the representation \eqref{paramx}, \eqref{paramy}. This set is a kind of rounded equilateral triangle (see Figure \ref{auerb}), which can be seen as the counterpart of the Auerbach triangle in the class of convex Zindler sets. Just for comparison, its measure is larger than the measure of the disc with the same value of $L$; using \eqref{area} its area is given by
\begin{equation*}
L^2\left(\frac9\pi-2\sqrt3\left(\frac3{2\pi}+I\right)^2\right)\simeq L^2\cdot0.7981\dots,
\end{equation*}
where
\begin{equation*}
I=\int_0^{\pi/3}\frac{\cos t}t\left(\frac1t-\frac1{\tan t}\right)\,dt\simeq 0.2949\dots
\end{equation*}
Similarly, from any regular polygon with an odd number of sides one can obtain a CHL-set. The above example also shows that in general a CHL-set is not necessarily more regular than of class $C^{1,1}$ and even though it  is of class $C^{1,1}$, the function $\theta(\sigma)$ does not need to be more regular than of class $C^{0,1}$.
\begin{figure}
\includegraphics[width=3. in]{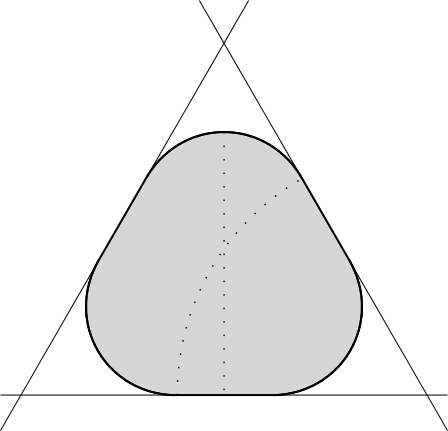}\caption{The set from Remark \ref{fig9}}\label{auerb}
\end{figure}
\end{remark}

\subsection{Maximization of \eqref{const} on CHL-sets}

In the present subsection we maximize the quantity in \eqref{const}
in the class of CHL-sets. 
\begin{proposition}\label{miniimality}
Let $K$ be a CHL-set such that the length of the optimal arcs is $L$. The measure of $K$ is minimal if and only if it is a disc of diameter $L$.
\end{proposition}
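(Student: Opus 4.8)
The plan is to minimize the area functional \eqref{area} over all admissible profiles $\theta$ and show that the minimum is attained exactly when $\theta\equiv0$. By Proposition \ref{CHL-repr} and the discussion following it, a CHL-set whose optimal arcs have length $L$ is described by a Lipschitz function $\theta$ on $[-\pi,\pi[$, with values in $]-\pi,\pi[$, satisfying the antisymmetry \eqref{thetaper} and such that the curves \eqref{paramxx}--\eqref{paramyy} bound a convex set; for such a set $|K|=A+B$, where $A$ denotes the double integral in \eqref{area} and $B=\int_0^\pi g^2(\theta(\sigma))\,d\sigma$, with $g$ as in \eqref{fung}. Since $g(\tau)\ge g(0)=L/2$, with equality only at $\tau=0$, we have $B\ge\pi L^2/4$; so if the cross term $A$ were nonnegative the result would be immediate, and the difficulty is that $A$ turns out to be nonpositive. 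I would first normalize: $\theta$ is continuous on the circle of directions and $\theta(\sigma-\pi)=-\theta(\sigma)$, hence $\theta$ vanishes somewhere, and after a rotation of $K$ (which changes neither $|K|$ nor $L$) we may assume $\theta(0)=0$.

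The key observation about $A$ is that it equals the (doubled) signed area enclosed by the curve $\sigma\mapsto\boldm(\sigma)$. Indeed $\Gamma_{\sigma-\pi}=\Gamma_\sigma$ forces $\boldm(\sigma-\pi)=\boldm(\sigma)$, so the $\boldm$-curve is closed on $[0,\pi]$, and from \eqref{paramdm} together with $\boldm(\pi)=\boldm(0)$ one gets $A=\int_0^\pi\boldm(\sigma)\wedge\boldm'(\sigma)\,d\sigma$. To compute this I would symmetrize, $A=\frac12\iint_{[0,\pi]^2}M(t)M(\sigma)\,|\sin(t-\sigma)|\,dt\,d\sigma$, and insert the Fourier series $|\sin u|=\frac2\pi-\frac4\pi\sum_{k\ge1}\frac{\cos 2ku}{4k^2-1}$, obtaining
\[
A=\frac1\pi\Bigl(\int_0^\pi M\,d\sigma\Bigr)^2-\frac2\pi\sum_{k\ge1}\frac{\bigl(\int_0^\pi M\cos 2k\sigma\,d\sigma\bigr)^2+\bigl(\int_0^\pi M\sin 2k\sigma\,d\sigma\bigr)^2}{4k^2-1}.
\]
By \eqref{maincondm}, $M=\frac{d}{d\sigma}h(\theta)$, where $h$ is a primitive of $g'/\sin(\cdot/2)$; writing $H:=h(\theta)$, the normalization $\theta(0)=0$ forces $\int_0^\pi M\,d\sigma=H(\pi)-H(0)=0$, so the first term drops and $A\le0$.

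Next, using $M=H'$ and $H(\pi)=H(0)$, integration by parts turns $\bigl(\int_0^\pi M\cos 2k\sigma\,d\sigma\bigr)^2+\bigl(\int_0^\pi M\sin 2k\sigma\,d\sigma\bigr)^2$ into $4k^2$ times the analogous expression for $H$; combining this with Parseval on $[0,\pi]$ and $\frac{4k^2}{4k^2-1}=1+\frac1{4k^2-1}$ gives
\[
A=-\int_0^\pi\bigl(H-\langle H\rangle\bigr)^2\,d\sigma-R,\qquad 0\le R\le\frac13\int_0^\pi\bigl(H-\langle H\rangle\bigr)^2\,d\sigma,
\]
where $\langle H\rangle$ is the average of $H$ on $[0,\pi]$. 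Since $\int_0^\pi(H-\langle H\rangle)^2\le\int_0^\pi(H-h(0))^2=\int_0^\pi(h(\theta)-h(0))^2$, this yields
\[
|K|-\frac{\pi L^2}{4}=A+B-\frac{\pi L^2}{4}\ \ge\ \int_0^\pi\Bigl(g^2(\theta(\sigma))-\frac{L^2}{4}-\frac43\bigl(h(\theta(\sigma))-h(0)\bigr)^2\Bigr)\,d\sigma,
\]
so the whole statement reduces to the one-variable inequality
\[
g^2(\tau)-\frac{L^2}{4}\ \ge\ \frac43\bigl(h(\tau)-h(0)\bigr)^2,\qquad |\tau|\le\sqrt3,
\]
strict for $\tau\ne0$, the range $|\tau|\le\sqrt3$ being precisely Proposition \ref{cianchi}\emph{(d)}. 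Granting this, $|K|\ge\pi L^2/4$; in the equality case the last integrand vanishes identically, forcing $\theta\equiv0$ a.e., hence $M\equiv0$, and then \eqref{paramxx} shows $\partial K$ is a circle of radius $g(0)=L/2$. Conversely the disc of diameter $L$ attains equality.

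I expect the one-variable inequality just displayed to be the main obstacle. It is very tight: at $\tau=0$ both sides vanish and the difference $\phi(\tau):=g^2(\tau)-\frac{L^2}{4}-\frac43(h(\tau)-h(0))^2$ has $\phi'(0)=0$ with only a small positive $\phi''(0)$, and it is again close to equality at $\tau=\pm\sqrt3$; moreover it genuinely needs the bound $|\tau|\le\sqrt3$, since it fails as $\tau\to\pm\pi$, where $g(\tau)$ and $h(\tau)-h(0)$ blow up at comparable rates. A rigorous proof of $\phi\ge0$ on $[0,\sqrt3]$ will require a careful one-variable calculus argument, for instance controlling the sign of $\phi''$ near the origin and the monotonicity of an appropriate quotient farther out. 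A secondary, routine point is to justify the Fourier and integration-by-parts steps for a merely Lipschitz $\theta$, which is legitimate since $H=h(\theta)$ is Lipschitz and $M=H'\in L^\infty$.
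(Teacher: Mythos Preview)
Your overall architecture matches the paper's: reduce $|K|-\pi L^2/4$ to a pointwise one--variable inequality of the form $g^2(\tau)-L^2/4\ge c\,f^2(\tau)$ on $|\tau|\le\sqrt3$, where your $h(\tau)-h(0)$ is exactly the paper's $f$ from \eqref{funf}. The difference is in the Fourier step, and it costs you in the constant $c$. The paper does not symmetrize $A$; it integrates by parts twice to reach \eqref{areaequiv} and then expands $f(\theta(\cdot))$ on $[-\pi,\pi]$. Because $f$ is odd and $\theta$ is antiperiodic \eqref{thetaper}, $f(\theta(\cdot))$ carries only \emph{odd} frequencies $(2n+1)$; and the closedness $\boldm(\pi)=\boldm(0)$ gives $\int_0^\pi M\cos\sigma=\int_0^\pi M\sin\sigma=0$, which after integration by parts kills the fundamental $n=0$. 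Only frequencies $\ge 3$ survive and one reads off $c=1+\tfrac18=\tfrac98$. Your period--$\pi$ expansion via the cosine series of $|\sin|$ discards both the antiperiodicity of $f(\theta(\cdot))$ and the genuine closedness constraints on $\boldm$ (you use only $H(\pi)=H(0)$, obtained by an ad hoc rotation). The mode $k=1$ then remains and forces $c=\tfrac43$.

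The stronger inequality you are left with, $g^2-L^2/4\ge\tfrac43 f^2$ on $|\tau|\le\sqrt3$, does appear to hold (numerically the ratio $(g^2-L^2/4)/f^2$ drops from $3/2$ at $0^+$ to about $1.39$ at $\sqrt3$, still above $4/3$), so your route can in principle be completed; but you have made the final calculus step noticeably harder than necessary, and you correctly flag it as the main obstacle without proving it. With the paper's constant $9/8$ the pointwise inequality reduces to the elementary trig estimate $\sin^2(\tau/2)+\tfrac19\tan^2(\tau/2)\sin^2(\tau/2)\le(\tau/2)^2$, which can be checked by hand. Concretely: keep your framework but feed in the two unused constraints $\int_0^\pi M\cos\sigma=\int_0^\pi M\sin\sigma=0$ --- equivalently, expand $f(\theta(\cdot))$ on $[-\pi,\pi]$ in odd harmonics and drop the first --- and you recover $c=9/8$, at which point the ``main obstacle'' dissolves.
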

\begin{proof}
We have to prove that for a given $L$ the integral in \eqref{area} attains its minimum if and only if $\theta(\sigma)=0$, for every $\sigma\in[-\pi,\pi[$. In that case $M(\sigma)\equiv 0$ and
\begin{equation}\label{minimal}
|K|=\frac\pi4 L^2.
\end{equation}
From \eqref{mprime} we immediately derive
\begin{equation}\label{orthoMM}
\int_0^\pi M(\sigma)\sin \sigma\,d \sigma =\int_0^\pi M(\sigma)\cos \sigma\,d \sigma =0.
\end{equation}
Set
\begin{equation}\label{funf}
f(\tau)=\frac{L}2\int_0^\tau\frac1{t\sin (t/2)} \left(1-\frac2t \tan\frac t2+\tan^2\frac{t}{2}\right)\,dt,
\end{equation}
and observe that by \eqref{maincondm} and \eqref{fung}
\begin{equation}
\label{funf2}
\frac d{dt}f(\theta(t))=M(t).
\end{equation}
As a consequence of \eqref{thetaper} and \eqref{orthoMM}, $f$ satisfies the following properties:
\begin{align}
\label{propf1}&f(\theta(\sigma))=-f(\theta(\sigma-\pi)),\qquad \sigma\in[0,\pi[,\\ \notag\\ 
\label{propf2}&\int_0^\pi f(\theta(\sigma))\sin\sigma\,d\sigma=\int_0^\pi f(\theta(\sigma))\cos\sigma\,d\sigma=0,\\\notag\\
\label{propf3}&\int_{-\pi}^\pi f(\theta(\sigma))\sin n\sigma\,d\sigma=\int_{-\pi}^\pi f(\theta(\sigma))\cos n\sigma\,d\sigma=0,\quad \text{ for $ n=0$ and $n$ even}.
\end{align}
By \eqref{funf2} and \eqref{propf2}, 
a double integration by parts allows us to write \eqref{area} as follows
\begin{equation}\label{areaequiv}
|K|=\int_0^\pi\int_0^tf(\theta(t))f(\theta(\sigma))\,\sin(t-\sigma)\,d\sigma\,dt-\int_0^\pi f^2(\theta(\sigma))\,d\sigma+\int_0^\pi g^2(\theta(\sigma))\,d\sigma.
\end{equation}
Using the antisymmetry property \eqref{thetaper} of $\theta(\sigma)$, one can prove that
\begin{equation}\label{inequa}
\int_0^\pi\int_0^tf(\theta(t))f(\theta(\sigma))\,\sin(t-\sigma)\,d\sigma\,dt+\frac18\int_0^\pi f^2(\theta(\sigma))\,d\sigma\ge0.
\end{equation}
Indeed, in view of \eqref{propf1}, \eqref{propf2} and \eqref{propf3} $f$ can be written as a Fourier series in $[-\pi,\pi[$
\begin{equation}\label{series}
f(\theta(\sigma))=\sum_{n=1}^{+\infty}a_n\cos (2n+1)\sigma+\sum_{n=1}^{+\infty}b_n\sin (2n+1)\sigma,
\end{equation}
so that a direct computation gives
\begin{align}\label{series1}
\int_0^tf(\theta(\sigma))\,\sin(t-\sigma)\,d\sigma=&-
\sum_{n=1}^{+\infty}\frac{a_n\cos (2n+1)t+b_n\sin (2n+1)t}{(2n+1)^2-1}\\ \notag\\
\notag&+\sum_{n=1}^{+\infty}\frac{a_n\cos t}{(2n+1)^2-1}+\sum_{n=1}^{+\infty}\frac{b_n(2n+1)\sin t}{(2n+1)^2-1}.
\end{align}
Computing the first integral in \eqref{inequa}, we have
\begin{align}\label{series2}
\int_0^\pi\int_0^tf(\theta(t))f(\theta(\sigma))\,\sin(t-\sigma)\,d\sigma\,dt&=-\frac\pi2
\sum_{n=1}^{+\infty}\frac{a_n^2+b_n^2}{(2n+1)^2-1}
\\ \notag\\
\notag&\ge-\frac\pi{16}
\sum_{n=1}^{+\infty}(a_n^2+b_n^2)=-\frac1{8}\int_0^\pi f^2(\theta(\sigma))\,d\sigma.
\end{align}
This means that \eqref{inequa} is completely proved, and from \eqref{areaequiv} we get now
\begin{equation}\label{areaequiva}
|K|\ge-\frac98 \int_0^\pi f^2(\theta(\sigma))\,d\sigma+\int_0^\pi g^2(\theta(\sigma))\,d\sigma
\end{equation}

It is not difficult to show that the functions $f$ and $g$ given in \eqref{funf} and \eqref{fung} satisfy the following inequality:
\begin{equation}\label{ineqfg}
g^2(\tau)-\frac98 f^2(\tau)\ge\frac{L^2}4, \qquad \tau\in[-\sqrt3,\sqrt3],
\end{equation}
where equality holds if and only if $\tau=0$.
Indeed, for $0\le\tau<\pi/2$, inequality \eqref{ineqfg} is an immediate consequence of the following one:
\begin{equation}\label{ineq}
\sin^2(\tau/2)+\frac19 \frac{\sin^4(\tau/2)}{\cos^2(\tau/2)}\le\left(\frac\tau2\right)^2.
\end{equation}
A simple calculation proves that \eqref{ineq} holds true for $\tau\in]0,\pi/2[$. The convexity of $f$ and $g$ in $[\pi/2,\sqrt3]$ yields inequality \eqref{ineqfg} also in this interval. A similar computation can be carried out when $\tau<0$ and then \eqref{ineqfg} is completely proved. 

Using inequality \eqref{ineqfg} and Proposition \ref{cianchi}\emph{(d)}  we conclude that
\begin{equation*}
|K|\ge\frac\pi4 L^2,
\end{equation*}
and equality holds if and only if $K$ is a disc.

\end{proof}

\section{Proof of Theorem \ref{mainauerb}}\label{sect_main10}

The proof of Theorem \ref{mainauerb} will be achieved following arguments which are very similar to those used in the proof of Theorem \ref{main}. We start with some definitions and preliminary results.

Let $K$ be an open convex set of $\R^2$. We set
\begin{equation}
\label{gonst}
\mathcal{G}(K) =  \mathop{\inf_{F\subset\R^2\text{ half-plane}}}_{0<|F\cap K| \le \frac{|K|}{2}} \mathcal{T}(F;K)
\end{equation}
where
\begin{equation}
\label{quotientc}
\mathcal{T}(F;K) =  \dfrac{Per(F\cap K;K)^2}{|F\cap K|}.
\end{equation}
In what follows we say that a half-plane $H$ is a minimizer for  \eqref{quotientc} if the minimum in \eqref{gonst} is attained 
on $H$. 

\begin{proposition}
\label{cianchic}
Let $K$ be an open convex set of $\R^2$. 
 There exists a half-plane $H$ which minimizes \eqref{quotientc} such that  $|H\cap K|=|K|/2$, and any minimizer $H$ has the following properties.
\begin{enumerate}[(a)]
\item Let $P$ be one of the terminal points of $\partial H\cap K $. Then $P$ is a regular point of $\partial K$ in the sense that $\partial K$ has a tangent at $P$.
Furthermore, the tangents to $\partial K$ at the two terminal points of $\partial H\cap K $ either bound  with $\partial H  \cap  K$ an isosceles triangle or they are orthogonal to  $\partial H  \cap  K$.
\item If $|H  \cap  K|<|K|/2$, then $H  \cap  K$ is an isosceles triangle having sides on $\partial K$.
In such a case there exists another minimizer $\hat H$ such that  $\hat H  \cap  K$ is an isosceles triangle with sides on $\partial K$, having the same vertex as $H  \cap  K$, and $|\hat H  \cap  K|=|K|/2$.
\end{enumerate}
\end{proposition}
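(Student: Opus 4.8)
The strategy is to run, for the restricted class of half-planes, the same three--step scheme that yields Proposition \ref{cianchi} in \cite{C}: compactness, regularity of the endpoints of the free boundary, and the Euler--Lagrange condition there. The essential simplification is that the free boundary $\partial H\cap K$ is now automatically a segment, so that only the analogues of ``orthogonality'' and of ``circular sector with sides on $\partial K$'' have to be identified. I would begin by noting that for a half-plane $F$ meeting $K$ one has $Per(F\cap K;K)=\ell$, the length of the chord $\partial F\cap K$, so that $\mathcal{T}(F;K)=\ell^{2}/|F\cap K|$, and parametrize admissible configurations by the two points $Q_{1},Q_{2}\in\partial K$ in which the chord meets $\partial K$. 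For existence one takes a minimizing sequence of chords, whose lengths are at most $\diam K$. If the corresponding cap areas stay bounded below, a Blaschke--type compactness argument together with the continuity of $\ell$ and of the cap area under Hausdorff convergence of the chords produces a minimizer, and replacing $F\cap K$ by $K\setminus\overline F$ when necessary one may assume its area is $\le|K|/2$. If instead the cap areas tend to $0$, the caps are thin strips or shrink to a point, and in both cases $\mathcal{T}\to+\infty$ unless they concentrate at a corner $V_{0}$ of interior angle $2\beta_{0}<\pi$, in which case $\mathcal{G}(K)=4\tan\beta_{0}$ and every small isosceles triangular cap at $V_{0}$ is already a minimizer. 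A minimizer with area exactly $|K|/2$ is then obtained as well: the family of area--bisecting chords is compact and $\mathcal{T}$ is continuous on it, so its minimum is attained, and (see the last paragraph) this minimum equals $\mathcal{G}(K)$.

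For part (a) I would first rule out corners of $\partial K$ as endpoints of the chord. If $Q_{1}$ were a corner, the chord would enter $K$ strictly inside the tangent cone at $Q_{1}$; rotating the chord about $Q_{2}$ in the two opposite senses slides $Q_{1}$ onto the two edges issuing from the corner, and a first--order computation of $\ell^{2}/|F\cap K|$ along these two one--sided variations (using $\mathcal{T}>0$) yields two incompatible inequalities, a contradiction — when $|H\cap K|=|K|/2$ one also varies the complementary half-plane, which is a minimizer too. Once $Q_{1},Q_{2}$ are regular points, let $\alpha_{i}$ be the angle that the chord makes with $\partial K$ at $Q_{i}$; moving $Q_{i}$ along $\partial K$ gives $\partial_{s_{i}}\ell=\pm\cos\alpha_{i}$ and $\partial_{s_{i}}|F\cap K|=\pm\tfrac12\ell\sin\alpha_{i}$, so stationarity forces $\mathcal{T}=4\cot\alpha_{i}$, while the rotation of the chord about its \emph{midpoint} — which preserves the cap area to first order, hence is always admissible — forces $\alpha_{1}=\alpha_{2}$. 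Finally, $\alpha_{1}=\alpha_{2}$ means the two tangent lines to $\partial K$ at $Q_{1},Q_{2}$ are symmetric with respect to the perpendicular bisector of the chord, so that with the line of the chord they bound an isosceles triangle when $\alpha_{1}\ne\pi/2$, and are both orthogonal to the chord when $\alpha_{1}=\pi/2$; this is the stated dichotomy.

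For part (b), assume a minimizer $H$ has $|H\cap K|<|K|/2$. Then the area constraint is inactive, the translation of the chord is a two--sided variation, and one gets the equality $\mathcal{T}=4\cot\alpha$ with $\alpha=\alpha_{1}=\alpha_{2}<\pi/2$. Arguing exactly as for Proposition \ref{cianchi}(c), I would show that the curved part $\partial K\cap\overline H$ cannot bend strictly anywhere — otherwise a perturbation that keeps the competitor a chord would strictly lower $\mathcal{T}$ — so $\partial K\cap\overline H$ consists of two segments, which must meet at a corner $V$ of $\partial K$; thus $H\cap K$ is a triangle with two sides on $\partial K$, and $\alpha_{1}=\alpha_{2}$ makes it isosceles. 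The required $\hat H$ is then produced by keeping $V$ and the axis of symmetry fixed and pushing the base outward until the enclosed area is $|K|/2$: along this one--parameter family $\mathcal{T}$ stays equal to $4\tan\beta_{V}$, so $\hat H$ is again a minimizer (this also closes the existence argument).

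I expect the main obstacle to lie in the two places where the half-plane constraint genuinely changes the analysis: (i) excluding corners of $\partial K$ as endpoints of the optimal chord, where the one--sided first variations have to be combined with the optimality of the complementary half-plane; and (ii) checking that the optimal isosceles cap at $V$ can always be grown up to area $|K|/2$ inside $K$ — for a triangle this is immediate, since the two edges from the sharpest vertex have length ratio $2\cos\beta_{V}<2$, but in general it requires comparing $4\tan\beta_{V}$ with the analogous quantities at the neighbouring corners of $\partial K$.
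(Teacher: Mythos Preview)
Your argument for (a) via midpoint rotation is correct and is exactly what the paper does; note however that the intermediate claim ``stationarity forces $\mathcal{T}=4\cot\alpha_i$'' is false for minimizers with $|H\cap K|=|K|/2$ (take $K$ a disc: the optimal chord is a diameter, $\alpha_i=\pi/2$, $4\cot\alpha_i=0$, yet $\mathcal{T}=8/\pi$). The translation of a single endpoint is only a one-sided variation when the area constraint is active, so you get an inequality, not an equality. This does no harm to (a), since you only use $\alpha_1=\alpha_2$ there, but you should not state it as a consequence of stationarity in the constrained case. The paper, incidentally, does not separate ``rule out corners'' from ``equal angles'': a single midpoint-rotation computation with one-sided derivatives $\alpha^\pm,\beta^\pm$ yields $\alpha^+ +\beta^-\le 0\le \alpha^-+\beta^+$, which together with convexity forces $\alpha^+=\alpha^-=-\beta^+=-\beta^-$ in one stroke.

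The genuine gap is the one you flag yourself in (b): pushing the optimal isosceles cap outward until its area reaches $|K|/2$ while keeping both legs on $\partial K$. Your proposed fix (``compare $4\tan\beta_V$ with the analogous quantities at the neighbouring corners'') is not how the paper proceeds and would be awkward to carry out. The paper avoids the issue entirely by proving, \emph{before} any first-variation analysis, a monotonicity lemma: for any half-plane $H$ with $0<|H\cap K|<|K|/2$, translating outward to $H_\delta$ gives $\mathcal{T}(H_\delta;K)\le \mathcal{T}(H;K)$, with strict inequality unless $H\cap K$ is a triangle with two sides on $\partial K$ and $H_\delta\cap K$ is a similar triangle. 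The key estimate in the non-triangular case is
\[
\mathcal{T}(H_\delta;K)\;<\;\frac{2\,Per(H_\delta\cap K;K)}{\dist(P,\partial H_\delta)}\;\le\;\frac{2\,Per(H\cap K;K)}{\dist(P,\partial H)}\;=\;\mathcal{T}(H;K),
\]
where $P$ is the apex. This lemma gives existence of an area-bisecting minimizer directly (take a minimizing sequence with $|H_n\cap K|=|K|/2$), and for (b) it says that if a minimizer has $|H\cap K|<|K|/2$ then every outward translate up to area $|K|/2$ is still a minimizer and still a similar triangle---otherwise $\mathcal{T}$ would drop strictly below $\mathcal{G}(K)$. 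So the ``growing'' step is automatic, with no comparison of corner angles needed.

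In short: replace your compactness-plus-degeneration existence argument and your Euler--Lagrange treatment of (b) by the single monotonicity lemma above; it closes the gap and shortens the proof.
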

\begin{proof} We divide the proof into three parts.

\setcounter{claim}{0}
\begin{claim}\label{chord1} The minimum in  \eqref{gonst} is achieved and there exists a minimizer $H$ such that $|H\cap K|=\dfrac{|K|}{2}$.
\end{claim}

\noindent Given a half-plane $H$, we denote by $H_\delta$ the half-plane such that $H\subset H_\delta$ and the strip $H_\delta\backslash H$ has width $\delta$. We prove that for any $H$ such that $0<|H\cap K|<|K|/2$ there exists $\delta>0$ such that 
\begin{align}\label{Hdelta}
&0<|H_\delta\cap K|\le|K|/2,\\\notag\\
\label{Hdelta1}&\mathcal{T}(H_\delta;K)\le \mathcal{T}(H;K).
\end{align}
Consider $\delta>0$ sufficiently small, such that \eqref{Hdelta} is verified. Two cases may occur:
\begin{enumerate}[a)]
\item at the terminal points of $\partial H\cap K$ one can find two supporting lines for $K$ (one for each terminal) which are parallel or meet in a point which is external to $H$;
\item the above condition is not verified.
\end{enumerate}

In case a) it is immediate to observe that $Per(H_\delta
\cap K; K)\le Per(H
\cap K; K)$, while $|H_\delta
\cap K|>|H
\cap K|$. Then  \eqref{Hdelta1} is verified as a strict inequality.

In case b) we have that there exist two supporting lines for $K$ at the terminal points of $\partial H\cap K$ which meet in a point $P$ which is internal to $H$ and we denote by $T$ the triangle bounded by such lines and $\partial H\cap K$. 
A straightforward computation gives
\begin{equation}
\label{THK}
\mathcal{T}(H_\delta;K) \le
\mathcal{T}(H;K) \left( 1+\delta Per(H\cap K;K)\left(\frac1{|T|}-\frac1{|H\cap K|}\right)+o(\delta)\right).
\end{equation}
This means that, if $H\cap K\not=T$, then for sufficiently small $\delta>0$, \eqref{Hdelta1} is satisfied. On the other hand, if $H\cap K=T$ and $H_{\delta}\cap K$ is a triangle similar to $T$, then  \eqref{Hdelta1} immediately holds, while  if $H\cap K=T$ and $H_{\delta}\cap K$ is not a triangle, then
\begin{equation}
\label{THKa}
\mathcal{T}(H_\delta;K) < \frac{2Per(H_\delta\cap K;K)}{\dist(P,\partial H_\delta)}
\le \frac{2Per(H\cap K;K)}{\dist(P,\partial H)}=
\mathcal{T}(H;K),
\end{equation}
that is, \eqref{Hdelta1} is verified also in this case.

The result just proved says that a minimizing sequence $\{H_n\}_{n\in\N}$ for the minimum problem \eqref{gonst} can be chosen in such a way that $|H_n|=|K|/2$. From usual compactness arguments  we get Claim \ref{chord1}.

\begin{claim}\label{chord2} Property (a) holds true.
\end{claim}

\noindent Let us consider a coordinate system $(\xi,\eta)$ in $\R^2$ such that $\partial H\cap K $ has length $L$ and its terminal points in $P_1=(0,-L/2)$ and $P_2=(0,L/2)$.
In a neighborhood of $P_1$ the boundary of $K$ can be described as the graph of a convex function $f(\xi)$, while in a neighborhood of $P_2$ the boundary of $K$ can be described as the graph of a concave function $g(\xi)$. Because of the convexity of $K$ there exist four real numbers $\alpha^\pm$, $\beta^\pm$ such that: 
\begin{align}\label{fgp}
f(\xi)=-\frac L2+\alpha^+\xi+o(\xi),\qquad g(\xi)=\frac L2+\beta^+\xi+o(\xi), \qquad\xi>0,\\ \notag\\
\label{fgm}
f(\xi)=-\frac L2+\alpha^-\xi+o(\xi),\qquad g(\xi)=\frac L2+\beta^-\xi+o(\xi), \qquad\xi<0,
\end{align}
where
\begin{equation}
\label{convexc}
\alpha^+\ge\alpha^-,\qquad \beta^+\le\beta^-.
\end{equation}

For $\epsilon>0$ we denote by $H^{\pm\epsilon}$ a half-plane bounded by the straight line $\eta=\pm\xi/\epsilon$ such that $|H^{\pm\epsilon}\cap K|\le |K|/2$. We have:
\begin{equation}\label{measc}
|H^{\pm\epsilon}\cap K|=\frac{|K|}2+o(\epsilon),
\end{equation}
while, for $\epsilon$ small enough, 
\begin{equation}\label{perc}
Per(H^{\pm\epsilon}\cap K;K)-Per(H\cap K;K)=\pm\epsilon\frac L2
(\alpha^\mp+\beta^\pm)+o(\epsilon).
\end{equation}
The optimality of $H$, \eqref{measc} and \eqref{perc} imply
\begin{equation}\label{optimc}
\alpha^++\beta^-\le0\le\alpha^-+\beta^+.
\end{equation}
Using \eqref{convexc} and \eqref{optimc} we have
\begin{equation*}
\alpha^+=\alpha^-=-\beta^+=-\beta^-,
\end{equation*}
which proves Claim \ref{chord2}.



\begin{claim}\label{chord4} Property (b) holds true.
\end{claim}

\noindent By the proof of Claim \ref{chord1}, it follows that $H  \cap  K$ and $H_\delta  \cap  K$, for $\delta>0$ small enough, have to be similar triangles having sides on $\partial K$ and sharing a vertex. Therefore there exists $\bar \delta>0$ such that $H_{\bar\delta}  \cap  K$ has measure $|K|/2$ and it is still a triangle similar to $H  \cap  K$ having sides on $\partial K$ and sharing a vertex. The fact that such triangles are isosceles comes from Claim \ref{chord2} and the proof of Claim \ref{chord4} is complete.

\end{proof}

\begin{definition}[Optimal chord]
If $H$ is a minimizer of \eqref{quotientc} such that $| H  \cap  K|=|K|/2$, then we say that $\partial H  \cap  K$ is an optimal chord of $K$.
\end{definition}

As for the proof of Theorem \ref{main}, in view of Proposition \ref{cianchic}, Theorem \ref{mainauerb} is a consequence of the following result.
\begin{theorem}
\label{main1auerb} If $K$ is an open convex set of $\R^2$, we have:
\begin{equation}\label{mainineqac}
\mathcal{G}(K) \le \mathcal{G}(K^{\triangle})=\frac{16}{\sqrt3(8\log3-\log^23-4)} \simeq 2.5789\dots,
\end{equation}
where $K^{\triangle}$ is the Auerbach triangle with the same measure as $K$. The inequality holds as an equality if and only if $K$ is an Auerbach triangle.
\end{theorem}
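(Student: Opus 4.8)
The plan is to mirror the three-step strategy used for Theorem \ref{main1}, replacing bisecting arcs by bisecting chords and the quotient $\mathcal{Q}$ by $\mathcal{T}$. First I would establish a chord analogue of the CHL reduction: if $K^{\triangle}$ realizes $\sup_K\mathcal{G}(K)$, then every boundary point of $K^{\triangle}$ must be a terminal point of some optimal chord. As in Proposition \ref{CHLlemma}, the argument is by contradiction --- if some boundary arc $a\subset\partial K$ carries no terminal point, one slices off a thin sliver past a supporting line near $a$; using the estimate \eqref{THK} (or rather its counterpart for cutting rather than enlarging) one checks that $\mathcal{G}$ does not decrease, and in the borderline equality case Proposition \ref{cianchic}(b) forces the optimal region to be an isosceles triangle with sides on $\partial K$, after which one cuts near its apex to get a strict increase. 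One needs the chord analogue of Lemma \ref{intersect} (two optimal chords meet in exactly one point, since a chord crossing configuration can be short-cut), which is in fact easier for chords than for arcs, together with a compactness lemma (Blaschke selection plus continuity of $\mathcal{G}$) to guarantee a maximizer exists and to control the diameter via the width bound $2w(K)^2\ge\mathcal{G}(K)|K|$.

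Second, I would derive a parametric representation of these ``constant chord length'' sets exactly as in Section \ref{sect_main}.2. For each $\sigma\in[-\pi,\pi[$ let $\boldx(\sigma),\boldy(\sigma)$ be the terminal points of the unique optimal chord with direction $(-\sin\sigma,\cos\sigma)$, and let $\theta(\sigma)$ be the (signed) angle between the supporting tangents at those two endpoints; by Proposition \ref{cianchic}(a) these tangents either meet symmetrically forming an isosceles triangle (opening angle $\theta(\sigma)$) or are orthogonal to the chord ($\theta(\sigma)=0$). Lipschitz regularity of $\boldx,\boldy,\theta$ follows as in Lemma \ref{regular_repr}. Writing $\boldm(\sigma)$ for the intersection of the two tangents and imposing the orthogonality/isosceles conditions, one gets $\boldm'(\sigma)=M(\sigma)(\cos\sigma,\sin\sigma)$ with $M$ determined by $\theta$, and parametric formulas for $\boldx,\boldy$ analogous to \eqref{paramxx}--\eqref{paramyy} but with $g$ replaced by the appropriate chord function $\tilde g(\tau)=\tfrac{L}{2}\cot(\tau/2)$ (the circumradius-type quantity for the isosceles triangle with base $L$ and apex angle $\tau$), and with $\theta$ satisfying the antisymmetry \eqref{thetaper}. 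The area of $K$ is then expressed by a Gauss-Green formula as $|K|=\iint_{0\le\sigma\le t\le\pi}M(t)M(\sigma)\sin(t-\sigma)\,d\sigma\,dt+\int_0^\pi\tilde g^2(\theta(\sigma))\,d\sigma$, exactly parallel to \eqref{area}. One also needs an a priori bound on $\theta$ (from Pal-type inequalities, here $|\theta|\le$ some explicit constant) to keep everything in a compact range.

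Third --- and this is where the Auerbach triangle enters rather than the disc --- I would minimize that area integral over admissible $\theta$ with $L$ fixed. Introducing the primitive $\tilde f$ with $\tilde f'(\theta(t))=M(t)$ (the chord analogue of \eqref{funf}), one integrates by parts twice to rewrite $|K|$ as $\iint_{0\le\sigma\le t\le\pi}\tilde f(\theta(t))\tilde f(\theta(\sigma))\sin(t-\sigma)\,d\sigma\,dt-\int_0^\pi\tilde f^2+\int_0^\pi\tilde g^2$, expands $\tilde f\circ\theta$ in the Fourier basis $\{\cos(2n+1)\sigma,\sin(2n+1)\sigma\}$ (the antisymmetry \eqref{thetaper} and the two orthogonality relations \eqref{orthoMM} kill even and first harmonics), and obtains a spectral inequality of the form $\iint\tilde f(\theta(t))\tilde f(\theta(\sigma))\sin(t-\sigma)+c_0\int\tilde f^2\ge 0$ with the sharp constant $c_0$ coming from the worst ratio $1/((2n+1)^2-1)$, i.e.\ $n=1$. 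This reduces the problem to a pointwise inequality $\tilde g^2(\tau)-(1+c_0)\tilde f^2(\tau)\ge m(\tau)$, where the right-hand side is minimized (with equality) precisely at the value $\tau=\theta_A$ corresponding to the Auerbach triangle, rather than at $\tau=0$. The main obstacle is exactly this last step: identifying the correct ``chord functions'' $\tilde f,\tilde g$, pinning down the sharp spectral constant, and verifying the resulting one-variable inequality with its equality case at the nonzero Auerbach angle --- this is genuinely harder than the disc case \eqref{ineqfg}--\eqref{ineq} because the extremizer is not the trivial $\theta\equiv 0$ but a specific piecewise-affine $\theta$ (the one making $K$ an Auerbach triangle), so one must show that constant $|\theta(\sigma)|=\theta_A$ on each of three arcs, with the sign pattern dictated by \eqref{thetaper}, is forced. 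Once that pointwise inequality and its rigidity are in hand, $\mathcal{G}(K)\le\mathcal{G}(K^{\triangle})$ with equality only for the Auerbach triangle follows by combining the three propositions, just as Theorem \ref{main1} followed from Propositions \ref{CHLlemma}, \ref{CHL-repr} and \ref{miniimality}.
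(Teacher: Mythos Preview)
Your first two steps coincide with the paper's argument. What you call a ``constant chord length'' set is precisely a convex Zindler set: through every boundary point of a convex set there passes exactly one area-bisecting chord, so if each boundary point is a terminal of an optimal chord of length $L$, then every bisecting chord has length $L$. The paper carries out exactly this reduction, using the chord analogues of Lemma \ref{intersect}, Corollary \ref{coroll} and Proposition \ref{CHLlemma} (with isosceles triangles replacing circular sectors), to show that any maximizer of $\mathcal G$ must be a convex Zindler set.

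Where you diverge is in your third step, and that is where the gap lies. The paper does \emph{not} attempt a parametric representation and a Fourier/spectral area minimization for chords; once the maximizer is known to be Zindler, it simply invokes the theorem of Fusco and Pratelli \cite{FP}, who proved Auerbach's conjecture that among convex Zindler sets with given bisecting chord length the Auerbach triangle has least area. Your plan to rerun the argument of Proposition \ref{miniimality} is a genuinely different route, but the mechanism that made that proof work breaks down here: in the disc case the extremizer has $\theta\equiv 0$, hence $f\equiv 0$, so the spectral inequality \eqref{inequa} and the pointwise inequality \eqref{ineqfg} are \emph{simultaneously} saturated in a trivial way. For the Auerbach triangle the extremal $\theta$ is nonzero, so $\tilde f\circ\theta\not\equiv 0$; saturating your spectral bound would force $\tilde f(\theta(\cdot))$ to be a pure third harmonic, and your pointwise inequality would then have to be an equality at a nonzero value of $\tau$ --- but a strict pointwise inequality with equality only at one nonzero $\tau$ cannot by itself force $\theta(\sigma)$ to take that value on three arcs with the correct sign pattern, since the antisymmetry \eqref{thetaper} already makes $\theta$ change sign. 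You flag this as ``the main obstacle'' without resolving it, and indeed resolving it is essentially the content of \cite{FP}, which uses quite different methods.
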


Before giving the proof of the above theorem it could be useful to recall the definition of the so-called Auerbach triangle (see, e.g., \cite[Appendix C]{DEGKR}). We consider the Auerbach triangle $A^\triangle $ (see Figure \ref{auerbach}) with length of the halving chords equal to 1. The boundary $\Gamma^ \triangle$ of $A^\triangle$ is $C^{1,1}$ and consists of six parts. We start by giving the parametric representation of one of these parts which in Figure \ref{auerbach} is represented by a dashed line:
\begin{equation}\label{param_auer}
\left\{
\begin{array}{l}
\displaystyle x(t)=\frac{e^{4t}-1}{e^{4t}+1}-t\\
\\
\displaystyle y(t)=2\frac{e^{2t}}{e^{4t}+1}
\end{array}
\right.
\end{equation}
with $t\in[-(\log3)/4,(\log3)/4]$. This arc of curve is clearly symmetric with respect to the $y$-axis. Moreover we have $(x(0),y(0))=(0,1)$, while at the terminal points of \eqref{param_auer} the tangent lines to the curve form an angle of $\pi/3$ . So we can construct an equilateral triangle $T$ bounded by these two tangents and the $x$-axis. 
Rotations of $2\pi/3$ around the barycenter of $T$ of the arc defined by \eqref{param_auer} provide other two pieces of boundary. The Auerbach triangle is just the convex hull of these three pieces. Clearly $A^ \triangle $ shares the flat part of its boundary with the equilateral triangle $T$, therefore sometimes goes under the name of \emph{Rounded Triangle}.  

\begin{figure}
\includegraphics[height=80mm]{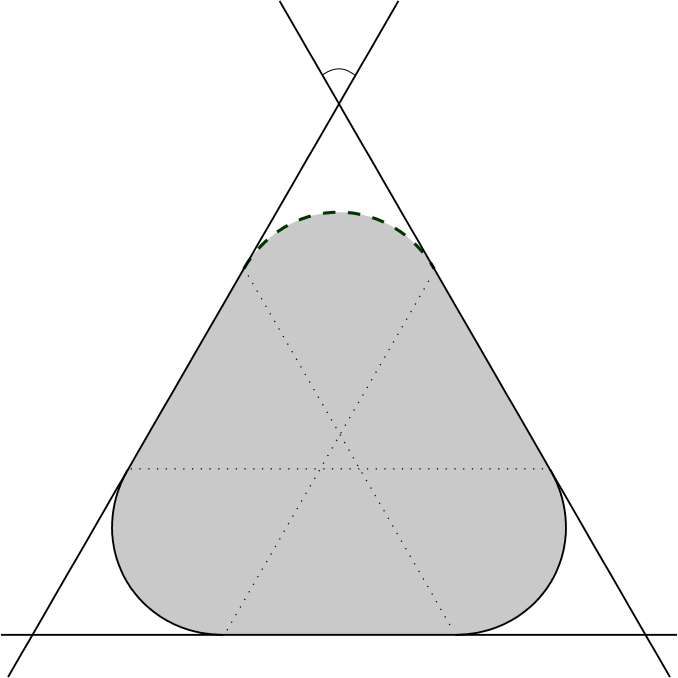}\caption{Auerbach triangle}\label{auerbach}
\begin{picture}(0,0)
\put(-33,120){$A^ \triangle$}
\put(-4,252){${\frac{\pi}{3}}$}
\end{picture}
\end{figure}

Using the described representation of $A^ \triangle $, it is possible to work out its measure
\begin{equation}\label{area_auerb}
|A^ \triangle |=\frac{\sqrt3}8(8\log3-\log^23-4)\simeq 0.7755\dots,
\end{equation}
from which the computation of $\mathcal{G}(K^{\triangle})$ in \eqref{mainineqac} easily follows.

\begin{remark}\label{remark_auerb}
As already said in the introduction, it is not difficult to show that the quantity $\mathcal{C}(A^{\triangle})$ is strictly less then $8/\pi$, even though $\mathcal{G}(A^{\triangle})$ is not. Indeed, in view of Proposition \ref{cianchi}, the optimal arc is not a straight segment and it connects the parts of $\Gamma^\triangle$ which are segments. A straightforward computation which makes use of \eqref{area_auerb} gives:
\begin{equation*}
\mathcal{C}(A^{\triangle})=\frac{8\pi}{3(8\log3-\log^23-4)} \simeq 2.3388\dots<\frac{\pi}{8}\simeq 2.5464\ldots\ .
\end{equation*}
\end{remark}

\begin{proof}[Proof of Theorem \ref{main1auerb}]
First of all  we observe that, as in Lemma \ref{max}, one can easily show that there exists a set $K^*$ such that
\begin{equation}\label{existencec}
\mathcal{G}(K^*)= \sup_K \mathcal{G}(K).
\end{equation}
We just have to prove that such a set $K^*$ belongs to the class of convex Zindler sets 
since inequality \eqref{mainineqac} holds true in such a class (see \cite{FP}).
The strategy of the proof is similar to the one outlined in Subsection \ref{sub_red} therefore we point out just few differences. Arguing with chords instead of arcs it is immediate to observe that two optimal chords cross each other in one point or they have to coincide. Corollary \ref{coroll} follows at once replacing circular sector with isosceles triangles and the counterpart of Proposition \ref{CHLlemma} consists in showing that if by contradiction $K^*$ is not a convex Zindler set then it is possible to cut off a piece of $K^*$
in such a way that $\mathcal G(\cdot)$ strictly increases.

\end{proof}

\section {Corollaries and remarks}\label{last}

In this section we collect a few more results and remarks. The first result addresses the question of best fencing for sets which are not necessarily convex. As announced in the introduction, we are able to handle general centrosymmetric sets.

\begin{proposition}\label{centrosym}
Let $K\subset\R^2$ be a measurable centrosymmetric set. Then 
\begin{equation}\label{maincentro}
\mathop{\inf_{G \subset K}}_{|G| =\frac{|K|}{2}}Per(G;K)^2\le\frac4\pi|K|.
\end{equation}
Moreover, equality holds in (\ref{maincentro}) if and only if $K$ is a disc.
\end{proposition}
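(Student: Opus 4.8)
The plan is to exploit the elementary fact that a measurable set which is centrosymmetric about a point — we may assume about the origin — is bisected by \emph{every} line through that point, and then to optimize over the direction of the line. We may assume $0<|K|<\infty$, the inequality being trivial otherwise. For $\theta\in[0,\pi)$ set $\ell_\theta=\{t(\cos\theta,\sin\theta):t\in\R\}$ and let $H^+_\theta,H^-_\theta$ be the two open half-planes bounded by $\ell_\theta$. Since centrosymmetry gives $-H^+_\theta=H^-_\theta$, one has $|K\cap H^+_\theta|=|K\cap H^-_\theta|=|K|/2$, so $G_\theta:=K\cap H^+_\theta$ is an admissible competitor. Because the essential boundary of $G_\theta$ meets the set of Lebesgue density-one points of $K$ only inside $\ell_\theta$, I would record that $Per(G_\theta;K)\le\mathcal H^1(\ell_\theta\cap K)=:L(\theta)$ for $\mathcal L^1$-a.e.\ $\theta$ (here one needs, by Fubini, that $K$ and its density-one set, which differ by an $\mathcal L^2$-null set, coincide $\mathcal H^1$-a.e.\ on $\ell_\theta$ for a.e.\ $\theta$).

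Next I would pass to ``polar'' coordinates based at the origin: the change of variables $x=t(\cos\theta,\sin\theta)$, $\theta\in[0,\pi)$, $t\in\R$, has Jacobian $|t|$, so that
\[
|K|=\int_0^\pi\!\!\int_\R \mathbf 1_K\!\left(t\cos\theta,t\sin\theta\right)|t|\,dt\,d\theta .
\]
For fixed $\theta$ the radial slice $A_\theta:=\{t\in\R:\,t(\cos\theta,\sin\theta)\in K\}$ has $\mathcal H^1(A_\theta)=L(\theta)$, and by the bathtub principle $\int_{A_\theta}|t|\,dt$ is minimized, among sets of that measure, by the centered interval $[-L(\theta)/2,L(\theta)/2]$; hence $\int_{A_\theta}|t|\,dt\ge L(\theta)^2/4$, with equality precisely when $A_\theta$ equals that interval up to an $\mathcal H^1$-null set. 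Inserting this bound gives $|K|\ge\tfrac14\int_0^\pi L(\theta)^2\,d\theta$, so the average of $L(\theta)^2$ over $[0,\pi)$ is at most $\tfrac4\pi|K|$. Combined with $Per(G_\theta;K)^2\le L(\theta)^2$ for a.e.\ $\theta$, this yields $\inf_{|G|=|K|/2}Per(G;K)^2\le\tfrac4\pi|K|$, which is \eqref{maincentro}.

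For the rigidity statement I would trace the chain of inequalities. If equality holds in \eqref{maincentro}, then $\inf_{\theta}L(\theta)^2$ equals the average $\tfrac1\pi\int_0^\pi L(\theta)^2\,d\theta=\tfrac4\pi|K|$, forcing $L(\theta)\equiv L:=2\sqrt{|K|/\pi}$ for a.e.\ $\theta$; moreover the radial estimate $\int_{A_\theta}|t|\,dt\ge L^2/4$ must be an equality for a.e.\ $\theta$, so $A_\theta=[-L/2,L/2]$ up to $\mathcal H^1$-null sets for a.e.\ $\theta$. By Fubini this means $K$ agrees, up to an $\mathcal L^2$-null set, with the disc of radius $L/2$ centered at the origin. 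Conversely, for a disc the shortest bisecting curve is a diameter, so equality holds there — alternatively this follows from Theorem \ref{main}, or directly from P\'olya's estimate \eqref{disc}.

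I expect the only real obstacles to be measure-theoretic rather than geometric: first, justifying that $Per(G_\theta;K)$ is genuinely controlled by the one-dimensional measure of the slicing segment for \emph{almost every} direction, which requires the Fubini argument comparing $K$ with its density-one set on lines; and second, in the equality discussion, turning the statement ``almost every radial slice is the centered segment of length $L$'' back into ``$K$ is a disc up to a null set''. Both are routine once set up carefully, but they are the places where the mere measurability of $K$ (as opposed to openness, as in the rest of the paper) has to be handled with some attention.
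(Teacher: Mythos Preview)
Your proposal is correct and follows essentially the same route as the paper: integrate in polar coordinates, bound the radial integral from below via rearrangement (what you call the bathtub principle is exactly the Hardy--Littlewood inequality the paper invokes), and average over directions to locate a good bisecting line. The only differences are cosmetic --- you parametrize full lines by $\theta\in[0,\pi)$, $t\in\R$ while the paper uses half-lines over $\theta\in[0,2\pi)$, $r\in[0,\infty)$ --- and you are somewhat more explicit about the measure-theoretic caveats (the Fubini step ensuring $Per(G_\theta;K)\le L(\theta)$ for a.e.\ $\theta$, and the density-set comparison), which the paper's proof simply asserts.
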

\begin{proof}
Without loss of generality we can assume that the set $K$ is centrosymmetric about 0. 
Its measure is given by
\begin{equation*}
|K|=\int_0^{2\pi}\int_0^{\infty}\chi_{_K}(r,\theta)r\,dr\,d\theta,
\end{equation*}
where $\chi_{_K}(r,\theta)$ denotes the characteristic function of $K$ in polar coordinates. Using Hardy-Littlewood inequality,
we have:
\begin{align*}
|K|&\ge\int_0^{2\pi}\int_0^{\infty}\chi^*_{_K}(r,\theta)r\,dr\,d\theta
\\ \\
&=\frac12\int_0^{2\pi}\left(\int_0^{\infty}\chi_{_K}(r,\theta)\,dr\right)^2d\theta,
\end{align*}
where $\chi^*_{_K}(r,\theta)$ denotes the decreasing rearrangement of $\chi_{_K}(r,\theta)$ with respect to the variable $r$ (see, e.g., \cite{Ka}).

It follows that there exists $\hat\theta$ such that
\begin{equation*}
\int_0^{\infty}\chi_{_K}(r, \hat\theta)\,dr\le\sqrt{\frac{|K|}\pi}.
\end{equation*}
Since $K$ is centrosymmetric, if $\widehat H$ is the half-plane bounded by a straight line passing through the origin which forms an angle $\hat \theta$ with the positive $x$-axis, we have
\begin{equation*}
Per(\widehat H;K)\le \mathcal{H}^1(\partial(\widehat H\cap K))=2\int_0^{\infty}\chi_{_K}(r,\hat\theta)\,dr\le2\sqrt{\frac{|K|}\pi},
\end{equation*}
which implies \eqref{maincentro}.

As regards the equality case in \eqref{maincentro} we simply observe that all the inequalities stated above have to hold as equalities for every $\hat\theta\in[0,2\pi]$. Via standard arguments about the equality case in Hardy-Littlewood inequality (see, e.g., \cite[Appendix C]{ALT}) we conclude the proof.

\end{proof}

We briefly discuss Corollaries \ref{corol_intr} and \ref{corol_intr1}, giving a sketch of their proofs.

\begin{proof}[Proof of Corollary \ref{corol_intr}]
Theorem  \ref{main} gives the inequality for $\alpha =1/2$. For  $\alpha>1/2$ one can use the proof of Lemma 3.1 in \cite{C} to show that
\begin{equation*}
\gamma_\alpha(K) =\gamma_{1/2}(K)\left(\frac2{|K|}\right)^{\alpha-\frac12}.
\end{equation*}

\end{proof}

\begin{proof}[Proof of Corollary \ref{corol_intr1}]
In view of the results in \cite{Ga}, \cite{GG}, Corollary \ref{corol_intr} implies \eqref{32} and \eqref{33}.
As regards \eqref{34} we observe that
\begin{equation*}
I(K)\le |K|^{1/2}\gamma_{1}(K)=\sqrt2\gamma_{1/2}(K),
\end{equation*}
and that (see \cite{C1})
\begin{equation*}
I(K^\sharp)=\sqrt2\gamma_{1/2}(K^\sharp).
\end{equation*}

\end{proof}

\begin{remark}
The Szeg\"o-Weinberger-inequality states that among domains with fixed measure the first non trivial eigenvalue 
of the (linear) Laplacian operator under Neumann boundary conditions becomes maximal for balls. Therefore inequality \eqref{33} can be interpreted as
an extension of this result to the 1-Laplacian operator in convex planar domains.
\end{remark}

\begin{remark}
In \cite{BW} planes which cut centrosymmetric $n$--dimensional bodies into two halves of equal volume are considered. If $A(\Omega)$ denotes a cut through $\Omega$ which minimizes ($n-1$)--dimensional area, it is shown that $A(\Omega)\leq A(\Omega^\sharp)$. Another generalization of P\'olya's result to higher dimensions is described in \cite{KK}. Given the volume of a centrosymmetric set $\Omega$, only a ball maximizes the length of the shortest line segments running through the center of $\Omega$. In this sense the ball has the longest shortest piercing.
\end{remark}

\vskip1.2cm
\small
\noindent
Luca Esposito, Dipartimento di Matematica e Informatica, Via Ponte Don 
Melillo, I-84084 Fisciano (SA), Italy.

\noindent email: luesposi@unisa.it

\bigskip\noindent
Vincenzo Ferone, Carlo Nitsch, Cristina Trombetti,
Dipartimento di Matematica e Applicazioni
\lq\lq R. Caccioppoli\rq\rq$ $, 
Universit\`a di Napoli \lq\lq
Federico II\rq\rq $ $, 
Complesso Universitario Monte S. Angelo, 
Via Cintia, 
I-80126 Napoli, 
Italy.

\noindent email: ferone@unina.it, c.nitsch@unina.it, cristina@unina.it

\bigskip\noindent
Bernd Kawohl, 
Mathematisches Institut, 
Universit\"at zu K\"oln, 
D-50923 K\"oln, 
Germany.

\noindent email: kawohl@mi.uni-koeln.de

\end{document}